\documentclass[12pt]{article}
\usepackage[top=2cm, bottom=2cm, left=3cm , right=3cm]{geometry} 
\usepackage[utf8]{inputenc}
\usepackage[T1]{fontenc}
\usepackage{amssymb}
\usepackage{amsmath}
\usepackage{dsfont}
\usepackage{mathtools}
\usepackage{amsfonts}
\usepackage[greek, english]{babel}
\usepackage{amsthm}
\usepackage{tikz}
\usepackage{hyperref}
\usepackage{enumitem}  
\usepackage{marginnote}

\usepackage[draft,inline,nomargin,index]{fixme}

\fxsetup{theme=color,mode=multiuser}
\FXRegisterAuthor{cj}{acj}{\color{green}CJ}
\FXRegisterAuthor{mj}{amj}{\color{red}MJ}

\usepackage{setspace}
\setstretch{1,1}


\addto\captionsfrench{%
  }

\newenvironment{cproof}{\begin{proof}[Proof of the 
		claim]}{\end{proof}}
\newtheoremstyle{capitale}    
    {\topsep}                    
    {\topsep}                    
    {\itshape}                   
    {}                           
    {\scshape}                   
    {. ---}                          
    {.5em}                       
    {} 
    
\newtheoremstyle{remark}    
    {\topsep}                    
    {\topsep}                    
    {}                   
    {}                           
    {\scshape}                   
    {. ---}                          
    {.5em}                       
    {} 



    
\theoremstyle{capitale}
\newtheorem{theorem}{Theorem}[section]
\newtheorem{lemma}[theorem]{Lemma}
\newtheorem*{claim*}{Claim}

\newtheorem*{lemme*}{Lemme}
\newtheorem{corollary}[theorem]{Corollary}
\newtheorem{definition}[theorem]{Definition}
\newtheorem*{definition*}{Definition}

\theoremstyle{remark}
\newtheorem{remark}[theorem]{Remark}
\newtheorem*{convention}{Convention}
\newtheorem{example}[theorem]{Example}
\newtheorem{question}[theorem]{Question}


\usepackage[backend=biber, style=numeric, maxnames=50]{biblatex}
\usepackage{csquotes}
\usepackage{todonotes}
\addbibresource{bib.bib}

\DefineBibliographyStrings{english}{in = {}}

\newcommand{\RR}{\mathcal{R}}

\newcommand{\PP}{\mathcal{P}}
\newcommand{\FF}{\mathcal{F}}

\newcommand{\LL}{\mathcal{L}}

\newcommand{\N}{\mathbb{N}}
\newcommand{\Z}{\mathbb{Z}}
\newcommand{\Q}{\mathbb{Q}}
\newcommand{\R}{\mathbb{R}}

\newcommand{\M}{\mathbf{M}}

\newcommand{\Sym}{\mathrm{Sym}}
\newcommand{\Struc}{\mathrm{Struc}}
\newcommand{\inv}{^{-1}}

\newcommand{\defin}[1]{\textbf{\textit{#1}}}
\newcommand{\actson}{\curvearrowright}
\newcommand{\Stab}{\mathrm{Stab}}

\newcommand{\IRS}{\mathrm{IRS}}

\newcommand{\Aut}{\mathrm{Aut}}
\newcommand{\Sub}{\mathrm{Sub}}
\newcommand{\Fix}{\mathrm{Fix}}

\newcommand{\tp}{\mathrm{tp}}



\renewcommand{\overline}{\bar}
\newcommand{\x}{\bar{x}}
\newcommand{\y}{\bar{y}}
\newcommand{\z}{\bar{z}}

\setlength{\marginparwidth}{2.5cm}

\newcommand{\Addresses}{{
			\bigskip
			\footnotesize
			
			\noindent C.~Jahel, \textsc{Institut fur Algebra, Technische Universität Dresden, GERMANY}\par\nopagebreak\noindent
			\textit{E-mail address: }\texttt{colin.jahel@tu-dresden.de}
			
			\medskip
			
			\noindent M.~Joseph, \textsc{Université Paris-Saclay, CNRS, Laboratoire de mathématiques d’Orsay, 91405, Orsay, France}\par\nopagebreak\noindent
			\textit{E-mail address: }\texttt{matthieu.joseph@universite-paris-saclay.fr}
	
			\medskip

	}}

\title{Stabilizers for ergodic actions and invariant random expansions of non-archimedean Polish groups}
\author{Colin JAHEL \& Matthieu JOSEPH}
\date{}

\begin{document}
\maketitle

\abstract{Let $G$ be a closed permutation group on a countably infinite set $\Omega$, which acts transitively but not highly transitively. If $G$ is oligomorphic, has no algebraicity and weakly eliminates imaginaries, we prove that any probability measure preserving ergodic action $G\curvearrowright (X,\mu)$ is either essentially free or essentially transitive. As this stabilizers rigidity result concerns a class of non locally compact Polish groups, our methods of proof drastically differ from that of similar results in the realm of locally compact groups. We bring the notion of dissociation from exchangeability theory in the context of stabilizers rigidity by proving that if $G\lneq\Sym(\Omega)$ is a transitive, proper, closed subgroup, which has no algebraicity and weakly eliminates imaginaries, then any dissociated probability measure preserving action of $G$ is either essentially free or essentially transitive. A key notion that we develop in our approach is that of invariant random expansions, which are $G$-invariant probability measures on the space of expansions of the canonical (model theoretic) structure associated with $G$. We also initiate the study of invariant random subgroups for Polish groups and prove that -- although the result for p.m.p.\ ergodic actions fails for the group $\Sym(\Omega)$ of all permutations of $\Omega$ -- any ergodic invariant random subgroup of $\Sym(\Omega)$ is essentially transitive.}
\\

\noindent
\textbf{MSC:}  Primary: 37A15, 22F50. Secondary: 03C15, 03C98, 60G09, 03C75.\\
\textbf{Keywords: }Measure-preserving actions, Polish groups, non-archimedian groups, exchangeability, invariant random subgroups, model theory, infinitary logic.
\tableofcontents

\section{Introduction}

Ergodic theory of group actions has a longstanding history whose foundations started nearly a century ago with the study of actions of the group of integers thanks to the notion of measure-preserving transformation, and of the group of real numbers thanks to the notion of flow. It is now a well established theory in the context of locally compact groups, especially since the groundbreaking works of Zimmer on ergodic theory for semisimple Lie groups. A principal object of study in ergodic theory is that of probability measure-preserving actions (\emph{abbrv.}\ p.m.p.\ actions). A p.m.p.\ action $G\curvearrowright (X,\mu)$ of a topological group $G$ is a Borel action $G\curvearrowright X$ on a standard Borel space $X$ (that is, the action map $(g,x)\mapsto g\cdot x$ is Borel measurable) with a $G$-invariant Borel probability measure $\mu$ on $X$, i.e.\ satisfying $\mu(g\cdot Y)=\mu(Y)$ for all measurable subset $Y\subseteq X$ and all $g\in G$. The action is ergodic if any measurable subset $Y\subseteq X$ which is $G$-invariant in the sense that $\mu(Y\triangle g\cdot Y)=0$ for all $g\in G$, satisfies $\mu(Y)\in\{0,1\}$. Classically, ergodic theory restricts to p.m.p.\ actions of locally compact (second countable) groups: firstly, any such group $G$ admits non-trivial p.m.p.\ actions, such as Gaussian actions or Poisson point processes. Moreover, orbits of p.m.p.\ actions of any such $G$ have a nice structure: they are homogeneous spaces carrying a unique invariant measure class, induced by the Haar measure on $G$. 

Striking rigidity results have been proved in the setting of ergodic theory of locally compact groups. One of the most remarkable is the Stuck-Zimmer stabilizer rigidity theorem  \cite{StuckZimmer}, which is a (far reaching) generalization of Margulis normal subgroup theorem. It asserts that \emph{any p.m.p.\ ergodic action  of a higher rank simple Lie group (e.g.\ $\mathrm{SL}_n(\R)$ for $n\geq 3$) is either \defin{essentially free} (a conull set of points have a trivial stabilizer) or \defin{essentially transitive} (one orbit has full measure)}.

In the present paper, we study ergodic theory of Polish groups, which are separable, completely metrizable, topological groups. Our principal aim is to develop a framework to obtain stabilizer rigidity results à la Stuck-Zimmer for a large class of non locally compact Polish groups. The general ergodic theory for Polish groups is nonetheless much more pathological than that of locally compact groups, as reflected by the existence of Polish groups which admit no non-trivial p.m.p.\ action \cite{GlasnerTsirelsonWeiss}, \cite{GlasnerWeissSpatial}. 

This motivates our choice to restrict our attention to a specific class of Polish groups: that of non-archimedean Polish groups. A non-archimedean Polish group is a Polish group which admits a basis at the identity consisting of open subgroups. Non-archimedean Polish group can be characterized in several ways \cite[\S~1.5]{BeckerKechris}. They are automorphism groups of countable structures, but can also be defined as closed permutation groups, that is, closed subgroups of the symmetric group $\Sym(\Omega)$ (the group of \emph{all} permutations) on a countably infinite set $\Omega$. Here $\Sym(\Omega)$, and therefore any closed permutation group, has the topology of pointwise convergence which turns it into a Polish group. With this last characterisation in mind, one notices that non-archimedean Polish groups have plenty of non-trivial p.m.p.\ actions, such as generalized Bernoulli shift actions on product probability spaces of the form $(A,\kappa)^{\Omega}$ or $(A,\kappa)^{\Omega^n}$. 

Ergodic theoretic aspects of p.m.p.\ actions of the most iconic non-archimedean Polish group, namely $\mathrm{Sym}(\Omega)$, have been studied from different contexts (exchangeability theory, model theory, etc.). In fact, $\Sym(\Omega)$ admits p.m.p.\ ergodic actions that are neither essentially free, nor essentially transitive, see Remark \ref{rem.Sinftynonessfreenonesstrans}. Such p.m.p.\ actions for $\Sym(\Omega)$ have been studied in-depth from a model theoretic perspective in \cite{AFKP}. As it turns out, there are plenty of non locally compact closed permutation groups which satisfy a stabilizer rigidity result à la Stuck-Zimmer. 

\begin{theorem}\label{thmintro.pmprigide}
    Let $G\lneq\Sym(\Omega)$ be a transitive, proper, closed subgroup. Assume that 
    \begin{itemize}
        \item (oligomorphy) the diagonal action $G\curvearrowright\Omega^n$ has only finitely many orbits for each integer $n\geq 1$, 
        \item (no algebraicity) the pointwise stabilizer $G_A\coloneqq\{g\in G\colon \forall a\in A, g(a)=a\}$ acts on $\Omega\setminus A$ without fixed point for each finite subset $A\subseteq\Omega$,
        \item (weak elimination of imaginaries) every open subgroup of $G$ contains, as a finite index subgroup, the pointwise stabilizer $G_A$ of a finite subset $A\subseteq\Omega$. 
    \end{itemize}
    Then any p.m.p.\ ergodic action $G\curvearrowright (X,\mu)$ is either essentially free or essentially transitive.
\end{theorem}

Note that by Lemma \ref{lem.oligomorphicnotloccompact} closed permutation groups which are oligomorphic groups are not locally compact. Theorem \ref{thmintro.pmprigide} applies to a large variety of non locally compact Polish groups (in fact continuum many) such as the group \mbox{$\Aut(\Q,<)$} of order-preserving bijections of $\Q$, the group $\Aut(\Q/\Z,<)$ of bijections of $\Q/\Z$ which preserve the dense cyclic order, the automorphism group of the Rado graph and many more. We refer to Example \ref{ex.DdFgroups} for a wider variety of groups covered by our theorem. 

The assumptions on the groups in Theorem \ref{thmintro.pmprigide} are rather standard. In fact, the class of closed permutation groups $G\leq\Sym(\Omega)$, which are oligomorphic, have no algebraicity and admit weak elimination of imaginaries, has been studied in various contexts, see for instance \cite{JT} or \cite{Tsankov}. 

\begin{remark}
    For locally compact groups, stabilizers rigidity results have flourished since the groundbreaking theorem of Stuck and Zimmer \cite{StuckZimmer}, see for instance \cite{BaderShalom}, \cite{Creutz},  \cite{CreutzPeterson}, \cite{HartmanTamuz} and \cite{Levit}. Despite the similarities in statement, our proof requires some markedly different methods, as the groups involved in Theorem \ref{thmintro.pmprigide} are non locally compact Polish groups. 
\end{remark}

\begin{remark}
   Closed permutation groups $G\leq\Sym(\Omega)$ have no shortage of p.m.p.\ actions. Indeed, for any standard probability space $(A,\kappa)$, the generalized Bernoulli shift $G\curvearrowright (A,\kappa)^\Omega$ defined by $(g,(a_\omega)_{\omega\in\Omega})\mapsto (a_{g\inv(\omega)})_{\omega\in\Omega}$ is a p.m.p.\ action. We will prove in Lemma \ref{lem.generalizedbernoulli} that if $G$ is proper, has no algebraicity and weakly eliminates imaginaries, this action is essentially transitive when $(A,\kappa)$ is purely atomic and essentially free otherwise. 
\end{remark}

Given a p.m.p.\ ergodic action $G\curvearrowright(X,\mu)$ and a finite subset $A\subseteq\Omega$, we denote by $\FF_A$ the $\sigma$-algebra of measurable subsets that are $G_A$-invariant. The p.m.p.\ action $G\curvearrowright(X,\mu)$ is \defin{dissociated} if for all finite disjoint subsets $A,B\subseteq\Omega$, the $\sigma$-algebras $\FF_A$ and $\FF_B$ are independent. A key result in our study is that any p.m.p.\ ergodic action of any closed permutation group $G\leq \Sym(\Omega)$ which is oligomorphic, has no algebraicity and admits weak elimination of imaginaries, is dissociated, see \cite[Thm.~3.4]{JT}. We prove the following general result, which implies Theorem \ref{thmintro.pmprigide}.

\begin{theorem}\label{thmintro.pmprigiddissociated}
    Let $G\lneq\Sym(\Omega)$ be a transitive, proper, closed subgroup. If $G$ has no algebraicity and is primitive, then any p.m.p.\ ergodic action $G\curvearrowright(X,\mu)$ which is dissociated is either essentially free or essentially transitive. 
\end{theorem}

Recall that $G\leq \Sym(\Omega)$ is primitive if there are no $G$-invariant equivalence relations on $\Omega$ apart from equality and $\Omega\times\Omega$. Equivalently, $G\leq\Sym(\Omega)$ is primitive if $G_a$ is a maximal subgroup of $G$ for all $a\in\Omega$. As the assumptions ``weak elimination of imaginaries and no algebraicity'' imply primitivity (see Corollary \ref{cor.WEI+noAlgimpliesprimitive}), Theorem \ref{thmintro.pmprigiddissociated} implies Theorem \ref{thmintro.pmprigide}. In fact, we prove a more precise result as we are able to show that the dichotomy essentially free/essentially transitive for a p.m.p.\ action $G\curvearrowright (X,\mu)$ is characterized by the existence or absence of a fixed point for the action $\Stab(x)\curvearrowright\Omega$, where $x\in X$ and  $\Stab(x)\coloneqq\{g\in G\colon g\cdot x=x\}$.

\begin{theorem}[version for p.m.p.\ actions, see Theorem \ref{thm.pmprigiddissociatedexpanded}]\label{thmintro.pmprigiddissociatedexpanded}
    Let $G\lneq \Sym(\Omega)$ be a transitive, proper, closed subgroup. If $G$ has no algebraicity and is primitive, then for any dissociated p.m.p.\ action $G\curvearrowright(X,\mu)$, the following holds:
    \begin{itemize}
        \item either $\Stab(x)\curvearrowright\Omega$ has a fixed point for $\mu$-a.e.\ $x\in X$ and in this case $G\curvearrowright(X,\mu)$ essentially free,
        \item or $\Stab(x)\curvearrowright\Omega$ has no fixed point for $\mu$-a.e.\ $x\in X$ and in this case $G\curvearrowright(X,\mu)$ is essentially transitive. 
    \end{itemize}
\end{theorem}
Theorem \ref{thmintro.pmprigiddissociated} is now a straightforward consequence of Theorem \ref{thmintro.pmprigiddissociatedexpanded}. Let us emphasize that the assumption ``$G$ is a proper subgroup of $\Sym(\Omega)$'' can be dropped in the second item of the above theorem (see Theorem \ref{thm:NoFixIsConcOnOrbitpmp}) and this will be useful to prove that any ergodic invariant random subgroup of $\Sym(\Omega)$ is concentrated on a conjugacy class. However, as we already discussed, this assumption is essential in the first item of the above theorem. We will see in Remark \ref{rem.Sinftynonessfreenonesstrans} that $\Sym(\Omega)$ admits dissociated p.m.p.\ actions $G\curvearrowright(X,\mu)$ that are not essentially free and for which $\Stab(x)\curvearrowright\Omega$ has a fixed point for $\mu$-a.e.\ $x\in X$. \\

One of the behaviors that we exhibit -- essential transitivity of p.m.p.\ actions -- can be considered as a measure theoretic equivalent to the topological property for a minimal action of having a comeager orbit. Topological dynamics of closed subgroups of $\Sym(\Omega)$ is an extensively studied topic that was kindled by Kechris, Pestov and Todorčević in \cite{KPT}. An especially important result in this area, due Ben-Yaacov, Melleray, Nguyen Van Thé, Tsankov and Zucker in \cite{BMT}, \cite{MNT} and \cite{Zucker} is the classification of groups for which every minimal action on a compact Hausdorff space admits a comeager orbit. In particular, they show a link between the existence of those comeager orbits and the metrizability of the universal minimal flow, both phenomena corresponding to the existence of a suitable expansion of the canonical structure associated with the group. This mirrors our own study, as we explore invariant random expansions of structures in order to study p.m.p.\ actions.

The point of view that we adapt in order to prove Theorem \ref{thmintro.pmprigiddissociatedexpanded} comes from model theory (basic model-theoretic notions will be discussed in Section \ref{sec.Prelim}). This is motivated by the fact that any closed permutation group $G$ is indeed (isomorphic to) the automorphism group of a countable relational structure (see \cite[\S~1.5]{BeckerKechris}), namely the canonical structure associated with $G$.  Given any countable relational language $\LL$ (which contains the canonical language $\LL_G$ associated with $G$), we denote by $\mathrm{Struc}_\LL^G$ the compact space of \textit{expansions} of $\M_G$ in the language $\LL$. These are structures whose reduct (the structure obtained by removing the relations in $\LL\setminus\LL_G$) is equal to $\M_G$. The space $\Struc_\LL^G$ carries a continuous $G$-action and we will study the $G$-invariant Borel probability measures for this action, that we call \defin{invariant random expansions} of (the canonical structure associated with) $G$. Invariant random expansions, IREs for short, of $\Sym(\N)$ were studied from the model theoretic point of view in a series of papers under different names such as invariant measures, invariant structures, or ergodic structures \cite{AFKP}, \cite{AFKwP}, \cite{AFNP}, \cite{AFPtheory}, \cite{AFP},  \cite{AFPentropy}.  We prefer here to coin the name IRE as it more accurately describes the objects, the structures we look at being explicitly expansions. Furthermore, this name is reminiscent to the acronym for invariant random subgroups (IRS), a topic that we will discuss in the context of Polish groups. Let us denote by $\mathrm{IRE}_\LL(G)$ the space of invariant random expansions of $G$ in the language $\LL$. Theorem \ref{thmintro.pmprigiddissociatedexpanded} can be translated in terms of invariant random expansions, as expressed below. This new version is in fact equivalent to Theorem \ref{thmintro.pmprigiddissociatedexpanded}, using a universality theorem due to Becker and Kechris \cite[Thm.~2.7.4]{BeckerKechris}, see Lemma \ref{lem.dictionnary} for a variation of the latter result that is adapted to our context.

\addtocounter{theorem}{-1}

\begin{theorem}[version for IREs]\label{thmintro.IRErigide}
    Let $G\lneq\Sym(\Omega)$ be a transitive, proper, closed subgroup. Let $\LL$ be a countable relational language which contains the canonical language $\LL_G$. If $G$ has no algebraicity and is primitive, then for any $\mu\in\mathrm{IRE}_\LL(G)$ dissociated, the following holds:
    \begin{itemize}
        \item either $\Aut(\M)\curvearrowright\Omega$ has a fixed point for $\mu$-a.e.\ $\M\in\Struc_\LL^G$ and in this case $G\curvearrowright(\Struc_\LL^G,\mu)$ is essentially free,
        \item or $\Aut(\M)\curvearrowright\Omega$ has no fixed point for $\mu$-a.e.\ $\M\in\Struc_\LL^G$ and in this case $G\curvearrowright(\Struc_\LL^G,\mu)$ is essentially transitive. 
    \end{itemize}
\end{theorem}
As we already discussed before, the assumption that ``$G$ is a proper subgroup of $\Sym(\Omega)$'' ca be dropped in the second item but is essential in the first one. \\

Another part of our work concerns subgroup dynamics for Polish groups and more precisely for closed permutation groups equipped with the pointwise convergence topology. \defin{Subgroup dynamics} is the study for a topological group $G$ of its action by conjugation on the set $\Sub(G)$ of its \emph{closed} subgroups. It turns out that subgroup dynamics is a very active area of research in the locally compact realm. In the presence of a Polish locally compact group $G$, the space of closed subgroups $\Sub(G)$ is endowed with a natural topology, called the Chabauty topology, which turns $\Sub(G)$ into a compact Hausdorff space. In this setting, subgroup dynamics turned out to be very fruitful in various contexts of group theory such as the study of lattices in Lie groups \cite{ABBGNRS}, $\mathrm{C}^*$-simplicity \cite{Kennedy}, or else permutation stability of groups \cite{BeckerLubotzkyThom}. Yet the picture is not quite as rosy in the Polish realm. If $G$ is a non locally compact Polish group, then the Chabauty topology on the space $\Sub(G)$ of closed subgroups is not Hausdorff in general (and this is indeed not the case when $G=\Sym(\Omega)$). However, there is a natural $\sigma$-algebra on $\Sub(G)$ called the \defin{Effros $\sigma$-algebra} that turns $\Sub(G)$ into a standard Borel space. This allows to define IRSs for Polish groups. 

An \defin{invariant random subgroup} (IRS) of a Polish group $G$ is a probability measure on (the Effros $\sigma$-algebra of) $\Sub(G)$ which is invariant by conjugation. We denote by
\[\mathrm{IRS}(G)\coloneqq \mathrm{Prob}(\Sub(G))^G\] the (standard Borel) space of invariant random subgroups of $G$. The theory of invariant random subgroups as well as the spaces $\mathrm{IRS}(G)$ in the setting of Polish \emph{locally compact} groups $G$ have been very recently extensively studied on their own and the literature in this area is rapidly growing, see e.g.\ \cite{AbertGlasnerVirag}, \cite{BaderDuchesneLecureuxWesolek}, \cite{Bowen}, \cite{BowenGrigorchukKravchenko}. On the contrary, invariant random subgroups of non locally compact Polish groups have not been studied so far to the best of our knowledge. 

One of the foundational results for IRSs of locally compact groups is that every $\mu\in\IRS(G)$ is obtained as the stabilizer IRS of a p.m.p.\ action $G\curvearrowright(X,\mu)$, i.e., the stabilizer of a $\mu$-random point \cite[Thm.~2.6]{ABBGNRS} (see also \cite[Prop.~12]{AbertGlasnerVirag} for a proof when $G$ is countable). We prove a similar statement for closed permutation groups.

\begin{theorem}[see Theorem \ref{thm.IRSrealised}]
    Let $G\leq\Sym(\Omega)$ be a closed subgroup and let $\nu\in\mathrm{IRS}(G)$. Then there exists a p.m.p.\ action $G\curvearrowright (X,\mu)$ whose stabilizer IRS is equal to $\nu$.
\end{theorem}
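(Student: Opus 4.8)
The plan is to mimic the locally compact proof of Abért–Bergeron–Biringer–Gelander–Nikolov–Raimbault–Samet, adapting it to the non-archimedean Polish setting where the Chabauty topology fails but the Effros $\sigma$-algebra still provides a standard Borel structure. The natural candidate for the p.m.p.\ action realizing $\nu$ is the \emph{coset space construction}: given a closed subgroup $H\leq G$, the quotient $G/H$ carries a natural Borel structure and a $G$-invariant probability measure only when $H$ is \emph{open} (or of ``finite covolume'' in some sense), which will not hold in general. So instead I would form the space $\mathrm{Cos}(G)$ of \emph{cosets} of closed subgroups, i.e.\ pairs $(H,gH)$ with $H\in\Sub(G)$, equipped with the $G$-action $g'\cdot(H,gH)=(H, g'gH)$ and the projection $\pi\colon\mathrm{Cos}(G)\to\Sub(G)$, $(H,gH)\mapsto H$, which is $G$-equivariant for the conjugation action on the base. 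The key point is that this bundle of coset spaces, though not admitting a fiberwise invariant measure for each individual fiber, admits a \emph{global} invariant measure once one integrates against $\nu$ on the base, because $\nu$ is conjugation-invariant: concretely, I would build $\mu$ on $X\coloneqq\mathrm{Cos}(G)$ by disintegration, $\mu=\int_{\Sub(G)}\mu_H\,d\nu(H)$, where $\mu_H$ is supported on the fiber $\pi^{-1}(H)\cong G/H$. The subtlety is that $\mu_H$ cannot be required to be $H$-invariant (it isn't, since $G/H$ need not carry an invariant measure), so one instead fixes a single distinguished section — the identity coset — and uses the conjugation-invariance of $\nu$ to absorb the failure of fiberwise invariance into the base.

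A cleaner way to organize this, which I expect to be the actual route, is via the orbit equivalence relation / groupoid picture: one wants a p.m.p.\ action such that the stabilizer map pushes $\mu$ forward to $\nu$. I would first handle the case where $G$ is a countable group — here the construction is exactly that of Abért–Glasner–Virág's Proposition 12: take $X = \{(H, gH) : H \in \Sub(G), g \in G\}$, and define $\mu$ by first sampling $H\sim\nu$ and then sampling $gH$ according to a conjugation-quasi-invariant measure on $G/H$ whose Radon–Nikodym cocycle is controlled; ergodicity of $\nu$ translates to ergodicity of $\mu$ and the stabilizer of $(H,gH)$ is $gHg^{-1}$, so the stabilizer IRS is $\nu$. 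For general closed $G\leq\Sym(\Omega)$ the obstacle is that $G$ is uncountable, so $G/H$ is a genuine Polish (or at least standard Borel) space rather than a countable set, and one must check that: (i) the map $G\times\Sub(G)\to\Sub(G)$ and the coset-space bundle are Borel, so that $\mathrm{Cos}(G)$ is a standard Borel $G$-space — here I would invoke the measurable-selection machinery for the Effros Borel structure (Kuratowski–Ryll-Nardzewski, and the Becker–Kechris analysis of $\Sub(G)$ already alluded to in the excerpt); (ii) there is a Borel assignment $H\mapsto \mu_H\in\mathrm{Prob}(G/H)$ that is $G$-quasi-invariant with a Borel Radon–Nikodym cocycle, and such that the resulting $\mu=\int\mu_H\,d\nu(H)$ is genuinely $G$-\emph{invariant} (not merely quasi-invariant) — this is where the conjugation-invariance of $\nu$ is used crucially, via a Fubini computation showing the cocycle integrates away.

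For step (ii) the trick is standard in the locally compact world (choose a rho-function / use a Bruhat section) but here there is no Haar measure on $G$ — however $G$ is a \emph{Polish} group, hence admits a left-invariant metric and, more to the point, for a closed permutation group one can use the \emph{open} subgroups $G_A$ (pointwise stabilizers of finite $A\subseteq\Omega$) as a neighborhood basis and approximate. Concretely I would build a Borel probability measure $m$ on $G$ that is quasi-invariant under left translation with a Borel cocycle (any Polish group has one, e.g.\ push forward a Gaussian-type measure along a Borel bijection $G\cong\R^\N$ or, more canonically, use the fact that $G$ embeds as a closed subgroup of $\Sym(\N)=S_\infty$ and restrict a suitable measure), then push $m$ to $G/H$. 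The main obstacle, and the step I would spend the most care on, is verifying that the disintegrated measure $\mu=\int\mu_H\,d\nu(H)$ on $\mathrm{Cos}(G)$ is \emph{exactly} $G$-invariant and that its stabilizer IRS is \emph{exactly} $\nu$: the first requires the Fubini/cocycle cancellation using conjugation-invariance of $\nu$ and must be done carefully because one is integrating an unbounded Radon–Nikodym derivative over an infinite-measure-like situation; the second requires knowing that for $\mu$-a.e.\ point $(H,gH)$ the stabilizer in $G$ of $gH\in G/H$ under left translation is precisely the conjugate $gHg^{-1}$, which is elementary once the Borel structure on $G/H$ is pinned down, plus the observation that the pushforward of $\mu$ under $(H,gH)\mapsto gHg^{-1}$ equals $\nu$ by conjugation-invariance again. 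I would also remark that, unlike the p.m.p.\ rigidity theorems earlier in the paper, no oligomorphy or elimination-of-imaginaries hypothesis is needed here — the statement is for all closed $G\leq\Sym(\Omega)$ — which is consistent with the construction being purely Borel/measure-theoretic and not using any model-theoretic input.
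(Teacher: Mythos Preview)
Your approach has a genuine gap at the central step, and the paper takes a completely different route that bypasses the obstacle you identify.

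First, the inconsistency: you declare the projection $\pi\colon\mathrm{Cos}(G)\to\Sub(G)$ to be $G$-equivariant for conjugation on the base, but then define the action by $g'\cdot(H,gH)=(H,g'gH)$, which fixes $H$. With this action the fibres $G/H$ are $G$-invariant, so invariance of $\mu=\int\mu_H\,d\nu(H)$ would force each $\mu_H$ to be a $G$-invariant probability on $G/H$, which does not exist in general. If instead you use the action under which $H$ moves by conjugation (so that the stabilizer map is genuinely equivariant), you need a Borel equivariant assignment $H\mapsto\mu_H$ with $(g')_*\mu_H=\mu_{g'Hg'^{-1}}$. Your proposed fix---push forward a left-quasi-invariant probability $m$ on $G$ to $G/H$ and hope the Radon--Nikodym cocycle ``integrates away'' against $\nu$---is not justified: conjugation-invariance of $\nu$ gives you nothing about the \emph{left}-translation cocycle of $m$, and there is no reason the Fubini computation should produce exact invariance rather than mere quasi-invariance. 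Even in the locally compact case the ABBGNRS argument relies on Haar measure in an essential way, not merely on some quasi-invariant substitute.

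The paper avoids the coset bundle entirely and exploits the permutation-group structure directly. It takes $X=\Sub(G)\times[0,1]^{\N^{<\omega}}$, and for each $H\in\Sub(G)$ defines $\lambda^H\in\mathrm{Prob}([0,1]^{\N^{<\omega}})$ to be the law of an i.i.d.\ uniform colouring of the $H$-orbits on $\N^{<\omega}$ (constant on each orbit, independent uniform across orbits). Uniqueness of this description gives $g_*\lambda^H=\lambda^{gHg^{-1}}$ immediately, so $\mu=\int\lambda^H\,d\nu(H)$ is $G$-invariant for the diagonal action with no cocycle bookkeeping at all. The stabilizer computation then reduces to a short claim: any $g\in N_G(H)$ that preserves every $H$-orbit on $\N^{<\omega}$ must lie in $H$, proved by a one-line closure argument using that $G\leq S_\infty$ is closed in the pointwise topology. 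This is where the non-archimedean hypothesis is actually used, and it replaces the Haar-measure machinery you were reaching for.
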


We say that $\nu\in\IRS(G)$ is \defin{concentrated on a conjugacy class} if there exists an orbit $O$ of the $G$-action by conjugation on $\Sub(G)$ such that $\nu(O)=1$. Theorem \ref{thmintro.pmprigide} readily implies that if $G\lneq\Sym(\Omega)$ is a transitive, proper, closed subgroup, which is oligomorphic, has no algebraicity and weakly eliminates imaginaries, then any ergodic $\nu\in\IRS(G)$ is concentrated on a conjugacy class. Even though Theorem \ref{thmintro.pmprigide} is false for $\Sym(\Omega)$, we prove that any ergodic IRS of $\Sym(\Omega)$ is concentrated on a conjugacy class, therefore obtaining the following result. 

\begin{theorem}[see Theorem \ref{thm.IRSrigidity}]
    Let $G\leq\Sym(\Omega)$ be a transitive closed subgroup. If $G$ is oligomorphic, has no algebraicity and weakly eliminates imaginaries, then any ergodic $\nu\in\IRS(G)$ is concentrated on a conjugacy class.
\end{theorem}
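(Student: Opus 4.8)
The plan is to realize $\nu$ as the stabilizer IRS of an \emph{ergodic} p.m.p.\ action and then split according to whether $G$ is proper. By Theorem~\ref{thm.IRSrealised}, $\nu$ is the stabilizer IRS of some p.m.p.\ action $G\curvearrowright(X,\mu)$. Since the stabilizer of a point depends only on the action, disintegrating $\mu$ into its ergodic components exhibits $\nu$ as the barycenter of the stabilizer IRSs of those components; as an ergodic invariant measure is an extreme point of the simplex of invariant measures, $\nu$ equals the stabilizer IRS of $\mu$-almost every ergodic component, so we may assume $\mu$ is ergodic. It then suffices to show that the conjugacy class of $\Stab(x)\le\Sym(\Omega)$ is the same for $\mu$-a.e.\ $x$.

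Assume first that $G\lneq\Sym(\Omega)$ is proper. Then $G$ is dynamically de Finetti by Theorem~\ref{thm.JTimpliesDdF}, so Theorem~\ref{thm.essfreeoresstrans} applies: $G\curvearrowright(X,\mu)$ is essentially free or essentially transitive. In the first case $\Stab(x)=\{1\}$ for $\mu$-a.e.\ $x$ and $\nu=\delta_{\{1\}}$, trivially concentrated on the conjugacy class of the trivial subgroup. In the second case, fix $x_0$ with conull orbit $O=G\cdot x_0$; for $\mu$-a.e.\ $x$ one has $x=g\cdot x_0$ with $g\in G$, whence $\Stab(x)=g\,\Stab(x_0)\,g\inv$, so $\nu$ is concentrated on the conjugacy class of the closed subgroup $\Stab(x_0)$. (Alternatively, push $\nu$ forward through a $G$-equivariant Borel embedding $\Phi\colon\Sub(G)\hookrightarrow\Struc_\LL^G$ into a suitably rich $\LL$, provided by Becker--Kechris universality: injectivity and equivariance of $\Phi$ force $\Aut(\Phi(H))=N_G(H)$, and applying Theorem~\ref{thmintro.IRErigide} to the ergodic expansion $\Phi_*\nu$, the case ``$\Aut=\{1\}$ a.s.'' yields $N_G(H)=\{1\}$ hence $H=\{1\}$ $\nu$-a.s., while the case ``concentrated on a $G$-orbit $O$'' yields that $\Phi^{-1}(O)$ is a single conjugacy class of full $\nu$-mass.)

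The case $G=\Sym(\Omega)$ is the delicate one, since Theorem~\ref{thmintro.pmprigide} genuinely fails there (Remark~\ref{rem.Sinftynonessfreenonesstrans}): a p.m.p.\ ergodic action of $\Sym(\Omega)$ may be neither essentially free nor essentially transitive even though its stabilizer IRS is concentrated on a conjugacy class, so one must analyze $\Stab(x)$ directly. Here I would use that a closed subgroup $H\le\Sym(\Omega)$ is determined by its orbits on all the $\Omega^n$, and that two closed subgroups are conjugate in $\Sym(\Omega)$ precisely when their orbit structures (the orbit partitions of the $\Omega^n$ together with the projection maps between levels) are abstractly isomorphic, via a back-and-forth argument. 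For each $n$, the isomorphism type of the orbit structure of $\Stab(x)$ on $\Omega^n$ is a $\Sym(\Omega)$-invariant measurable function of $x$ (the conjugation is absorbed by the equivariant map $x\mapsto\Stab(x)$); provided it takes values in a standard Borel set of ``tame'' combinatorial types, ergodicity makes it $\mu$-a.s.\ constant, and a second back-and-forth using coherence across all levels then gives a single conjugacy class for $\mu$-a.e.\ $\Stab(x)$. The inputs that pin down these level-$n$ types are the de Finetti / exchangeability representation theorems for $\Sym(\Omega)$ (Kingman's paintbox at level $1$, Aldous--Hoover-type results for exchangeable arrays at higher levels), equivalently a $0$--$1$ law for $\mathcal{L}_{\omega_1\omega}$-sentences of the canonical structure of $\Stab(x)$ under $\mu$, together with the fact that only countably many Scott sentences are relevant.

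The main obstacle is this $\Sym(\Omega)$ case, and within it the tameness of the stabilizers: a priori the isomorphism type of the orbit structure of a closed subgroup is a non-smooth invariant --- isomorphism of countable structures is not Borel reducible to equality --- so ergodicity alone is useless, and one genuinely has to show that the stabilizers occurring in an ergodic p.m.p.\ action of $\Sym(\Omega)$ have orbit structures ranging over a standard Borel set. This is where the model-theoretic and infinitary-logic machinery of the paper and the de Finetti property of $\Sym(\Omega)$ (itself dynamically de Finetti by Theorem~\ref{thm.JTimpliesDdF}) do the real work. The most economical packaging would be to establish the invariant-random-expansion dichotomy of Theorem~\ref{thmintro.IRErigide} directly for $G=\Sym(\Omega)$ and then run the pushforward argument of the proper case verbatim.
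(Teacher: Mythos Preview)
Your treatment of the proper case is correct and matches the paper. The reduction to an ergodic realizing action and the application of Theorem~\ref{thm.essfreeoresstrans} is exactly what is done.

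For $G=\Sym(\Omega)$ there is a genuine gap. Your ``most economical packaging'' --- establish Theorem~\ref{thmintro.IRErigide} for $\Sym(\Omega)$ and rerun the pushforward argument --- cannot work: Theorem~\ref{thmintro.IRErigide} is \emph{false} for $\Sym(\Omega)$ (Remark~\ref{rem.Sinftynonessfreenonesstrans} gives ergodic IREs that are neither essentially free nor concentrated on an orbit). Your first approach via abstract isomorphism types of orbit structures is, as you yourself note, obstructed by non-smoothness, and you do not actually produce the ``tameness'' input.

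What the paper does instead is sharper and avoids both issues. It uses not the full dichotomy of Theorem~\ref{thmintro.IRErigide} but the underlying ingredient, Theorem~\ref{thm:NoFixIsConcOnOrbit}: for any dynamically de Finetti group (including $\Sym(\Omega)$), an ergodic IRE \emph{with no fixed point} is concentrated on an orbit. One pushes $\nu$ forward through the explicit map $H\mapsto\M_G(H)$ of Section~\ref{sec.IRStoIRM} (encoding the orbit equivalence relations of $H$ on all $\N^n$), which is Borel, injective, $G$-equivariant, and satisfies $\Aut(\M_G(H))=N_G(H)$. The crucial observation, specific to $\Sym(\Omega)$, is that this particular IRE automatically has no fixed point: if $x,y\in\Fix(H)$ are distinct then the transposition $(x\ y)$ normalizes $H$, so $N_{\Sym(\Omega)}(H)$ acts transitively on $\Fix(H)$ and hence $\Fix(N_{\Sym(\Omega)}(H))=\emptyset$ (using Lemma~\ref{lem.fixedptinfinite} to exclude $\lvert\Fix(H)\rvert=1$). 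Theorem~\ref{thm:NoFixIsConcOnOrbit} then gives that $\mu$ is concentrated on an orbit, and injectivity plus equivariance of $H\mapsto\M_G(H)$ translates this back to a single conjugacy class for $\nu$. The transposition trick is precisely what is unavailable for proper $G$ (Lemma~\ref{lem.transposition}), which is why the two cases genuinely require different arguments.
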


\begin{convention} 
In this paper, all countable relational structures as well as all closed permutation groups are defined on $\Omega=\N$. We denote by $S_\infty$ the group $\mathrm{Sym}(\N)$ of all permutations on $\N$. Tuples will be denoted by $\x,\y,\dots$ and $\N^{<\omega}$ will denote the set of tuples on $\N$.   
\end{convention}

\paragraph{Acknowledgments.} We would like to thank Gianluca Basso, Ronnie Chen, Clinton Conley, David Evans, François le Maître, Todor Tsankov and Anush Tserunyan for fruitful discussions related to this work. We also thank Nate Ackerman, Cameron Freer and Rehana Patel for comments on a draft of this paper. C.J. was partially funded by the Deutsche Forschungsgemeinschaft (DFG, German Research Foundation) – project number 467967530. M.J.\ was partially supported by a public grant as part of the Investissement d'avenir project, reference ANR-11-LABX-0056-LMH, LabEx LMH.

\section{Dissociated p.m.p.\ actions and oligomorphic groups}\label{sec.deFinetti}

We start by the straightforward following folklore result which shows that the groups under considerations in this paper are non locally compact Polish groups.

\begin{lemma}\label{lem.oligomorphicnotloccompact}
    Let $G\leq S_\infty$ be a closed subgroup. If $G$ is either oligomorphic or has no algebraicity, then $G$ is not locally compact.
\end{lemma}
\begin{proof}
    It suffices to prove that for all $A\subseteq\N$ finite, $G_A$ is not compact. Take $x\notin A$ with infinite orbit under $G_A$, which exists either by oligomorphy or since $G$ has no algebraicity. Let $(x_i)$ be an enumeration of the $G_A$-orbit of $x$. The open sets $G_A\cap \{g\in G \colon g(x)=x_i\}$ form an infinite open cover by disjoint sets of $G_A$, it is therefore not compact.
\end{proof}
For a p.m.p.\ action $G\curvearrowright (X,\mu)$ of a closed subgroup $G\leq S_\infty$ and a finite subset $A\subseteq\N$, we denote by $\FF_A$ the $\sigma$-algebra of $(X,\mu)$ generated by the $G_{A}$-invariant measurable subsets of $X$, i.e., measurable subsets $Y\subseteq X$ such that $\mu(Y\triangle g\cdot Y)=0$ for all $g\in G_A$. 

\begin{definition}
    A p.m.p.\ action $G\curvearrowright(X,\mu)$ of a closed subgroup $G\leq S_\infty$ is \defin{dissociated} if for all finite disjoint subsets $A,B\subseteq\N$, the $\sigma$-algebras $\FF_A$ and $\FF_B$ are independent.  
\end{definition}

Notice that dissociation implies ergodicity by taking $A=B=\emptyset$. Dissociation plays a key role in the theory of exchangeable arrays, see for instance \cite[Chap.~7]{Kallenberg}. In the context of p.m.p.\ actions of infinite permutation groups, Jahel and Tsankov proved dissociation for all p.m.p.\ ergodic actions of the class of groups under consideration in this paper:

\begin{theorem}[{\cite[Thm.~3.4]{JT}}]\label{thm.JTimpliesDdF} Let $G\leq S_\infty$ be a closed subgroup. Assume that $G$ is oligomorphic, has no algebraicity and admits weak elimination of imaginaries. Then for any p.m.p.\ action $G\curvearrowright(X,\mu)$, the following are equivalent:
\begin{itemize}
    \item $\mu$ is ergodic,
    \item $\mu$ is dissociated. 
\end{itemize}
\end{theorem}

In an upcoming work, we will prove that such a result holds for other groups (such as the automorphism group of the universal rational Urysohn space $\mathbb{U}_\Q$) that lies outside of the scope of oligomorphic groups. 

\begin{remark}
    A variant of Theorem \ref{thm.JTimpliesDdF} in the context of $S_\infty$-exchangeable arrays appeared before in the literature, see for instance \cite[Lem.~7.35]{Kallenberg}.
\end{remark}

Let us discuss examples of oligomorphic groups that have no algebraicity and admits weak elimination of imaginaries.

\begin{example}\label{ex.DdFgroups}
   All the examples we will present are obtained as automorphism groups of Fraïssé limits. A Fraïssé limit $\mathbb{F}$ is an ultrahomogeneous structure uniquely defined (up to isomorphism) by a Fraïssé class of finite structures. We refer the reader to \cite[\S~7.1]{Hodge} for more details on Fraïssé limits. Here is a non-exhaustive list of some Fraïssé classes whose automorphism groups is oligomorphic, has no algebraicity and admits weak elimination of imaginaries.
    \begin{enumerate}[label=(\roman*)]
        \item\label{item.first} The class of finite sets. Its Fraïssé limit is $\N$ and its automorphism group is $S_\infty$.
        \item The class of finite linear orders. Its Fraïssé limit is $(\Q,<)$ and its automorphism group is the group $\Aut(\Q,<)$ of order-preserving permutations of $\Q$. 
        \item The class of finite cyclic orders. Its Fraïssé limit is $(\Q/\Z,<)$ and its automorphism group is the group $\Aut(\Q/\Z,<)$ of bijections of $\Q/\Z$ which preserves the dense cyclic order. 
        \item\label{item.poset} The class of partially ordered finite sets. 
        
        \item\label{item.Rado} The class of finite simple graphs. Its Fraïssé limit is the Rado graph $R$. 
        \item The class of $K_n$-free finite simple graphs for some $n\geq 3$. 
        \item The class of $k$-uniform finite hypergraphs for some $k\geq 2$.
        \item The class of $k$-uniform finite hypergraphs omitting a complete $k$-uniform hypergraph for some $k\geq 2$. 
        \item  The class of finite tournaments. A tournament is a directed graph where there is an oriented edge between any two distinct vertices. 
        \item\label{item.Henson} The class of directed finite graphs omitting a (possibly infinite) set of finite tournaments. Henson observed \cite{Henson} that there are continuum many Fraïssé limits obtained this way. 
        \item\label{item.last} The class of finite metric spaces with distance set $\{0,1,\dots,n\}$ for some fixed $n\geq 1$. For $n=1$, we recover \ref{item.first} and for $n=2$, we recover \ref{item.Rado}. 
    \end{enumerate}
    We now explain briefly why the automorphism groups of these structures satisfy the assumptions of Theorem \ref{thm.JTimpliesDdF}. 
    \begin{itemize}
        \item In all the examples \ref{item.first} - \ref{item.last}, the automorphism group of the Fraïssé limit is oligomorphic because there are only finitely many isomorphism types of finite structures generated by $n$ elements in the corresponding Fraïssé class.
        \item In all the examples \ref{item.first} - \ref{item.last}, the automorphism group of the Fraïssé limit has no algebraicity because the corresponding Fraïssé class has the strong amalgamation property, see \cite[(2.15)]{Cameron} for a definition.
        \item A closed subgroup $G\leq S_\infty$ has the \emph{strong small index property} if every subgroup $H\leq G$ of index $<2^{\aleph_0}$ lies between the pointwise and the setwise stabilizer of a finite set $A\subseteq\N$. This property implies weak elimination of imaginaries and has been verified for the examples \ref{item.first} and \ref{item.Rado}-\ref{item.Henson}, see \cite{PaoliniShelahSSIP}. The fact that the other examples weakly eliminate imaginaries is rather standard. \end{itemize} 
    Let us close this example by mentioning that the automorphism groups in \ref{item.first}-\ref{item.last} are pairwise non (abstractly) isomorphic, implying in particular that there are continuum many group satisfying the hypothesis of Theorem \ref{thmintro.pmprigide}. We refer to \cite{PaoliniShelah} for the problem of reconstructing a countable structure from its automorphism group.
\end{example}

\section{Preliminaries}\label{sec.Prelim}

\subsection{Structures and logic actions}
A relational language $\LL=(R_i)_{i\in I}$ is a countable collection of relation symbols, each of which has a given arity $r_i$. A structure $\M$ in the language $\LL$ (with domain $\N$) is a collection of subsets $R_i^{\M}\subseteq \N^{r_i}$ for each $i\in I$, which are interpretations of the abstract relation symbols $R_i$. 

Examples of structures include simple graphs, where the language is a single binary relation, whose interpretation is the set of edges of the graph. Similarly, $k$-hypergraphs are structures, the language being a single $k$-ary relation. We can also see linear (as well as partial) orders as structures with a single binary relation.

We denote by 
\[\Struc_\LL\coloneqq \prod_{i\in I}\{0,1\}^{\N^{r_i}}\]
the space of structures in the language $\LL$. This is a compact space with the product topology and there is a natural continuous $S_\infty$-action on it called the \defin{logic action}: for  $g\in S_\infty$ and $\M$ a structure, $g\cdot\M$ is the structure $\mathbf{N}$ defined by  
\[\forall i\in I, (R_i^\mathbf{N}(x_1,\dots,x_{r_i})=1 \Leftrightarrow R_i^\M(g\inv(x_1,\dots,x_{r_i}))=1).\]
The automorphism group of $\M\in\Struc_\LL$ is the stabilizer of $\M$ for the logic action, i.e.,
\[\Aut(\M)\coloneqq\{g\in S_\infty\colon g\cdot\M=\M\}.\]

Let $G$ be a closed subgroup of $S_\infty$. For all $n\geq 1$, let $J_n$ be the set of orbits of the diagonal action $G\curvearrowright\N^n$ and let $J=\bigcup_{n\geq 1}J_n$. We denote by $\LL_G\coloneqq (R_j)_{j\in J}$ the \defin{canonical language} associated with $G$, where $R_j$ is of arity $n$ for all $j\in J_n$. The \defin{canonical structure} associated with $G$ is the structure $\M_G\coloneqq (R_j^G)_{j\in J}$, where $R_j^G=j\subseteq \N^n$ for all $j\in J_n$. It is easy to check that $\Aut(\M_G)=G$. In this article we will deal with structures which expand the canonical structure associated with a closed subgroup $G\leq S_\infty$. For a language $\LL$ which contains $\LL_G$, an element $\M\in\Struc_{\LL}$ is an expansion of $\M_G$ if the structure $\M_{\upharpoonright\LL_G}\in\Struc_{\LL_G}$ (called the $\LL_G$-reduct of $\M$) obtained from $\M$ by removing the relation symbols from $\LL\setminus\LL_G$ is \textit{equal} to $\M_G$. 

\begin{definition}
Let $G\leq S_\infty$ be a closed subgroup and let $\LL$ be a language which contains $\LL_G$. A \defin{$G$-structure} in the language $\LL$ is an element of $\Struc_{\LL}$ which is an expansion of the canonical structure $\M_G$. 
\end{definition}

 We denote by $\Struc_\LL^G$ the space of $G$-structures in the language $\LL$. This is a closed subset of $\Struc_{\LL}$ and there is a natural continuous $G$-action on $\Struc_\LL^G$ which is induced from the $G$-action on $\Struc_\LL$ and is called the \defin{relativized logic action} \cite[\S2.7]{BeckerKechris}. With this terminology, for any language $\LL$, there is a canonical homeomorphism between $\Struc_\LL$ and $\Struc^{S_\infty}_{\LL\sqcup\LL_{S_{\infty}}}$ and we will therefore consider elements of $\Struc_\LL$ as $S_\infty$-structures. We will use the relativized logic action in an essential way in Section \ref{sec.rigidity} through a universality theorem of Becker and Kechris \cite[Thm.~2.7.4]{BeckerKechris}, which states that whenever $\LL\setminus\LL_G$ contains relations of arbitrarily high arity, then for any Borel action $G\curvearrowright X$ on a standard Borel space, there exists a Borel $G$-equivariant injective map $X\to\Struc_\LL^G$. Let us state the following variation of the latter theorem, which will allows us to translate results about invariant random expansions to results about p.m.p.\ actions.
 
\begin{lemma}[{\cite[Thm.~2.7.4]{BeckerKechris}}]\label{lem.dictionnary}
     Let $G\curvearrowright (X,\mu)$ be a p.m.p.\ action. Let $\LL$ be a language which contains $\LL_G$ and such that $\LL\setminus\LL_G$ contains relations of arbitrarily high arity. Then there exists a $G$-invariant probability measure $\nu$ on $\Struc_\LL^G$ and a p.m.p.\ isomorphism $\pi : (X,\mu)\overset{\simeq}{\longrightarrow}(\Struc_\LL^G,\nu)$ that intertwines the $G$-action on $X$ and the relativiezd logic $G$-action on $\Struc_\LL^G$. In particular, for $\mu$-a.e.\ $x\in X$, $\Stab(x)=\Aut(\pi(x))$. 
 \end{lemma}

\subsection{First-order and infinitary logics}

In this section, we fix a countable relational language $\LL=(R_i)_{i\in I}$. An atomic formula in the language $\LL$ is an expression of the form $R_i(v_1,\dots,v_{r_i})$ for some $i\in I$, where $v_1,\dots,v_{r_i}$ are free variables and $r_i$ is the arity of $R_i$. The set of quantifier-free formulas is the smallest set that contains atomic formulas and is closed under negation, finite conjunction and finite disjunction. We denote by $\LL_{\omega_1,\omega}$ the infinitary logic in the language $\LL$, which is the smallest set containing atomic formulas and closed under negation, under universal and existential quantification and under conjunction and disjunction of any countable family of formulas with a common finite set of free variables. We denote by $\LL_{\omega,\omega}$ the standard finitary first-order logic in the language $\LL$, which consists in first-order formulas, that is, formulas in $\LL_{\omega_1,\omega}$ with only finitely many disjunctions and conjunctions. A formula in $\LL_{\omega_1,\omega}$ without free variables is called a sentence. For an atomic formula $\phi(v_1,\dots,v_{r_i})=R_i(v_1,\ldots,v_{r_i})$ and $\x=(x_1,\ldots,x_{r_i})\in \N^{r_i}$, we say that $\M\in\Struc_\LL$ satisfies $\phi(\x)$ if $R_i^{\M}(x_1,\ldots,x_{r_i})=1$. The interpretation of any non-atomic formula $\phi(v_1,\dots,v_n)\in\LL_{\omega_1,\omega}$ is defined inductively, the interpretation of each symbol corresponding to its usual use in mathematics. For any tuple $\x\in\N^n$, we write $\M\models\phi(\x)$ whenever $\M$ satisfies $\phi(\x)$. The following lemma is straightforward and will be used many times in the sequel. 

 \begin{lemma}\label{lem.facts}
     Let $\M\in\Struc_\LL$, $\phi(v_1,\dots,v_n)\in\LL_{\omega_1,\omega}$ and $x\in\N^{<\omega}$. 
     \begin{enumerate}[label=(\roman*)]
        \item\label{item.Borelsubset} The set $\{\M\in\Struc_\LL\colon \M\models\phi(\x)\}$ is a Borel subset of $\Struc_\LL$. 
         \item\label{item.modelsinvariant} For all $g\in S_\infty$, we have $\M\models \phi(\x)\Leftrightarrow g\cdot\M\models \phi(g(\x))$.
     \end{enumerate}
 \end{lemma}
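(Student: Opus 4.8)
The plan is to prove both items by a simultaneous induction on the complexity of the formula $\phi\in\LL_{\omega_1,\omega}$, the main point being that the domain $\N$ is countable, so that quantifiers translate into countable Boolean operations and everything stays inside the Borel $\sigma$-algebra.

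For the base case, suppose $\phi(v_1,\dots,v_{r_i})=R_i(v_1,\dots,v_{r_i})$ is atomic and $\x=(x_1,\dots,x_{r_i})\in\N^{r_i}$. Then $\{\M\in\Struc_\LL\colon\M\models\phi(\x)\}=\{\M\colon R_i^\M(x_1,\dots,x_{r_i})=1\}$ is a sub-basic clopen set of the product topology on $\Struc_\LL$, hence Borel, which gives \ref{item.Borelsubset}. For \ref{item.modelsinvariant}, by the definition of the logic action the structure $g\cdot\M$ satisfies $R_i^{g\cdot\M}(g(x_1),\dots,g(x_{r_i}))=1$ if and only if $R_i^{\M}(x_1,\dots,x_{r_i})=1$, which is precisely the equivalence $g\cdot\M\models\phi(g(\x))\Leftrightarrow\M\models\phi(\x)$.

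For the inductive step, the cases of negation and of countable conjunction and disjunction are immediate: for \ref{item.Borelsubset} one uses that the Borel $\sigma$-algebra is closed under complementation and under countable unions and intersections (here one uses that all formulas in the countable family share the same finite tuple of free variables, so that after substituting $\x$ one obtains a countable family of Borel subsets of $\Struc_\LL$), and for \ref{item.modelsinvariant} the equivalence is inherited from the inductive hypothesis applied to each subformula. For a quantifier, say $\phi(\bar v)=\exists w\,\psi(w,\bar v)$, one writes
\[\{\M\colon\M\models\phi(\x)\}=\bigcup_{n\in\N}\{\M\colon\M\models\psi(n,\x)\},\]
a countable union of Borel sets by the inductive hypothesis, proving \ref{item.Borelsubset}; for \ref{item.modelsinvariant}, $\M\models\phi(\x)$ iff $\M\models\psi(n,\x)$ for some $n\in\N$, iff (by the inductive hypothesis) $g\cdot\M\models\psi(g(n),g(\x))$ for some $n\in\N$, and since $g$ is a bijection of $\N$ this is equivalent to $g\cdot\M\models\psi(m,g(\x))$ for some $m\in\N$, i.e.\ to $g\cdot\M\models\phi(g(\x))$. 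The universal quantifier is handled identically with intersections in place of unions. There is no genuine obstacle here; the only point requiring minor care is that $\LL_{\omega_1,\omega}$ permits countable conjunctions and disjunctions as well as quantification over the countable set $\N$, so all the operations involved remain within the Borel $\sigma$-algebra, and finitariness of the language plays no role.
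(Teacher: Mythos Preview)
Your proof is correct and is precisely the standard induction on formula complexity that the paper has in mind; the paper itself omits the argument, calling the lemma ``straightforward'' immediately before stating it.
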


 A \defin{fragment} in $\LL_{\omega_1,\omega}$ is a set which contains $\LL_{\omega,\omega}$ and is closed under subformula, finite conjunction and disjunction, negation, universal and existential quantification, and substitution of free variables. Any subset $\Sigma\leq\LL_{\omega_1,\omega}$ is contained in a least fragment denoted by $\langle\Sigma\rangle$, which is countable whenever $\Sigma$ is. By definition, $\langle\emptyset\rangle=\LL_{\omega,\omega}$. Let $F$ be a fragment. Given $n\geq 1$, the set of formulas $\phi(v_1,\dots,v_n)\in F$ with $n$-free variables is a Boolean algebra and we denote by $S^n_F$ its Stone space, which is the space of \defin{$F$-types} in $n$ variables. By Stone duality, $S^n_F$ is a compact Hausdorff totally disconnected space, which is moreover metrizable if and only if $F$ is a countable fragment. A basis of clopen sets of $S^n_F$ is given by the sets $[\phi]\coloneqq \{p\in S^n_F\colon \phi\in p\}$ for $\phi\in F$. Given $\M\in\Struc_\LL$ and $\x\in\N^n$ we denote by $\tp^\M_F(\x)$ the $F$-type in $S^n_F$ defined by 
 \[\tp^\M_F(\x)\coloneqq\{\phi\in F\colon \M\models\phi(\x)\}.\]
 The following lemma is the equivalent of Lemma \ref{lem.facts} for $F$-types. 
 \begin{lemma}
     Let $F$ be a fragment and $\x\in\N^n$. The following holds
     \begin{enumerate}[label=(\roman*)]\label{lem.typefacts}
         \item\label{item.tpBorel} The map $\tp_F(\x) : \Struc_\LL\to S^n_F$ is Borel.
         \item\label{item.tpinvariant} For all $\M\in\Struc_\LL$ and $g\in S_\infty$, we have $\tp_F^\M(\x)=\tp_F^{g\cdot\M}(g(\x))$. 
         \item\label{item.memeorbitememetype} For all $\M\in\Struc_\LL$ and all $\y\in \Aut(\M)\cdot\x$, we have $\tp_F^\M(\x)=\tp_F^\M(\y)$.
     \end{enumerate}
 \end{lemma}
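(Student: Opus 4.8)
The plan is to deduce all three items directly from Lemma~\ref{lem.facts} and the definitions of the type map and of the Stone space $S^n_F$; the statement is essentially a ``typed'' repackaging of the facts already recorded there for individual formulas, so no new idea is needed.

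First I would treat \ref{item.tpBorel}. Recall that the clopen sets $[\phi] = \{p\in S^n_F : \phi\in p\}$, for $\phi\in F$, form a basis of $S^n_F$, so its Borel $\sigma$-algebra is the one they generate. Unravelling the definition of $\tp_F^{\M}(\x)$, for each $\phi\in F$ one has
\[
\tp_F(\x)^{-1}\big([\phi]\big) = \{\M\in\Struc_\LL : \phi\in\tp_F^{\M}(\x)\} = \{\M\in\Struc_\LL : \M\models\phi(\x)\},
\]
which is Borel by Lemma~\ref{lem.facts}\ref{item.Borelsubset}. Since the family of subsets of $S^n_F$ whose preimage under $\tp_F(\x)$ is Borel is a $\sigma$-algebra (preimages commute with complements and countable unions) and contains every $[\phi]$, it contains the whole Borel $\sigma$-algebra of $S^n_F$; hence $\tp_F(\x)$ is Borel. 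Next, for \ref{item.tpinvariant}, I fix $\M\in\Struc_\LL$, $g\in S_\infty$ and an arbitrary $\phi\in F\subseteq\LL_{\omega_1,\omega}$: Lemma~\ref{lem.facts}\ref{item.modelsinvariant} gives $\M\models\phi(\x)$ if and only if $g\cdot\M\models\phi(g(\x))$, that is, $\phi\in\tp_F^{\M}(\x)$ if and only if $\phi\in\tp_F^{g\cdot\M}(g(\x))$; since this holds for every $\phi\in F$, the two $F$-types coincide. Finally, \ref{item.memeorbitememetype} is the special case of \ref{item.tpinvariant} where $g\in\Aut(\M)$: then $g\cdot\M=\M$, so writing $\y=g(\x)$ yields $\tp_F^{\M}(\x)=\tp_F^{g\cdot\M}(g(\x))=\tp_F^{\M}(\y)$.

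I do not expect any genuine obstacle. The only point that calls for a word of care is the meaning of ``Borel'' in \ref{item.tpBorel} when $F$ is uncountable, in which case $S^n_F$ is not metrizable; reading its Borel $\sigma$-algebra as the one generated by the basic clopen sets $[\phi]$ (the only natural choice here) makes the argument above go through verbatim.
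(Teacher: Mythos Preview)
Your proof is correct and follows essentially the same approach as the paper's: both reduce \ref{item.tpBorel} to Lemma~\ref{lem.facts}\ref{item.Borelsubset} via the identity $\tp_F(\x)^{-1}([\phi])=\{\M:\M\models\phi(\x)\}$, derive \ref{item.tpinvariant} directly from Lemma~\ref{lem.facts}\ref{item.modelsinvariant}, and observe that \ref{item.memeorbitememetype} is the special case $g\in\Aut(\M)$ of \ref{item.tpinvariant}. Your version is slightly more explicit (spelling out the $\sigma$-algebra argument and commenting on the uncountable-fragment case), but there is no substantive difference.
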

 \begin{proof}
     Let us prove \ref{item.tpBorel}. For all $\phi\in F$, observe that \[\{\M\in\Struc_\LL\colon \tp^\M_F(\x)\in [\phi]\}=\{M\in\Struc_\LL^G\colon \M\models\phi(\x)\},\]
 which is Borel by Lemma \ref{lem.facts} \ref{item.Borelsubset}. Therefore, the map $\tp_F(\x): \Struc_\LL\to S^n_F$ is Borel. The proof of \ref{item.tpinvariant} is a straightforward consequence of Lemma \ref{lem.facts} \ref{item.modelsinvariant}. Finally, \ref{item.memeorbitememetype} is a particular case of \ref{item.tpinvariant}. 
 \end{proof}
 We finally need the notion of quantifier-free types. The space of quantifier-free types in $n$ variables is the Stone space of the Boolean algebra of quantifier-free formulas with $n$ free variables. Given $\M\in\Struc_\LL$ and $\x\in\N^n$, we denote by $\mathrm{qftp}^\M(\x)$ the quantifier-free type defined by
 \[\mathrm{qftp}^\M(\x)\coloneqq\{\phi(v_1,\dots,v_n)\text{ quantifier-free formula}\colon\M\models \phi(\x)\}.\]
 An important remark is that if $\M$ is a $G$-structure for some closed subgroup $G\leq S_\infty$, then for all $\x,\y\in \N^{<\omega}$, $\mathrm{qftp}^{\M}(\x)=\mathrm{qftp}^{\M}(\y)$ implies that $\x$ and $\y$ are in the same $G$-orbit. An immediate corollary is that for any countable fragment $F$, $\tp^{\M}_F(\x)=\tp^{\M}_F(\y)$ implies that $\x$ and $\y$ are in the same $G$-orbit.

 \subsection{Back-and-forth}

Let us fix in this section a closed subgroup $G\leq S_\infty$ and a countable relational language $\LL$ which contains $\LL_G$. The following lemma, which is almost tautological, is at the heart of the technique of back-and-forth that we explain next. 

\begin{lemma}\label{lem:backandforth}
    Let $\M,\mathbf{N}\in\Struc_\LL^G$. Assume that there exist two sequences $A_0\subseteq A_1\subseteq \dots$,  $B_0\subseteq B_1\subseteq \dots$ of finite subsets of $\N$ and a sequence $g_0\subseteq g_1\subseteq\dots$ of bijections $g_n:A_n\to B_n$, which are restrictions of elements in $G$, such that $\bigcup_n A_n=\bigcup_n B_n=\N$ and for all $n\geq 0$, for all $R\in\LL$ of arity $r$, for all $x_1,\dots,x_r\in A_n$, we have 
        \begin{align}\label{conditionétoile}R^\M(x_1,\dots,x_r)=1\Leftrightarrow R^{\mathbf{N}}(g_n(x_1,\dots,x_r))=1.\tag{$*$}\end{align}
    Then $g=\bigcup_ng_n$ belongs to $G$ and satisfies $g\cdot\M=\mathbf N$. If $\M=\mathbf{N}$, then $g\in\Aut(\M)$.
\end{lemma}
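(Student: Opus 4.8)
The plan is to verify first that $g=\bigcup_n g_n$ is a well-defined bijection of $\N$, and then that it lies in $G$ and sends $\M$ to $\mathbf{N}$. Since each $g_n:A_n\to B_n$ is a bijection and $g_n\subseteq g_{n+1}$, the union $g$ is a well-defined function on $\bigcup_n A_n=\N$; it is injective because on any two inputs $x,y\in\N$ there is some $n$ with $x,y\in A_n$, where $g_n$ is injective; and it is surjective because $\bigcup_n B_n=\N$, so every target point is hit by some $g_n$. Hence $g\in S_\infty$.

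Next I would show $g\in G$. Each $g_n$ is by hypothesis the restriction of some element $h_n\in G$. The key point is that $G$ is a \emph{closed} subgroup of $S_\infty$ for the topology of pointwise convergence, and the sequence $(h_n)$ converges pointwise to $g$: for any $k\in\N$, as soon as $n$ is large enough that $k\in A_n$, we have $h_m(k)=g_m(k)=g(k)$ for all $m\ge n$ (using $g_n\subseteq g_{n+1}$). Therefore $g$ is a pointwise limit of elements of $G$, and closedness of $G$ gives $g\in G$. (One could alternatively invoke that $G$, being closed, is itself an automorphism group of a relational structure and verify the type-preservation directly, but the closedness argument is the cleanest.)

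Finally I would check $g\cdot\M=\mathbf{N}$, i.e.\ that for every relation symbol $R\in\LL$ of arity $r$ and every tuple $(x_1,\dots,x_r)\in\N^r$ we have $R^{\M}(x_1,\dots,x_r)=1\iff R^{\mathbf{N}}(g(x_1,\dots,x_r))=1$, which is exactly the defining condition for $g\cdot\M=\mathbf{N}$ under the logic action. Given such a tuple, pick $n$ with $x_1,\dots,x_r\in A_n$; then $g(x_i)=g_n(x_i)$ for each $i$, and condition \eqref{conditionétoile} at level $n$ gives precisely the required equivalence. Since this holds for all symbols and all tuples, $g\cdot\M=\mathbf{N}$. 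In the special case $\M=\mathbf{N}$ this says $g\cdot\M=\M$, i.e.\ $g\in\Aut(\M)$, as claimed.

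The statement is essentially bookkeeping, so there is no real obstacle; the only point requiring a little care is the convergence argument in the second paragraph — one must use the nesting $g_n\subseteq g_{n+1}$ to guarantee that the values $h_n(k)$ actually stabilize (rather than merely eventually being defined), so that $g$ genuinely lies in the closure of $G$.
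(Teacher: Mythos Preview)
Your proof is correct and is exactly the routine verification the paper has in mind; the paper does not actually write out a proof of this lemma, calling it ``almost tautological'' and proceeding directly to the discussion of how to set up the inductive back-and-forth. The one point you rightly flag as needing care---that the nesting $g_n\subseteq g_{n+1}$ forces the values $h_n(k)$ to stabilize, so that $g$ is a genuine pointwise limit of elements of the closed group $G$---is precisely the only nontrivial step.
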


The way we will use this lemma in our proofs is by inductively constructing $A_n,B_n$ and $g_n$. Assume for the initial step that we have $A_0,B_0$ with same quantifier-free type and $g_0$ a bijection between $A_0$ and $B_0$ preserving the relations. The strategy is to take $(x_i)_{i\geq 1}$ and $(y_j)_{j\geq 1}$ enumerations of $\N$ and to ensure that $A_n$ contains $x_1,\ldots,x_n$ and $B_n$ contains $y_1,\ldots,y_n$. The key is to construct both sets in order to ensure that for any $x\in \N$ there is $y\in\N$ such that $\mathrm{qftp}^\M(A_n,x)=\mathrm{qftp}^\mathbf{N}(B_n,y)$ and for all $y'\in \N$ there is $x'\in \N$ such that $\mathrm{qftp}^{\mathbf{N}}(B_n,y')=\mathrm{qftp}^\M(A_n,x')$. 

If such is the case, assume that we have constructed $A_n, B_n$ as in the former paragraph and $g_n : A_n\to B_n$ a bijection which is a restriction of an element in $G$. Let $i$ be the smallest index such that $x_i\notin A_n$ and $j$ the smallest index such that $y_j\notin B_n$ (by construction, $i>n$ and $j>n$). By assumption, there exists $y\in\N$ and $x\in\N$
\begin{align}
\label{qftp1}\mathrm{qftp}^\M(A_n,x_i)&=\mathrm{qftp}^\mathbf{N}(B_n,y),\\
\label{qftp2}\mathrm{qftp}^{\mathbf N}(B_n,y,y_j)&=\mathrm{qftp}^\M(A_n,x_i,x).
\end{align}
We then set $A_{n+1}=A_n\cup \{x_i,x\}$, $B_{n+1}=B_n\cup\{y,y_j\}$. Since $x_i\notin A_n$ and $y_j\notin B_n$, the equalities of the quantifier-free types in \eqref{qftp1} and \eqref{qftp2} imply that $x\notin A_n$ and $y\notin B_n$. This allows to extend $g_n$ into a bijection $g_{n+1} : A_{n+1}\to B_{n+1}$ by setting $g_{n+1}(x_i)=y$ and $g_{n+1}(x)=y_j$. Equations \eqref{qftp1} and \eqref{qftp2} and the definition of quantifier-free type ensure that $g_{n+1}$ is indeed the restriction of an element in $G$ and that the condition \eqref{conditionétoile} in Lemma \ref{lem:backandforth} is satisfied. 

\subsection{Non-Archimedean groups with no algebraicity that weakly eliminate imaginaries}

We give a well-known characterization of groups with no algebraicity and admitting weak elimination of imaginaries (see for instance \cite[Lem.~16.17]{Poizat}). We provide here a proof in the language of permutation groups. If $G\leq S_\infty$ is a closed subgroup, we denote by $\Fix(G)$ the set of points in $\N$ which are fixed by $G$.

\begin{lemma}\label{lem.GAGB} 
    Let $G\leq S_\infty$ be a closed subgroup. Then $G$ has no algebraicity and admits weak elimination of imaginaries if and only if $\Fix(G)=\emptyset$ and for all $A,B\subseteq\N$ finite, we have $\langle G_A,G_B\rangle=G_{A\cap B}$. 
\end{lemma}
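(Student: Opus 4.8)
The statement is a biconditional, so I would prove it in two directions. The reformulation in terms of $\Fix(G) = \emptyset$ and $\langle G_A, G_B\rangle = G_{A\cap B}$ is the "internal" characterization; the left side ("no algebraicity and weak elimination of imaginaries") is the model-theoretic one. The key bridge fact is that weak elimination of imaginaries says every open subgroup $H \leq G$ contains some $G_C$ ($C$ finite) as a finite-index subgroup, and the relevant open subgroups here will be $\langle G_A, G_B\rangle$ (open because it contains the open subgroup $G_{A\cup B}$, in fact it contains $G_A$) and setwise stabilizers.

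**Direction 1: the two conditions imply no algebraicity + WEI.** First, $\Fix(G)=\emptyset$ together with $\langle G_A, G_B\rangle = G_{A\cap B}$ should give no algebraicity: I want to show $\mathrm{acl}(A) = A$ for finite $A$. Suppose $b \in \mathrm{acl}(A) \setminus A$. Then $b$ lies in a finite $G_A$-orbit. I would like to use the identity $\langle G_A, G_{\{b\}}\rangle = G_{A \cap \{b\}} = G_\emptyset = G$ (using $b \notin A$). Since $b$ has a finite $G_A$-orbit, $G_{A\cup\{b\}}$ has finite index in $G_A$; but then... I need to derive a contradiction with $\Fix(G) = \emptyset$. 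The idea: if $b$ is algebraic over $A$ but not in $A$, one shows $\langle G_A, G_b \rangle$ still fixes $b$ setwise-with-finite-orbit, so it cannot be all of $G$ unless $b$ is fixed; more carefully, $\langle G_A, G_b\rangle = G$ would force $G$ to stabilize the finite $G_A$-orbit of $b$ and the finite $G_b$-orbit of any point, which combined with transitivity-type arguments collapses to $\Fix(G) \ni b$. I would need to be a bit careful here — this is one of the two places the argument has real content. For WEI: given an open $H \leq G$, it contains some $G_A$ with $A$ finite; I want to find finite $B$ with $G_B \leq H$ of finite index and $H$ inside the setwise stabilizer of $B$. The natural candidate is $B = \bigcap\{A' : G_{A'} \leq H\}$, or rather the intersection over all $h\in H$ of $h\cdot A$ for a minimal such $A$; the hypothesis $\langle G_A, G_{A'}\rangle = G_{A\cap A'}$ is exactly what lets one intersect two "witnesses" $G_A, G_{A'} \leq H$ into $G_{A\cap A'} \leq H$, so that there is a well-defined smallest witness $G_B$, and then $H$ normalizes nothing but does stabilize $B$ setwise by minimality/uniqueness. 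Finite index follows since $G_A \leq G_B \leq H$ and $G_A$ has finite index in $H$ (as $H$ is a subgroup of... wait, need $[H:G_A]$ finite — this comes from $H$ open hence $H$ contains $G_A$ open, but finite index is part of WEI's conclusion, so I should instead note $G_A \leq H$ and by oligomorphy-free reasoning... actually no oligomorphy is assumed here, so finite index must come differently: $H \leq G$ open and $G_A \leq H$, with $A$ chosen so that... hmm, in fact one should start from the hypothesis being *equivalent*, so WEI is what we prove, and the finite-index part: $G_B$ has finite index in $H$ because $H$ acts on the finite set of "minimal witness tuples" and the kernel is $G_B$). I'll need to assemble this carefully.

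**Direction 2: no algebraicity + WEI imply the two conditions.** That $\Fix(G) = \emptyset$: if $b \in \Fix(G)$ then $b \in \mathrm{acl}(\emptyset)$ (finite orbit $\{b\}$), so no algebraicity gives $b \in \emptyset$, contradiction. For $\langle G_A, G_B\rangle = G_{A\cap B}$: one inclusion $\langle G_A, G_B\rangle \subseteq G_{A\cap B}$ is immediate. For the reverse, let $H = \langle G_A, G_B\rangle$, an open subgroup containing $G_A$. By WEI, $H$ contains $G_C$ as a finite-index subgroup for some finite $C$. Since $G_C \leq H \subseteq G_{A\cap B}$ and conversely $G_{A\cup C} \leq G_C$... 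The plan is to show $C \subseteq A \cap B$. Because $G_A \leq H$ and $G_C$ has finite index in $H$, $G_{A\cup C}$ has finite index in $G_A$, so every point of $C$ is algebraic over $A$, hence (no algebraicity) $C \subseteq A$; symmetrically $C \subseteq B$, so $C \subseteq A\cap B$, giving $G_{A\cap B} \leq G_C \leq H$. This direction is cleaner. The main obstacle overall is Direction 1 — specifically extracting $\Fix(G) = \emptyset \Rightarrow$ (no algebraicity) and nailing the finite-index bookkeeping in the WEI construction — so I would spend most of the write-up there, and dispatch Direction 2 quickly.
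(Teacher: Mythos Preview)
Your overall structure matches the paper's proof exactly, including the WEI argument via a unique minimal witness $B$ and the clean Direction~2 argument showing $C\subseteq A\cap B$ via no algebraicity. There is, however, a genuine gap in your Direction~1 no-algebraicity step.

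You take $b\in\mathrm{acl}(A)\setminus A$ with finite $G_A$-orbit and want to argue that $\langle G_A,G_b\rangle$ still has $b$ in a finite orbit, contradicting $\langle G_A,G_b\rangle=G$ together with $\Fix(G)=\emptyset$. But this claim is unsupported: $G_b$ fixes $b$, yet it can move the other points of the $G_A$-orbit of $b$ arbitrarily, so alternating words in $G_A$ and $G_b$ can carry $b$ outside that finite set. There is no reason the $\langle G_A,G_b\rangle$-orbit of $b$ should be finite. (And even if it were finite, $\Fix(G)=\emptyset$ only rules out orbits of size~$1$, not all finite orbits, so you would still need more.)

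The paper closes this gap with a short reduction you are missing: if algebraicity fails, one can always arrange that $b$ is actually a \emph{fixed point} of $G_A$ outside $A$. Indeed, if $G_{A'}$ has a finite orbit $\{b_1,\dots,b_n\}$ disjoint from $A'$, set $A=A'\cup\{b_1,\dots,b_{n-1}\}$ and $b=b_n$; then $G_A$ fixes $b$. Now $G_A=G_{A\cup\{b\}}$, and applying the hypothesis \emph{twice} gives
\[
G \;=\; G_{A\cap\{b\}} \;=\; \langle G_A,G_b\rangle \;=\; \langle G_{A\cup\{b\}},G_b\rangle \;=\; G_{(A\cup\{b\})\cap\{b\}} \;=\; G_b,
\]
so $b\in\Fix(G)$, the desired contradiction. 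With this reduction in place, your proof goes through and coincides with the paper's.
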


\begin{proof}
    $(\Rightarrow)$ First, we readily get $\Fix(G)=\emptyset$ because $G$ has no algebraicity. Fix $A,B\subseteq\N$ two finite subsets and let $V=\langle G_A,G_B\rangle$. By definition, $V\leq G_{A\cap B}$. Let us show the reverse inclusion. Since $G$ admits weak elimination of imaginaries, there exists a finite subset $C\subseteq\N$ such that $G_C\leq V$ and $[V:G_C]<+\infty$. We will prove that $C\subseteq A\cap B$. The fact that $G_C$ has finite index in $V$ implies that the $V$-orbit of every $c\in C$ is finite. However, the $G_A$-orbit of every $x\in\N\setminus A$ is infinite since $G$ has no algebraicity. Since $G_A\leq V$, the $V$-orbit of every $x\in\N\setminus A$ is infinite. Therefore $C\subseteq A$. Similarly, $C\subseteq B$. We conclude that $C\subseteq A\cap B$ and thus $G_{A\cap B}\leq G_C\leq V=\langle G_A,G_B\rangle\leq G_{A\cap B}$. 

    \noindent
    $(\Leftarrow)$ Let us first show that $G$ has no algebraicity. Assume that this is not the case. Then there exists a finite subset $A\subseteq\N$ such that $G_A\curvearrowright\N\setminus A$ has a fixed point $b\in\N\setminus A$ (indeed, we know that there exists $A'\subseteq\N$ finite such that $G_{A'}\curvearrowright \N\setminus A'$ has a finite orbit, say $\{b_1,\dots,b_n\}$, then $A=A'\sqcup\{b_1,\dots,b_{n-1}\}$ works). Set $B=\{b\}$. Then we have $G_{A\sqcup B}=G_A$ and $G_{A\cap B}=G$. Therefore, $G=\langle G_A,G_B\rangle=\langle G_{A\sqcup B},G_B\rangle=G_B$. This shows that $B\subseteq\Fix(G)$, which is a contradiction. Thus $G$ has no algebraicity. We now prove that $G$ admits weak elimination of imaginaries. Let $V\leq G$ be an open subgroup. The property ``$\langle G_A,G_B\rangle =G_{A\cap B}$ for all $A,B\subseteq\N$ finite'' implies that there exists a unique finite subset $A_0\subseteq\N$ which is minimal (for inclusion) among all finite subsets $A\subseteq\N$ that satisfy $G_A\leq V$. Let us show that $[V:G_{A_0}]<+\infty$. Observe that for all $g\in G$, $G_{g(A_0)}$ is a subgroup of $gVg\inv$ and $g(A_0)$ is minimal among finite subsets $A\subseteq\N$ that satisfy $G_{g(A_0)}\leq gVg\inv$. This implies that for all $g\in V$, we have $g(A_0)=A_0$ and thus $[V:G_{A_0}]<+\infty$. 
\end{proof}

As a direct corollary, we obtain that the combination of weak elimination of imaginaries and no algebraicity implies primitivity.

\begin{corollary}\label{cor.WEI+noAlgimpliesprimitive}
    Let $G\leq S_\infty$ be a closed subgroup. If $G$ has no algebraicity and admits weak elimination of imaginaries, then $G$ is a primitive subgroup of $\Sym(\Omega)$.
\end{corollary}

\begin{proof}
   We show that for all $a\in\N$, $G_a$ is a maximal subgroup of $G$. Fix $U\leq G$ a subgroup strictly containing $G_a$. Let $g\in G\setminus G_a$ and let $b\coloneqq g(a)$. Then $G_a$ and $gG_ag\inv=G_b$ are contained in $G$. By Lemma \ref{lem.GAGB}, $\langle G_a,G_b\rangle =G$ is contained in $U$. So $U=G$ and $G_a$ is maximal. 
\end{proof}

The following lemma will be useful to characterize proper subgroups of $S_\infty$. A permutation $\tau\in S_\infty$ is a transposition if $\tau$ is a $2$-cycle.

\begin{lemma}\label{lem.transposition}
    Let $G\leq S_\infty$ be a transitive, closed subgroup, which has no algebraicity and is primitive. If $G$ contains a transposition, then $G=S_\infty$.
\end{lemma}

\begin{proof}
    We define an equivalence relation $E$ on $\N$ as follows: \[aEb \Leftrightarrow 
    a=b \text{ or the transposition which exchanges }a\text{ and }b \text{ belongs to }G.\] Notice that $E$ is $G$-invariant: if the transposition $\iota$ which exchanges $a$ and $b$ belongs to $G$, then for all $g\in G$, $g\iota g\inv$ is the transoposition which exchanges $g(a)$ and $g(b)$ and it belongs to $G$. $E$ is not the equality since $G$ contains a transposition. By primitivity, $E=\N\times\N$. This shows that $G=S_\infty$.
\end{proof}

From the previous lemma, we get the following corollary, that we will use later.

\begin{corollary}\label{cor.separationGorbites}
    Let $G\lneq S_\infty$ be a transitive, proper, closed subgroup. If $G$ has no algebraicity and is primitive, then for all $a,b\in\N$ distinct, there exists infinitely many disjoint tuples $\z$, all disjoint from $a$ and $b$, such that $(\z,a)$ and $(\z,b)$ lies in different $G$-orbits. 
\end{corollary}

\begin{proof}
    Assume that there is no such tuple $\z$. Fix an enumeration $z_0,z_1,\dots$ of $\N\setminus\{a,b\}$. Then there exists a sequence $(g_n)_{n\geq 0}$ of elements in $G$ such that $g_n(z_i)=z_i$ for all $i\leq n$ and $g_n(a)=b$. The sequence $(g_n)_{n\geq 0}$ converges to the transposition that exchanges $a$ and $b$. But $G$ is a proper subgroup of $S_\infty$, so $G$ contain no transposition by Lemma \ref{lem.transposition}. This yields a contradiction. So there exists such a tuple $\z$. But $G$ has no algebraicity, so there exists infinitely many disjoint such tuples by Neumann's lemma. 
\end{proof}

\section{Invariant Random Expansions}\label{sec.IRE}

In this section we fix a closed subgroup $G\leq S_\infty$ and a countable relational language $\LL$ which contains the canonical language $\LL_G$. Recall that $\Struc_\LL^G$ denotes the compact space of expansions of the canonical structure $\M_G$, which are the structures $\M\in\Struc_\LL$ whose $\LL_G$-reduct $\M_{\upharpoonright\LL_G}$ is equal to $\M_G$. We now turn our attention to invariant random expansions.

\begin{definition}
    A invariant random expansion of $G$ in the language $\LL$ is a Borel probability measure on $\mathrm{Struc}_\LL^G$ which is invariant under the relativized logic action.
\end{definition}

We denote by $\mathrm{IRE}(G)$ (or $\mathrm{IRE}_{\LL}(G)$ if we want to emphasize the language) the space of invariant random expansions of $G$. We will also use $G$-IRE to refer to an element of $\mathrm{IRE}(G)$. Let us give concrete examples of invariant random expansions.

\begin{example}\label{Example.ExofIRE}\mbox{}
    \begin{enumerate}[label=(\roman*)]
        \item For all $p\in(0,1)$, the random simple graph whose vertex set is $\N$ and the edges are i.i.d.\ with distribution $\mathrm{Ber}(p)$ is an $S_\infty$-IRE. 
        \item Let $\mathrm{LO}(\N)\subseteq\{0,1\}^{\N\times\N}$ be the space of linear orders on $\N$. There is a unique $S_\infty$-invariant probability measure $\mu$ on $\mathrm{LO}(\N)$ and it is defined by its values on the cylinders $\{x_1<\dots<x_n\}$ by $\mu(\{x_1<\dots<x_n\})=1/n!$. This gives an $S_\infty$-IRE.
\item\label{item.2graph} The structure we consider in this case is a particular $3$-hypergraph. A $2$-graph is a 3-hypergraph such that there is an even number of hyperedges between any four vertices. A simple way to produce a $2$-graph is to take a graph, put an hyperedge between three vertices if there is an even number of edges between them, and then remove the edges. Any $2$-graph can be obtained this way and we call a graph producing a given $2$-graph $H$ a graphing of $H$. Consider $\mathbf{N}$ the Fraïssé limit of finite $2$-graphs, which can be obtained by the above construction starting from a Rado graph. Denote by $G$ the automorphism group of $\mathbf{N}$. There is a $G$-IRE concentrated on the space of graphings of $\mathbf N$, see \cite[Chap.~2]{Jahel}. One notable feature is that this IRE doesn't come from an $S_\infty$-IRE.
    \end{enumerate}
\end{example}

\subsection{First properties}

In this section, we state several lemmas about IREs of groups with no algebraicity. Recall that $G$ has no algebraicity if for all finite subset $A\subseteq\N$, we have $\mathrm{acl}(A)=A$. We will not prove the measurability of sets and maps in this section, all of them being straightforward, save for $\M\mapsto \Aut(\M)$. The measurability of this map is the object of Lemma \ref{lem.proprietesStab} \ref{item.stabmesurable}. 

\begin{lemma}\label{lem.fixedptinfinite}
    Assume that $G\leq S_\infty$ has no algebraicity and let $\mu\in\mathrm{IRE}(G)$. Then for $\mu$-a.e.\ $\M\in\Struc_\LL^G$, the set $\Fix(\Aut(\M))$ is either empty or infinite. \end{lemma}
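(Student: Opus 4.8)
The plan is to show that the ``bad set'' $B\coloneqq\{\M\in\Struc_\LL^G : 0<|\Fix(\Aut(\M))|<\infty\}$ is $\mu$-null. Writing $B_A\coloneqq\{\M\in\Struc_\LL^G : \Fix(\Aut(\M))=A\}$ for a nonempty finite $A\subseteq\N$, the set $B$ is the countable \emph{disjoint} union of the $B_A$'s, each of which is Borel once we grant the measurability of $\M\mapsto\Aut(\M)$ (Lemma~\ref{lem.proprietesStab}\ref{item.stabmesurable}). In particular $\sum_{A}\mu(B_A)=\mu(B)\leq 1<\infty$, and it will suffice to prove $\mu(B_A)=0$ for every nonempty finite $A$.

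First I would record the equivariance $\Fix(\Aut(g\cdot\M))=g\big(\Fix(\Aut(\M))\big)$ for $g\in S_\infty$: this follows from $\Aut(g\cdot\M)=g\,\Aut(\M)\,g\inv$ together with the elementary identity $\Fix(gHg\inv)=g(\Fix(H))$. Hence $g\cdot B_A=B_{g(A)}$, and since $\mu$ is invariant under the relativized logic action we get $\mu(B_A)=\mu(B_{g(A)})$ for all $g\in G$. In other words, $A\mapsto\mu(B_A)$ is constant along each orbit of the $G$-action on the set of nonempty finite subsets of $\N$.

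The key point is that every such orbit is infinite. Given a nonempty finite $A$ and some $a\in A$, put $A'=A\setminus\{a\}$; since $G$ has no algebraicity, the orbit $G_{A'}\cdot a$ is infinite. For $g\in G_{A'}$ one has $g(A)=A'\cup\{g(a)\}$, so the (cofinitely many) elements $g\in G_{A'}$ with $g(a)\notin A'$ yield infinitely many pairwise distinct sets in the $G$-orbit of $A$. Now group the sum $\mu(B)=\sum_{A}\mu(B_A)$ by $G$-orbits: on each orbit $\mu(B_A)$ equals a common constant $c$, and the orbit is infinite, so its total contribution is $c\cdot\infty$, which is finite only if $c=0$. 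Therefore $\mu(B_A)=0$ for every nonempty finite $A$, whence $\mu(B)=0$; equivalently, for $\mu$-a.e.\ $\M$ the set $\Fix(\Aut(\M))$ is empty or infinite.

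The argument is complete once these pieces are assembled; there is no genuine obstacle beyond the (already granted) measurability of $\M\mapsto\Aut(\M)$. The only subtlety worth stating carefully is that the numbers $\mu(B_A)$ are summable with sum $\leq 1$ because the $B_A$ are pairwise disjoint — this is exactly what upgrades ``constant along infinite orbits'' to ``identically zero''.
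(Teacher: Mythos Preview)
Your proof is correct and follows essentially the same line as the paper's. Both arguments observe that the sets $B_A$ are pairwise disjoint, that $\mu(B_A)$ is constant along each $G$-orbit of finite nonempty $A$ by invariance, and that each such orbit is infinite, forcing $\mu(B_A)=0$. The only cosmetic difference is that the paper invokes Neumann's lemma to produce infinitely many pairwise \emph{disjoint} translates $A_n$ of $A$ (which is stronger than needed), whereas you give a direct elementary argument that the $G$-orbit of $A$ contains infinitely many pairwise \emph{distinct} sets---exactly what is required for the $B_{A_n}$ to be disjoint.
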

\begin{proof}
    Assume that there exists a finite nonempty subset $A\subseteq\N$ such that \[\mu(\{\M\in\Struc_\LL^G\colon \Fix(\Aut(\M))=A\})>0.\]
    Since $G$ has no algebraicity, there exists infinitely many pairwise disjoints sets $A_n$ in the $G$-orbit of $A$ by Neumann's lemma \cite[Cor.~4.2.2]{Hodge}. By $G$-invariance of $\mu$, the sets $\{\M\in\Struc_\LL^G\colon \Fix(\Aut(\M))=A_n\}$ all have the same measure, which is strictly positive. This is absurd since they are pairwise disjoint. 
\end{proof}

The following lemma shows that if $\mu$ is an ergodic invariant random expansion of $G$ which has no algebraicity, then $\mu$-a.s., $\Aut(\M)$ has no algebraicity, apart from the fixed points $\Fix(\Aut(\M))$. 

\begin{lemma}\label{Lem:NoAlg} Assume that $G$ has no algebraicity and let $\mu\in\mathrm{IRE}(G)$. Then for all tuples $\x\in\N^{<\omega}$, the following holds:
 \begin{enumerate}[label=(\roman*)]      
     \item\label{item:fixptsIRM} for $\mu$-a.e.\ $\M\in\Struc_\LL^G$, we have $\Fix(\Aut(\M)_{\x})=\Fix(\Aut(\M))\cup \x$.
    \item\label{item:orbitsinfiniteor1}  for $\mu$-a.e.\ $\M\in\Struc_\LL^G$, the $\Aut(\M)_{\x}$-orbits on $\N$ are either of size $1$ or infinite.
 \end{enumerate}
\end{lemma}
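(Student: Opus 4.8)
The plan is to reduce everything to the case of a single tuple being fixed (i.e.\ to apply Lemma~\ref{lem.fixedptinfinite} not to $\mu$ but to a pushforward), and for this the key idea is to ``absorb'' the tuple $\x$ into the structure by passing to a relativized logic action of the pointwise stabilizer $G_{\x}$. Fix $\x\in\N^n$ and let $A=\{x_1,\dots,x_n\}$ be the underlying set. Since $G$ has no algebraicity, Lemma~\ref{lem.GAGB} (or directly the definition) gives that the $G_A$-orbits on $\N\setminus A$ are all infinite, and $\Fix(G_A)=A$. Now consider the group $H\coloneqq G_A$ together with a language $\LL'$ obtained from $\LL$ by adding $n$ new constant symbols (equivalently, unary relations) naming the points of $A$; then $H$ is a closed subgroup of $S_\infty$ whose canonical structure is $\M_G$ with $A$ named, and an $H$-structure is exactly an $\LL$-expansion of $\M_G$ together with a choice of the names. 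Restricting a measure $\mu\in\mathrm{IRE}_\LL(G)$ along the map $\M\mapsto(\M,A)$ produces an element $\mu'\in\mathrm{IRE}_{\LL'}(H)$: indeed $\mu'$ is $H$-invariant because $\mu$ is $G$-invariant and $H$ fixes $A$ setwise (in fact pointwise). Since $H$ has no algebraicity and $\Fix(H)=A$, applying Lemma~\ref{lem.fixedptinfinite} to $\mu'$ (and to the sets of the form $\{\Fix(\Aut(\M))=B\}$ for finite $B\supseteq A$) will give that for $\mu'$-a.e.\ structure, $\Fix(\Aut(\M)\cap H)$ is $A$ together with an empty-or-infinite set. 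Translating back, $\Aut(\M)_{\x}=\Aut(\M)\cap G_A$, so this yields exactly item~\ref{item:fixptsIRM}: $\Fix(\Aut(\M)_{\x})\setminus(\Fix(\Aut(\M))\cup\x)$ is empty or infinite; and the reverse inclusion $\Fix(\Aut(\M))\cup\x\subseteq\Fix(\Aut(\M)_{\x})$ is automatic. One must still rule out that $\Fix(\Aut(\M)_{\x})$ properly exceeds $\Fix(\Aut(\M))\cup\x$ on a positive-measure set by a \emph{finite} amount, but this is handled by the same Neumann's-lemma disjointness argument as in Lemma~\ref{lem.fixedptinfinite}, run inside the $H$-action (the $G_A$-orbit of any finite set $B\not\subseteq A$ contains infinitely many pairwise disjoint translates over $A$, since $G$ has no algebraicity).

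For item~\ref{item:orbitsinfiniteor1}, I would again work with the measure $\mu'\in\mathrm{IRE}_{\LL'}(H)$ and argue that for a.e.\ structure the orbits of $\Aut(\M)\cap H$ on $\N$ are of size $1$ or infinite. Suppose not: then there is a finite $k\geq 2$ and a positive-measure set of structures $\M$ admitting an $\Aut(\M)_{\x}$-orbit of size exactly $k$. Enlarging $\x$ by one more point $y\notin A$ and iterating the argument of part~\ref{item:fixptsIRM}, an orbit of $\Aut(\M)_{\x}$ through $y$ of size $k$ means that $y$ lies in $\Fix(\Aut(\M)_{\x\,y'})$ for each of the $k$ points $y'$ in that orbit, i.e.\ the fixed-point set after naming one more point jumps by a finite (namely $k-1$) amount; but part~\ref{item:fixptsIRM} (applied to the tuple $\x\,y$ in place of $\x$) says this finite jump is $\mu$-a.s.\ impossible unless it is empty. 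So a.s.\ every orbit of $\Aut(\M)_{\x}$ on $\N\setminus(\Fix(\Aut(\M))\cup\x)$ is infinite, and the points of $\Fix(\Aut(\M))\cup\x$ have orbits of size $1$; this is exactly the claimed dichotomy. (Alternatively and more directly: fix $y\notin A$, and apply part~\ref{item:fixptsIRM} to the tuple $\x y$; the orbit of $y$ under $\Aut(\M)_{\x}$ is finite iff $y\in\Fix(\Aut(\M)_{\x z})$ for every $z$ in its orbit, and comparing fixed-point sets for $\x y$ and for $\x$ forces finiteness to collapse to a singleton. One then takes a union over the countably many choices of $y$.)

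The main obstacle I anticipate is bookkeeping the measurability and the ``finite jump'' exclusion cleanly: one needs that the maps $\M\mapsto\Fix(\Aut(\M))$ and $\M\mapsto(\text{orbit of }y\text{ under }\Aut(\M)_{\x})$ are Borel (this uses the measurability of $\M\mapsto\Aut(\M)$, which is Lemma~\ref{lem.proprietesStab}\ref{item.stabmesurable}, referenced in the paragraph preceding the statement), and one needs the Neumann-disjointness step to be run with the correct group. A secondary, purely cosmetic issue is whether to literally introduce the auxiliary group $G_A$ and language $\LL'$ or to phrase the disjointness argument directly in $G$: the direct phrasing is that for finite $B$ with $B\cap A$ fixed, the set $\{\M : \Fix(\Aut(\M)_{\x})=B\}$ is $G_A$-invariant, has positive measure if the bad event does, and $G_A$ moves $B$ to infinitely many pairwise-disjoint sets when $B\not\subseteq A$ — contradicting finiteness of measure exactly as before. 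I would present the direct version to avoid setting up $\LL'$, but the conceptual content is the reduction to Lemma~\ref{lem.fixedptinfinite} relative to the stabilizer.
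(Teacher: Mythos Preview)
There is a genuine gap in your argument for \ref{item:fixptsIRM}. What your reduction to Lemma~\ref{lem.fixedptinfinite} (run relative to $H=G_A$) actually yields is that $\Fix(\Aut(\M)_{\x})\setminus A$ is a.e.\ empty or infinite, and with a bit more care one can similarly obtain that $\Fix(\Aut(\M)_{\x})\setminus(\Fix(\Aut(\M))\cup\x)$ is a.e.\ empty or infinite. But \ref{item:fixptsIRM} asserts that this difference is a.e.\ \emph{empty}, and nothing in your argument excludes the case where it is infinite. You write that what remains is to ``rule out that $\Fix(\Aut(\M)_{\x})$ properly exceeds $\Fix(\Aut(\M))\cup\x$ \dots\ by a \emph{finite} amount'' --- but finite excess is precisely what your Neumann argument already excludes; it is the \emph{infinite} excess that needs a separate idea, and a disjointness argument applied to the set $\Fix(\Aut(\M)_{\x})$ (or to the events $\{\Fix(\Aut(\M)_{\x})=B\}$) simply cannot see it.

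The paper's proof supplies the missing idea. Suppose $a\notin\x$ lies in $\Fix(\Aut(\M)_{\x})\setminus\Fix(\Aut(\M))$ with positive probability. Since $a\notin\Fix(\Aut(\M))$, there is some $g\in\Aut(\M)$ with $g(a)\neq a$; fix $\y=g(\x)$. The key observation is that on the event $\{a\in\Fix(\Aut(\M)_{\x})\}$, the value $g(a)$ depends only on $g(\x)$ (any two automorphisms agreeing on $\x$ differ by an element of $\Aut(\M)_{\x}$, which fixes $a$). So there is a well-defined image $b$ of $a$ attached to each $\M$ on this event, and the Neumann disjointness argument is run on the events indexed by $b$, using that the $G_{(\x,\y,a)}$-orbit of any $b\notin(\x,\y,a)$ is infinite. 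This interaction between the full group $\Aut(\M)$ and its subgroup $\Aut(\M)_{\x}$ is exactly what your passage to $H=G_A$ loses. Your argument for \ref{item:orbitsinfiniteor1} is essentially the paper's (adjoin all but one point of a putative finite orbit and invoke \ref{item:fixptsIRM}), so once \ref{item:fixptsIRM} is fixed you are fine there. A minor aside: you write ``$H$ has no algebraicity and $\Fix(H)=A$'', which is self-contradictory when $A\neq\emptyset$; what you mean (and use) is that $G_A$ acts with infinite orbits on $\N\setminus A$.
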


\begin{proof}
     Let us prove \ref{item:fixptsIRM}. It is clear that $\mu$-a.s., we have $\Fix(\Aut(\M))\cup \x\subseteq \Fix(\Aut(\M)_{\x})$. We now prove the reverse inclusion. Assume by contradiction that there exists $a\in\N\setminus\x$ such that \[\mu(\{ \Fix(\Aut(\M)_{\x})\setminus\Fix(\Aut(\M))\text{ contains }a\})>0.\]
     Therefore, there exists $\y\in\N^{<\omega}$ such that
     \[\mu(\{\Fix(\Aut(\M)_{\x})\text{ contains }a\text{ and }\exists g\in\Aut(\M)\colon g(\x)=\y, g(a)\neq a\}) >0.\]
     Conditionally on the set $\{\Fix(\Aut(\M)_{\x})\text{ contains }a\}$, we have that for $\mu$-a.e.\ $\M$, for all $g,h\in\Aut(\M)$, if $g(\x)=h(\x)=\y$ then $g(a)=h(a)$. If we denote by $\z$ the tuple $(\x,\y,a)$, then for all $b\in\N\setminus\z$, the $\mu$-measure of the set
     \[\{\Fix(\Aut(\M)_{\x})\text{ contains }a\text{ and for any }g\in\Aut(\M)\text{ s.t. }g(\x)=\y, g(a)=b\}\]
     is strictly positive and constant along the $G_{\z}$-orbit of $b$. These orbits are infinite since $G$ has no algebraicity, and the former sets are pairwise disjoint, which is a contradiction. 

     Let us now prove \ref{item:orbitsinfiniteor1}. Assume by contradiction that there exists a tuple $\y=(y_1,\dots,y_n)$ of size $n\geq 2$ such that 
     $\mu(\{\y \text{ is an orbit of }\Aut(\M)_{\x}\})>0$. Let $\z\coloneqq(\x,y_1,\dots,y_{n-1})$. Then $\mu(\{\Fix(\Aut(\M)_{\z})\setminus\Fix(\Aut(\M))\text{ contains }y_n\})>0$, which contradicts \ref{item:fixptsIRM}.
 \end{proof}

The following result is a version of Neumann's lemma for IRE. 

\begin{lemma}\label{Lem:notFix} Assume that $G$ has no algebraicity and  let $\mu\in\mathrm{IRE}(G)$. Let $\x\in\N^{<\omega}$. Then for $\mu$-a.e.\ $\M\in\Struc_\LL^G$, either $\x$ contains an element of $\Fix(\Aut(\M))$ or the $\Aut(\M)$-orbit of $\x$ contains an infinite set of pairwise disjoint tuples.

\end{lemma}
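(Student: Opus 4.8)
The plan is to derive the statement as a $\mu$-almost-sure, pointwise application of the classical Neumann lemma \cite[Cor.~4.2.2]{Hodge} to the groups $\Aut(\M)$, feeding in the orbit dichotomy already established in Lemma~\ref{Lem:NoAlg}.

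Concretely, I would first apply Lemma~\ref{Lem:NoAlg}~\ref{item:orbitsinfiniteor1} with the empty tuple $\x=\emptyset$, so that $\Aut(\M)_{\emptyset}=\Aut(\M)$: this gives that for $\mu$-a.e.\ $\M\in\Struc_\LL^G$, every orbit of the action $\Aut(\M)\curvearrowright\N$ is either a singleton or infinite, and the singleton orbits are exactly the points of $\Fix(\Aut(\M))$. Now fix such an $\M$, write $\x=(x_1,\dots,x_n)$, and suppose that the $\Aut(\M)$-orbit of $\x$ does \emph{not} contain an infinite set of pairwise disjoint tuples. Since a family of tuples is pairwise disjoint precisely when the corresponding family of underlying finite subsets of $\N$ is pairwise disjoint, the finite set $\{x_1,\dots,x_n\}$ then has only finitely many pairwise disjoint $\Aut(\M)$-translates; by Neumann's lemma, in the contrapositive form already used to prove Lemma~\ref{lem.fixedptinfinite} (a finite subset of $\N$ all of whose points have infinite orbit admits infinitely many pairwise disjoint translates), some entry $x_i$ must lie in a finite $\Aut(\M)$-orbit. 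By the dichotomy above this orbit is a singleton, i.e.\ $x_i\in\Fix(\Aut(\M))$, so $\x$ contains an element of $\Fix(\Aut(\M))$. This is the first alternative, and it settles the lemma.

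I do not expect a genuine obstacle here; the proof is a short deduction. The two points deserving a word of care are, first, that one really does need Lemma~\ref{Lem:NoAlg} and cannot argue merely from ``$\x$ avoids $\Fix(\Aut(\M))$'': an entry of $\x$ sitting in a finite orbit of size $\geq 2$ would otherwise both avoid the fixed set and obstruct the existence of infinitely many disjoint translates of $\x$; and second, the measurability in $\M$ of the event ``the $\Aut(\M)$-orbit of $\x$ contains an infinite set of pairwise disjoint tuples'', which, as for the other statements in this section, we treat as routine given the measurability of $\M\mapsto\Aut(\M)$ from Lemma~\ref{lem.proprietesStab}~\ref{item.stabmesurable}.
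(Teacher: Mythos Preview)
Your proof is correct and follows essentially the same route as the paper's: invoke Lemma~\ref{Lem:NoAlg} to get the size-$1$-or-infinite dichotomy for orbits, then apply Neumann's lemma \cite[Cor.~4.2.2]{Hodge}. The only cosmetic difference is that you instantiate Lemma~\ref{Lem:NoAlg} with the empty tuple (so the dichotomy is for $\Aut(\M)$-orbits, which is exactly what Neumann's lemma needs), whereas the paper's one-line proof states it for $\Aut(\M)_{\x}$; your version is if anything the cleaner one.
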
 

\begin{proof}
We know by Lemma \ref{Lem:NoAlg} that $\mu$-a.s., the orbits of $\Aut(\M)_{\x}$ are either of size $1$ or infinite. Then Neumann's lemma \cite[Cor.~4.2.2]{Hodge} gives us the desired conclusion. 
\end{proof}

\subsection{IREs without fixed point}

Given a closed subgroup $G\leq S_\infty$, we say that $\mu\in\mathrm{IRE}(G)$ has \defin{no fixed point} if for $\mu$-a.e.\ $\M\in\Struc_\LL^G$, the set $\Fix(\Aut(\M))$ is empty.

\begin{lemma}\label{lem.typespurelyatomic}
   Let $G\leq S_\infty$ be a closed subgroup, which has no algebraicity. Let $\mu\in\mathrm{IRE}(G)$ be dissociated. Let $F$ be a countable fragment. If $\mu$ has no fixed point, then for all $\x\in\N^n$, the probability measure $\tp_F(\x)_*\mu$ on $S^n_F$ is purely atomic. 
\end{lemma}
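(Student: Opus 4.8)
The plan is to exploit the dynamically de Finetti hypothesis in the form of conditional independence of the $\sigma$-algebras $\FF_A$, together with ergodicity, to force the pushforward measure $\tp_F(\x)_*\mu$ to be a Dirac mass conditionally on a suitable sub-$\sigma$-algebra, and then to run a de Finetti-type symmetry argument à la Theorem \ref{thm:mixediid}. First I would set up the measurable map $T\colon \Struc_\LL^G\to S^n_F$, $T(\M)=\tp_F^\M(\x)$, which is Borel by Lemma \ref{lem.typefacts}\ref{item.tpBorel}, and observe via Lemma \ref{lem.typefacts}\ref{item.tpinvariant} that $T(g\cdot\M)=\tp_F^{g\cdot\M}(\x)$ depends only on $g(\x)$; in particular $T$ is $G_{\x}$-invariant, so $T$ is $\FF_{\x}$-measurable (after the usual completion). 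The key point is that $S^n_F$ is a compact metrizable space, so $\tp_F(\x)_*\mu$ is purely atomic if and only if for $\mu$-a.e.\ $\M$ the type $\tp_F^\M(\x)$ is an atom of this measure, equivalently $\mu(\{\mathbf N : \tp_F^{\mathbf N}(\x) = \tp_F^\M(\x)\})>0$.

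Next I would choose, for each finite tuple $\y$ with the same quantifier-free type as $\x$ (i.e.\ each $\y$ in the $G$-orbit of $\x$), the type map $T_\y(\M) = \tp_F^\M(\y)$, which is $\FF_\y$-measurable. Since $G$ has no algebraicity and $\mu$ has no fixed point, Lemma \ref{Lem:notFix} tells us that $\mu$-a.s.\ the $\Aut(\M)$-orbit of $\x$ contains infinitely many pairwise disjoint tuples $\x=\x_0,\x_1,\x_2,\dots$; by Lemma \ref{lem.typefacts}\ref{item.memeorbitememetype} all of these have the same $F$-type in $\M$, namely $T(\M)$. Now pick two disjoint tuples $\y,\y'$ in the $G$-orbit of $\x$ with $\y\cap\y'=\emptyset$; then $A=\y$, $B=\y'$ have $A\cap B=\emptyset$, so dynamical de Finetti gives $\FF_\y\indep\FF_{\y'}$, hence the random variables $T_\y$ and $T_{\y'}$ are independent under $\mu$. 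But by transitivity of $G$ on the orbit and $G$-invariance of $\mu$, $T_\y$ and $T_{\y'}$ have the same law, namely $\tp_F(\x)_*\mu$; and on the event from Lemma \ref{Lem:notFix} (which is conull) the realized values along the disjoint tuples $\x_0,\x_1,\dots$ coincide. The slogan is: we have an i.i.d.-looking sequence of $S^n_F$-valued random variables that is almost surely constant, which is only possible if the common law is a Dirac mass — but here the common law need not be Dirac because different $\M$ give different constant values, so the correct conclusion is rather that conditionally on $\FF_\emptyset$ (which is trivial by ergodicity) the law is an average of Diracs.

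More precisely, I would argue as follows. Fix a countable basis $(\phi_k)$ of clopen sets of $S^n_F$ coming from formulas $\phi_k\in F$. For each $k$, the set $U_k=\{\M : \M\models\phi_k(\x)\}$ is $\FF_{\x}$-measurable, and $g\cdot U_k = \{\M : \M\models\phi_k(g(\x))\}$. For two disjoint conjugates $\y,\y'$ of $\x$, conditional independence $\FF_\y\indep_{\FF_\emptyset}\FF_{\y'}$ plus triviality of $\FF_\emptyset$ (ergodicity) yields that $\mathds{1}_{U^{\y}_k}$ and $\mathds{1}_{U^{\y'}_\ell}$ are independent for all $k,\ell$, where $U^{\y}_k=\{\M:\M\models\phi_k(\y)\}$. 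On the conull set where the $\Aut(\M)$-orbit of $\x$ contains infinitely many pairwise disjoint tuples $(\x_j)$, we have $\mathds{1}_{U^{\x_j}_k}(\M)=\mathds{1}_{U_k}(\M)$ for all $j$; averaging over $j$ and using a law-of-large-numbers / $0$–$1$ argument for pairwise-independent identically distributed indicators shows $\mathds{1}_{U_k}$ is $\mu$-a.s.\ equal to its expectation, i.e.\ $\mu(U_k)\in\{0,1\}$ for every $k$. Since the $\phi_k$ separate points of $S^n_F$, this forces $\tp_F(\x)_*\mu$ to be concentrated on a single point, hence Dirac, which is in particular purely atomic. (If one prefers not to invoke ergodicity to kill $\FF_\emptyset$, one runs the same argument on each ergodic component of the ergodic decomposition and gets that $\tp_F(\x)_*\mu$ is a countable mixture of Diracs, still purely atomic; but since the statement assumes $\mu$ ergodic, the clean Dirac conclusion is available.)

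The main obstacle I anticipate is the measure-theoretic bookkeeping around the conditional independence: one must be careful that $T_\y$ is genuinely $\FF_\y$-measurable rather than merely invariant under $G_\y$ up to null sets, and that "$\FF_\y\indep_{\FF_\emptyset}\FF_{\y'}$ implies independence of the generating indicator variables" is applied correctly (this is immediate from the definition of conditional independence in Section \ref{sec.deFinetti} applied to $\FF_\y$-measurable bounded functions, taking $\GG=\FF_\emptyset$, $\HH=\FF_{\y'}$). A secondary subtlety is ensuring that Lemma \ref{Lem:notFix} gives disjoint \emph{conjugate} tuples all lying in a single $G$-orbit, so that transitivity genuinely equalizes the laws of $T_\y$ and $T_{\y'}$ — but this is exactly what that lemma provides, since the $\Aut(\M)$-orbit of $\x$ is contained in the $G$-orbit of $\x$. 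Modulo these points the argument is a direct transcription of the de Finetti mechanism already used in the proof of Theorem \ref{thm:mixediid}.
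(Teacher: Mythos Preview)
Your setup is correct and mirrors the paper's: $\tp_F(\x)$ is $\FF_{\x}$-measurable, for disjoint $\y,\y'$ in the $G$-orbit of $\x$ the variables $T_{\y},T_{\y'}$ are independent and identically distributed, and Lemma~\ref{Lem:notFix} supplies, for $\mu$-a.e.\ $\M$, disjoint tuples in $\Aut(\M)\cdot\x$ all sharing the $F$-type of $\x$. The gap is in the last step, and it leads you to a conclusion that is too strong and in fact false. You claim $\mu(U_k)\in\{0,1\}$ for every basic clopen, hence that $\tp_F(\x)_*\mu$ is a Dirac mass. But take $G=S_\infty$ and $\mu$ the Erd\H{o}s--R\'enyi random graph IRE with edge probability $1/2$: this $\mu$ is ergodic and has no fixed point (a.s.\ $\M$ is a copy of the Rado graph, whose automorphism group is transitive on $\N$), yet for $\x=(0,1)$ and $F=\LL_{\omega,\omega}$ the pushforward is $\tfrac12\delta_{p}+\tfrac12\delta_{q}$, where $p,q$ are the two $2$-types (edge and non-edge). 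This is purely atomic but certainly not Dirac.

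The error is in the law-of-large-numbers step. Independence holds for the family $(T_{\y})_{\y}$ indexed by \emph{deterministic} disjoint tuples $\y$, whereas the tuples $\x_j$ furnished by Lemma~\ref{Lem:notFix} are \emph{random}: they depend on $\M$. Along a fixed deterministic sequence $(\y_j)_j$ the indicators $\mathds{1}_{U^{\y_j}_k}$ are i.i.d.\ but have no reason to equal $\mathds{1}_{U_k}(\M)$; along the random $(\x_j(\M))_j$ the values do equal $\mathds{1}_{U_k}(\M)$ but the independence is gone, since the selection of $\x_j$ is itself $\FF_{\x}$-entangled. Your earlier remark that ``different $\M$ give different constant values'' is exactly the obstruction, and the ``more precisely'' paragraph does not overcome it.

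The paper's proof avoids this by arguing by contradiction and using a single deterministic tuple rather than an infinite sequence. If $\tp_F(\x)_*\mu$ is not purely atomic, the set $X=\{\M:\tp_F^\M(\x)\text{ is not an atom}\}$ has positive measure. On $X$ one still has a.s.\ some disjoint $\y\in\Aut(\M)\cdot\x$ with $\tp_F^\M(\y)=\tp_F^\M(\x)$; since there are only countably many candidates, pigeonhole produces one \emph{deterministic} $\y$ disjoint from $\x$ and $X'\subseteq X$ of positive measure on which $\tp_F^\M(\x)=\tp_F^\M(\y)$. Independence of $T_{\x}$ and $T_{\y}$ together with the fact that $T_{\x}$ takes only non-atomic values on $X'$ then forces $\mu(X')=0$, a contradiction. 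The point is that a single equality event suffices, so no averaging over a random index set is ever needed.
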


\begin{proof}
    First of all, observe that by Lemma \ref{lem.typefacts} \ref{item.tpinvariant}, for all $\x\in\N^n$, the random variable $\tp_F(\x) : (\Struc_\LL^G,\mu)\to S^n_F$ is $G_{\x}$-invariant and therefore $\FF_{\x}$-measurable, where $\FF_{\x}$ denotes the $\sigma$-algebra of $G_{\x}$-invariant measurable subsets of $\Struc_\LL^G$. Since $\mu$ is dissociated, we deduce that for all $\x,\y\in\N^n$ disjoint, the random variables $\tp_F(\x)$ and $\tp_F(\y)$ are independent.

    If $\mu$ has no fixed point, then by Lemma \ref{Lem:notFix}, for $\mu$-a.e.\ $\M$, there exists $\y$ disjoint from $\x$ in the $\Aut(\M)$-orbit of $\x$. This implies that $\mu$-a.s., there is $\y\in\N^n$ disjoint from $\x$ such that $\tp_F^\M(\x)=\tp_F^\M(\y)$. Assume that $\tp_F(\x)_*\mu$ is not purely atomic. This means that there exists a measurable subset $X\subseteq\Struc_\LL^G$, $\mu(X)>0$, such that $\tp_F(\x)_*\mu(\cdot\mid X)$ is a diffuse measure. There exists $\y\in\N^n$ (deterministic) disjoint from $\x$ and a measurable subset $X'\subseteq X$ of positive measure such that for all $\M\in X'$, $\tp_F^\M(\x)=\tp_F^\M(\y)$. By independence of $\tp_F(\x)$ and $\tp_F(\y)$ and using the fact that $\tp_F(\x)_*\mu(\cdot\mid X')$ is diffuse, we get that 
    \[0=\mu(\{\M\in X'\colon \tp_F^\M(\x)=\tp_F^\M(\y)\})=\mu(X')>0,\]
    which yields a contradiction. Thus, $\tp_F(\x)_*\mu$ is purely atomic.
\end{proof}

\begin{remark}
    If $\mu$ has fixed points, then one can prove with a similar argument that the probability measure $\tp_F(\x)_*\tilde\mu$ is purely atomic, where $\tilde\mu$ is the conditional measure $\mu(\cdot\mid \{\x\cap\Fix(\Aut(\M))=\emptyset\})$.
\end{remark}

For all $n\geq 0$, we denote by $S^n_F(\mu)$ the countable set of $p\in S^n_F$ for which there exists $\x\in\N^n$ such that $\mu(\{\M\in\Struc_{\mathcal L}^G\colon \tp_F^\M(\x)=p\})>0$. In order to analyze in details IRE with no fixed points, we need the following result, a version for $S_\infty$-IRE of which is contained in \cite[Lem.~4.6]{AFKP}. The proof we present here contains no new argument compared to the proof of the aforementioned result.

\begin{theorem}\label{thm.onepointextension}
    Let $G\leq S_\infty$ be a closed subgroup, which has no algebraicity. Let $\mu\in\mathrm{IRE}(G)$ be dissociated. If $\mu$ has no fixed point, then there exists a countable fragment $F$ such that for all $p\in S^n_F(\mu)$, $q\in S^{n+1}_F(\mu)$ and $(\x,z)\in\N^{n+1}$ satisfying $\mu(\{\M\colon \tp_F^\M(\x)=p \text{ and }\tp_F^\M(\x,z)=q\})>0$,
    the following holds $\mu$-a.s.
    \[\forall \y\in \N^n, (\tp_F^{\M}(\y)=p\Rightarrow \exists z' \colon \tp_F^{\M}(\y,z')=q). \]
\end{theorem}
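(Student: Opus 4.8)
The plan is to produce the fragment $F$ by a countable closure process and then to use the conditional independence coming from "dynamically de Finetti" together with purely atomicity of pushed-forward type measures (Lemma \ref{lem.typespurelyatomic}) to transfer the one-point extension property from one tuple realizing a type $p$ to all of them. First I would observe that, since $G$ is dynamically de Finetti, for any countable fragment $F_0$ the random variables $\tp_{F_0}(\x)$ and $\tp_{F_0}(z)$ (for $\x,z$ disjoint) become, after conditioning on $\FF_{\x\cap z}=\FF_\emptyset$, i.e.\ by ergodicity, independent; more relevantly, $\tp_{F_0}(\x,z)$ and $\tp_{F_0}(\y,z)$ are conditionally independent over $\FF_z$ whenever $\x,\y$ are disjoint from each other and from $z$. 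The statement we want is essentially: the event "$q$ is realized over $\y$" depends on $\M$ only through data attached to $\y$ together with a $z$-indexed piece, and this should be forced once $\tp_F^\M(\y)=p$.

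The key steps, in order. (1) Fix an enumeration of all relevant "forbidden patterns": for each pair $p\in S^n_{F}(\mu)$, $q\in S^{n+1}_F(\mu)$ realized with positive probability by some $(\x,z)$ as in the statement, the bad event is $E_{p,q}=\{\M: \tp_F^\M(\y)=p$ and $\neg\exists z'\ \tp_F^\M(\y,z')=q\}$ for some fixed $\y$. By $G$-invariance of $\mu$ and transitivity-type considerations, $\mu(E_{p,q})$ does not depend on the particular $\y$ with $\tp_F^\M(\y)=p$ possible. We must show $\mu(E_{p,q})=0$. (2) Use Lemma \ref{lem.typespurelyatomic}: since $\mu$ has no fixed point and is ergodic, $\tp_F(\x)_*\mu$ is purely atomic, so the $n$-types and $(n+1)$-types with positive probability form countable sets $S^n_F(\mu)$, $S^{n+1}_F(\mu)$; this makes the indexing in (1) countable and lets us work type-by-type. (3) The heart: given $(\x,z)$ with $\mu(\{\tp_F^\M(\x)=p,\ \tp_F^\M(\x,z)=q\})>0$, and any other $\y$ disjoint from $\{\x\}\cup\{z\}$, by conditional independence over $\FF_z$ of the relative types at $\x$ and at $\y$ "seen from $z$", one shows that conditionally on $\tp_F^\M(\x,z)=q$ (hence $\tp_F^\M(z)=q{\restriction}z$) and $\tp_F^\M(\y)=p$, the probability that $\tp_F^\M(\y,z)=q$ is positive; then, using that $\mu$ has no fixed point so that $\Aut(\M)\cdot z$ meets infinitely many pairwise disjoint singletons (Lemma \ref{Lem:notFix}), one finds such a $z'$ inside $\Aut(\M)$-orbit of $z$ for $\mu$-a.e.\ $\M$ with $\tp_F^\M(\y)=p$ — a Borel–Cantelli / positivity-of-conditional-probability argument along the infinitely many disjoint translates of $z$. (4) Finally, to get a single fragment $F$ that works simultaneously for all $p,q,n$: run a back-and-forth-style closure, alternately enlarging the fragment to include, for each type realized so far, formulas witnessing the one-point extensions, and iterate $\omega$ times; the union is still countable and is the desired $F$. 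This mirrors \cite[Lem.~4.6]{AFKP}.

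The main obstacle, and where I would spend the most care, is step (3): getting from "positive conditional probability that some translate $z'$ of $z$ realizes $q$ over $\y$" to "$\mu$-a.s.\ such a $z'$ exists". The naive independence of $\tp_F(\y,z_1),\tp_F(\y,z_2),\dots$ over the $z_i$ fails because they all share the coordinate block at $\y$; the correct statement is conditional independence over $\FF_\y$ (equivalently over $\tp_F(\y)$ once types are atomic), which is exactly what dynamically de Finetti supplies via $\FF_{\{z_i\}\cup\y}\indep_{\FF_\y}\FF_{\{z_j\}\cup\y}$ after intersecting appropriately — but one must be careful that the $z_i$ are pairwise disjoint and disjoint from $\y$, which is guaranteed on a conull set by Lemma \ref{Lem:notFix}, and that the conditioning sets have positive measure, which is guaranteed by pure atomicity. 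Once conditional independence over $\FF_\y$ is in hand, the events "$z_i$ does not witness $q$ over $\y$" are conditionally i.i.d.\ with probability $<1$, so conditionally their infinite conjunction has probability $0$, giving the claim. A secondary subtlety is checking that $\mu(E_{p,q})$ is genuinely independent of the choice of $\y$, which follows from Lemma \ref{lem.typefacts}\ref{item.tpinvariant} together with $G$-invariance of $\mu$ and the fact that any two tuples with the same quantifier-free type (hence the same $F$-type, $F$ containing the canonical language) lie in the same $G$-orbit; here one uses that $G$ has no algebraicity so such tuples are abundant.
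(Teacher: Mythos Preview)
Your step (3) contains a genuine gap. You write that conditional independence holds ``over $\FF_\y$ (equivalently over $\tp_F(\y)$ once types are atomic)'', but this equivalence is false: the $\sigma$-algebra $\FF_\y$ of $G_\y$-invariant events is strictly larger than $\sigma(\tp_F(\y))$. In particular, the very event you care about,
\[
E_\y\coloneqq\{\M:\exists z'\ \tp_F^\M(\y,z')=q\},
\]
is $G_\y$-invariant and hence lies in $\FF_\y$. So once you condition on $\FF_\y$, whether $E_\y$ holds is already decided. Concretely, on $E_\y^c$ each event $A_i=\{\tp_F^\M(\y,z_i)=q\}$ fails deterministically, so $\mu(A_i\mid\FF_\y)=0$ there; your Borel--Cantelli step needs $\mu(A_i\mid\FF_\y)>0$ a.s.\ on $\{\tp_F^\M(\y)=p\}$, which is exactly the assertion $\mu(E_\y^c\cap\{\tp_F^\M(\y)=p\})=0$ that you are trying to prove. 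The argument is circular. The earlier variant you sketch (condition on $\FF_z$ and use independence of $\tp_F(\x,z)$ and $\tp_F(\y,z)$) has the same defect: you are conditioning on an event containing $\tp_F^\M(\y)=p$, not on $\FF_z$, and the extra $\y$-information cannot be absorbed into an $\FF_z$-conditioning.

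The paper's proof avoids this altogether and does not use conditional independence at this stage. It builds a transfinite tower of countable fragments $F_0\subseteq F_1\subseteq\cdots$ where $F_{\alpha+1}$ is generated over $F_\alpha$ by the single formulas $\bigwedge_{\phi\in p}\phi$ for each $p\in S^n_{F_\alpha}(\mu)$ (countably many, by Lemma~\ref{lem.typespurelyatomic}). A separate stabilization lemma produces $\alpha<\omega_1$ at which no $F_\alpha$-type of positive measure splits when passing to any $F_\beta$, $\beta>\alpha$. With $F=F_\alpha$, the sentence $\psi(\bar v)\coloneqq\exists v\,\bigwedge_{\phi\in q}\phi(\bar v,v)$ lives in $F_{\alpha+1}$; stabilization forces the unique $F_{\alpha+1}$-extension $r$ of $p$ to satisfy $\mu(\tp_{F_{\alpha+1}}(\x)=r)=\mu(\tp_{F_\alpha}(\x)=p)$, and since $\psi(\x)$ holds on a positive-measure subset of $\{\tp_{F_\alpha}(\x)=p\}$ we must have $\psi\in r$. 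Thus a.s.\ $\tp_{F_\alpha}^\M(\y)=p$ implies $\M\models\psi(\y)$, which is the one-point extension. Note also that your closure in step~(4) over $\omega$ steps is not enough; the stabilization ordinal can be any countable ordinal, and the argument genuinely uses the transfinite tower.
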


\begin{proof}
    Let $\omega_1$ be the first uncountable ordinal. We will construct a family of countable fragments $(F_\alpha)_{\alpha<\omega_1}$ depending on $\mu$, indexed by countable ordinals and show that for some ordinal $\alpha<\omega_1$, the countable fragment $F_\alpha$ is as wanted. We define $F_\alpha$ by transfinite induction:
    \begin{align*}
        F_0&=\LL_{\omega,\omega},\\
        F_{\alpha+1} &=\Big\langle F_\alpha, \bigwedge_{\phi\in p} \phi(v_1,\dots,v_n) \text{ for all }p\in S^n_{F_\alpha}(\mu) \text{ and }n\geq 0\Big\rangle,\\
        F_{\beta}&=\bigcup_{\alpha<\beta}F_\alpha \text{ if }\beta\text{ is a limit ordinal}.
    \end{align*}
    For all $n\geq 0$ the set $S^n_F(\mu)$ is countable, therefore $F_\alpha$ is a countable fragment for all ordinal $\alpha$. 
    \begin{lemma}
          There is an ordinal $\alpha $ such that for $\alpha<\beta<\omega_1$, all $\x\in \N^n$ and $r\in S^n_{F_{\beta}}(\mu)$, if $\mu(\{\M\colon \tp_{F_\beta}^\M(\x)=r\})>0$, then 
\begin{align}\label{eq.stabilize}\mu(\{\M\colon \tp_{F_\beta}^\M(\x)=r\})=\mu(\{\M\colon \tp_{F_\alpha}^\M(\x)=s\})
\end{align}
where $s\subseteq r$ denotes the restriction of $r$ to the fragment $F_\alpha$. 
    \end{lemma}

    \begin{proof}
        We reproduce the argument from the proof of \cite[Lem.~4.5]{AFKP}. Fix $\x\in \N^n$. For $\alpha<\omega_1$ an ordinal, let us denote by $\mathrm{Sp}(\alpha)(\x)$ the set of $p\in S^n_{F_\alpha}(\mu)$ such that there exists $\beta>\alpha$ and $q\in S^n_{F_\beta}(\mu)$ whose restriction to the fragment $F_\alpha$ is $p$, satisfying
    \begin{align*}0<\mu(\{\M\colon \tp_{F_\beta}^\M(\x)=q\})<\mu(\{\M\colon \tp_{F_\alpha}^\M(\x)=p\}).
\end{align*}
Assume that for all $\alpha<\omega_1$, $\mathrm{Sp}(\alpha)(\x)$ is non-empty. We construct a sequence $(\alpha_\delta)_{\delta<\omega_1}$ of ordinals and we prove that $r_{\alpha_\delta}(\x)\coloneqq\sup_{p\in\mathrm{Sp}(\alpha)(\x)}\mu(\{\M\colon \tp_{F_{\alpha_\delta}}^\M(\x)=p\})$ is a strictly decreasing sequence of reals of lenght $\omega_1$, which can not exist.

Assume $\delta=\gamma +1 $ and that we have constructed $\alpha_\gamma$. Then $r_{\alpha_{\gamma}}$ is realized by a finite number of types $p_1,\ldots,p_k\in \mathrm{Sp}(\alpha_\gamma)(\x)$. Let us take $\beta>\alpha_\gamma$ such that for all $i\leq k$ there is $q_i\in S^n_{F_\beta}(\mu)$ whose restriction to $F_{\alpha_\gamma}$ is $p_i$ and satisfies
\begin{align*}0<\mu(\{\M\colon \tp_{F_\beta}^\M(\x)=q_i\})<\mu(\{\M\colon \tp_{F_{\alpha_\gamma}}^\M(\x)=p_i\}).
\end{align*}
We set $\alpha_{\delta}=\beta$. We have $r_\beta < r_{\alpha_\gamma}$, indeed if $q$ realizes $r_\beta$, take $p $ its restriction to $F_{\alpha_\gamma}$. If $p\in \{p_1,\ldots,p_k\}$ then we are done by definition of $\beta$ and if not, then $\mu(\{\M\colon \tp_{F_\beta}^\M(\x)=q\})\leq \mu(\{\M\colon \tp_{F_{\alpha_\gamma}}^\M(\x)=p\}) <r_{\alpha_\gamma}$.

If $\delta$ is a limit ordinal, then set $\alpha_\delta = \sup_{\gamma <\delta} \alpha_\gamma$. If $\gamma <\delta$ then $\gamma+1<\delta$ and $r_{\alpha_\gamma}<r_{\alpha_{\gamma+1}}\leq r_\delta$.

This is enough to conclude that there must be $\alpha(\x)$ such that $\mathrm{Sp}(\alpha)(\x)$ is empty. Take $\alpha=\sup_{\x\in \N^{<\omega}} \alpha(\x)$, then $F_\alpha$ is as wanted.
    \end{proof}
   Let us show that $F\coloneqq F_\alpha$ is a countable fragment satisfying the conclusion of the theorem. Fix $p\in S^n_F(\mu)$, $q\in S^{n+1}_F(\mu)$ and $(\x,z)\in\N^{n+1}$ satisfying $\mu(\{\M\colon \tp_{F_\alpha}^\M(\x)=p \text{ and }\tp_{F_\alpha}^\M(\x,z)=q\})>0$. Let us define 
\[\psi (v_1,\dots,v_n)\coloneqq \exists v,\bigwedge_{\phi\in q}\phi(v_1,\dots,v_n,v).\]  Since $q\in S^{n+1}_{F_\alpha}(\mu)$, the formula $ \bigwedge_{\phi\in q}\phi(v_1,\dots,v_{n+1})$ belongs to $F_{\alpha+1}$ Therefore, $\psi\in F_{\alpha+1}$. Since $\mu$ has no fixed, we can fix by Lemma \ref{lem.typespurelyatomic}, a type $r\in S^n_{F_{\alpha+1}}$ which is an atom of the probability measure $\tp_{F_{\alpha+1}}(\x)_*\tilde\mu$, where $\tilde\mu$ is the conditional measure $\mu(\cdot\mid \tp_{F_\alpha}^\M(\x)=p)$. Then $\mu(\{\M\colon\tp_{F_{\alpha+1}}^\M(\x)=r\})>0$ and the restriction of $r$ to the fragment $F_\alpha$ is exactly $p$. By equation \eqref{eq.stabilize}, we therefore get that $\mu(\{\M\colon \tp_{F_\alpha}^\M(\x)=p\})=\mu(\{\M\colon \tp_{F_{\alpha+1}}^\M(\x)=r\})$. Let us prove that $\psi\in r$. For any $\M$ such that $\tp_{F_{\alpha}}^\M(\x)=p$ and $\tp_{F_\alpha}^\M(\x,z)=q$ (which is a set of positive measure by assumption), we have that $\M\models \psi(\x)$. This shows that $\psi\in r$ as otherwise we would have  $\mu(\{\M\colon \tp_{F_\alpha}^\M(\x)=p\})>\mu(\{\M\colon \tp_{F_{\alpha+1}}^\M(\x)=r\})$. To conclude, we obtain that $\mu$-a.s., if $\tp_{F_{\alpha}}^\M(\x)=p$, then there exists $z'\in\N$ such that $\tp_{F_{\alpha}}^\M(\x,z')=q$. The same conclusion holds for any $\y\in\N^n$ by $G$-invariance of the measure $\mu$. 
\end{proof}

We can now prove the main theorem of this section. An invariant random expansion $\mu\in\mathrm{IRE}(G)$ is \defin{concentrated on an orbit} if there exists an orbit $O$ of $G\curvearrowright\Struc_\LL^G$ such that $\mu(O)=1$. This definition is legitimate as orbits of Borel actions are indeed Borel, see \cite[Thm.~15.14]{KechrisCDST1995}. The following lemma will be useful to prove that p.m.p.\ actions are essentially free. 

\begin{lemma}\label{Lem.ConcentratedOnOrbit}
    Let $G$ be a Polish group and $G\actson (X,\mu)$ be a p.m.p ergodic action. If for $\mu \otimes \mu$-a.e.\ $(x,y)\in X\times X$, $x$ and $y$ are in the same orbit, then there is a conull orbit of the action.
\end{lemma}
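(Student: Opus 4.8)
The plan is to use a standard disintegration/ergodicity argument together with Fubini. First I would invoke the ergodic decomposition only implicitly: since $G\actson(X,\mu)$ is already ergodic, I want to show that the orbit equivalence relation $E\subseteq X\times X$ has a class of full measure. Consider the Borel function $f:X\to[0,1]$ which to $x$ assigns the $\mu$-measure of its orbit closure, or more precisely I would work with the $\sigma$-algebra of $E$-invariant sets. The hypothesis says $(\mu\otimes\mu)(E)=1$, i.e.\ for $\mu$-a.e.\ $x$, the orbit $G\cdot x$ is $\mu$-conull — wait, that is not quite immediate, so let me be careful: by Fubini applied to the Borel set $E$, for $\mu$-a.e.\ $x\in X$ the section $E_x=\{y:(x,y)\in E\}=G\cdot x$ has $\mu(G\cdot x)=1$. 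So there is a $\mu$-conull set $X_0$ such that every $x\in X_0$ has a conull orbit.

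Now I would argue that having one point with a conull orbit already suffices. Fix $x\in X_0$; then $G\cdot x$ is a Borel (by \cite[Thm.~15.14]{KechrisCDST1995}, orbits of Borel actions are Borel) $G$-invariant set of full measure. That is literally the statement that the action is essentially transitive, i.e.\ there is a conull orbit. So in fact the only real content is the Fubini step plus Borelness of orbits; ergodicity is not even needed for this direction, though it is harmless to keep it in the statement (and it guarantees the hypothesis is not vacuous). I would phrase the write-up so that the Borel set $E$ is first checked to be genuinely Borel — this follows because $E=\bigcup_{g\in D}\{(x,g\cdot x)\}$ for a countable dense $D\subseteq G$ is not correct in general, but $E$ is the orbit equivalence relation of a Borel action of a Polish group, which is analytic; however, since each orbit is Borel, $E$ is in fact the increasing union over the Borel orbit-classes and one checks $E$ is Borel or simply uses that $E$ is $\mu\otimes\mu$-measurable (being analytic, hence universally measurable), which is all Fubini requires.

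\textbf{Main obstacle.} The subtle point is the measurability of $E$ for the application of Fubini: orbit equivalence relations of Polish group actions are analytic but not always Borel. The clean fix is to note analytic sets are universally measurable, so $(\mu\otimes\mu)(E)=1$ makes sense and Fubini for the completed product measure applies, yielding the a.e.\ section statement; alternatively one restricts to an invariant conull Borel set on which the action is Borel-reducible to one with Borel orbit equivalence relation. I expect essentially no difficulty beyond stating this carefully. After that, picking any $x$ with $\mu(G\cdot x)=1$ and citing \cite[Thm.~15.14]{KechrisCDST1995} for the Borelness of $G\cdot x$ finishes the proof: $G\cdot x$ is the desired conull orbit.
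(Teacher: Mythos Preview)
Your proposal is correct and follows essentially the same Fubini argument as the paper: both compute $1=(\mu\otimes\mu)(E)=\int_X\mu(G\cdot x)\,d\mu(x)$ and extract a conull orbit. You are in fact slightly more careful than the paper on two points: you justify the measurability of $E$ (analytic, hence universally measurable), which the paper leaves implicit; and you observe that ergodicity is unnecessary, since a $[0,1]$-valued function with integral $1$ must equal $1$ almost everywhere --- the paper instead invokes ergodicity to force $\mu(G\cdot x)\in\{0,1\}$, which is correct but redundant.
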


\begin{proof}
    If we denote by $E$ the set of $(x,y)$ such that $x\in G\cdot y$, we have
 \begin{align*}
    1=\mu\otimes \mu(E)&=\int_{X}\left(\int_{X}\mathds{1}_{G\cdot x}(y)d\mu(y)\right)d\mu(x)\\ 
    &=\int_{X}\mu(G\cdot x)d\mu(x),
\end{align*}
By ergodicity of $\mu$, for a.e.\ $x\in X$, the value of $\mu(G\cdot x)$ is either $0$ or $1$. Therefore, there exists an orbit $O$ of the action $G\curvearrowright X$ such that $\mu(O)=1$
\end{proof}

\begin{theorem}[version for IREs]\label{thm:NoFixIsConcOnOrbitIRE}
    Let $G\leq S_\infty$ be a closed subgroup, which has no algebraicity. Let $\mu\in\mathrm{IRE}(G)$ be dissociated. If $\mu$ has no fixed point, then $\mu$ is concentrated on an orbit. 
\end{theorem}

\begin{proof}
 For $\mu\otimes \mu$-a.e.\ $\M, \mathbf N \in\Struc_\LL^G$, we will construct an  element $g\in G $ such that $g\cdot\M =\mathbf N$, i.e., such that for all $\x\in \N^{<\omega}$, we have
  \[\mathrm{qftp}^\M(\x)=\mathrm{qftp}^{\mathbf{N}}(g(\x)).\]
This will imply that $\mu$ is concentrated on an orbit by Lemma \ref{Lem.ConcentratedOnOrbit}.

 Let us use Theorem \ref{thm.onepointextension} to build $g$ via a back-and-forth. We use the notation of Lemma \ref{lem:backandforth} and its following discussion. Take $(x_i)$ and $(y_j)$ two enumerations of $\N$. Let $F$ be a fragment given by Theorem \ref{thm.onepointextension}.

 We start our back-and-forth by defined $A_0=B_0=\emptyset$. Assume that $A_n$ and $B_n$ have been built, each containing the first $n$ elements of each enumeration respectively. Let us build $A_{n+1}$ and $B_{n+1}$. Let $i$ be the smallest index such that $x_i\notin A_n$ and $j$ the smallest index such that $y_j\notin B_n$ (by construction, $i>n$ and $j>n$). By Lemma \ref{lem.typespurelyatomic} and Theorem \ref{thm.onepointextension}, $\mu\otimes\mu$-a.s.\ there is $y\in \N$ such that 
 \[\tp_F^\M(A_n,x_i)=\tp_F^{\mathbf{N}} (B_n,y),\]
 implying that
  \[\mathrm{qftp}^\M(A_n,x_i)=\mathrm{qftp}^{\mathbf{N}} (B_n,y_i).\]
Similarly, there is $\mu\otimes\mu$-a.s. $x\in\N$ such that 
 \[\tp_F^\M(A_n,x_i,x)=\tp_F^{\mathbf{N}} (B_n,y,y_j).\]
The a.s.\ existence of $x$ is also a consequence of Theorem \ref{thm.onepointextension}. We set $A_{n+1}\coloneqq A_n\cup\{x_i,x\}$, $B_{n+1}\coloneqq B_n\cup \{y,y_j\}$ and $g_{n+1}(x_i)\coloneqq y$, $g_{n+1}(x)\coloneqq y_j$ and $g_{n+1}(a)=g_n(a)$ for any $a\in A_n$. The way we constructed $g_n$ using an enumeration ensures it converges for the pointwise topology to an element of $G$. Moreover, its limit $g$ satisfies a.s.\ 
 \[\mathrm{qftp}^\M(\x)=\mathrm{qftp}^{\mathbf{N}}(g(\x)).\qedhere\]
 \end{proof}

Let us finish this section by giving a translation of Theorem \ref{thm:NoFixIsConcOnOrbitIRE} for p.m.p.\ actions, in line with Lemma \ref{lem.dictionnary}.

\addtocounter{theorem}{-1}

\begin{theorem}[version for p.m.p.\ actions]\label{thm:NoFixIsConcOnOrbitpmp}
    Let $G\leq S_\infty$ be a closed subgroup, which has no algebraicity. Let $G\curvearrowright (X,\mu)$ be dissociated. If $\Stab(x)\curvearrowright\N$ has no fixed point for $\mu$-a.e.\ $x\in X$, then $G\curvearrowright(X,\mu)$ is essentially transitive. 
\end{theorem}

\section{Rigidity for dissociated p.m.p.\ actions}\label{sec.rigidity}

The aim of this section is to give a proof of Theorem \ref{thmintro.pmprigiddissociated}. Let us first recall the notions of essential freeness and essential transitivity for p.m.p.\ actions of Polish groups. The \defin{free part} of a p.m.p.\ action $G\curvearrowright (X,\mu)$ of a Polish group $G$ is the set $\{x\in X\colon \forall g\in G\setminus\{1_G\}, g\cdot x\neq x\}$. This is a $\mu$-measurable $G$-invariant set (see Lemma \ref{lem.proprietesStab} \ref{item.stabmesurable}). We say that $G\curvearrowright (X,\mu)$ is  

\begin{itemize}
    \item \defin{essentially free} if the free part is a conull set,
    \item\defin{essentially transitive} if there exists a conull set on which the action is transitive, 
    \item \defin{properly ergodic} if it is ergodic and every orbit has measure $0$. 
\end{itemize}

\subsection{Essentially free and essentially transitive actions}

We will provide concrete examples both of essentially free and of essentially transitive p.m.p.\ ergodic actions in Lemma \ref{lem.generalizedbernoulli}. Before that, let us prove that if $G$ is Polish non-compact, p.m.p.\ ergodic actions cannot be both. 

\begin{lemma}
    Let $G$ be a Polish non-compact group. If $G\curvearrowright (X,\mu)$ is a p.m.p.\ essentially transitive action, then it is not essentially free. 
\end{lemma}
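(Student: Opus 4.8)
The plan is to argue by contradiction: suppose $G\curvearrowright(X,\mu)$ is simultaneously essentially transitive and essentially free, and deduce that $G$ is compact. First I would exploit essential transitivity to pin down a single orbit carrying all the mass. A set on which the action is transitive is contained in one orbit; since such a set is conull by hypothesis and orbits are Borel (\cite[Thm.~15.14]{KechrisCDST1995}), there is an orbit $O=G\cdot x_0$ with $\mu(O)=1$. Essential freeness says the free part $\{x\in X\colon\forall g\in G\setminus\{1_G\},\ g\cdot x\neq x\}$ is conull, so I may take the base point $x_0$ inside $O$ and in the free part, giving $\Stab(x_0)=\{1_G\}$. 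As stabilizers along an orbit are pairwise conjugate, the action $G\curvearrowright O$ is free and transitive.

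Next I would transport $\mu$ onto $G$ itself. The orbit map $\pi\colon G\to O$, $\pi(g)=g\cdot x_0$, is continuous and $G$-equivariant when $G$ is given the left-translation action on itself, and it is bijective because $\Stab(x_0)$ is trivial. Since $O$ is a Borel subset of the standard Borel space $X$, both $G$ and $O$ are standard Borel spaces, so the Borel bijection $\pi$ is a Borel isomorphism (Lusin--Souslin). Setting $\nu\coloneqq(\pi^{-1})_*(\mu|_O)$, equivariance of $\pi$ together with $G$-invariance of $\mu$ shows that $\nu$ is a Borel probability measure on $G$ invariant under left translation: indeed $\pi(gB)=g\cdot\pi(B)$ for Borel $B\subseteq G$ and $g\in G$, whence $\nu(gB)=\mu(g\cdot\pi(B))=\mu(\pi(B))=\nu(B)$.

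Finally, by inner regularity of Borel probability measures on Polish spaces, pick a compact $K\subseteq G$ with $\nu(K)>1/2$. For every $g\in G$ we have $\nu(gK)=\nu(K)>1/2$, so $gK\cap K\neq\emptyset$, i.e.\ $g\in KK^{-1}$. Hence $G=KK^{-1}$ is compact, being the image of the compact set $K\times K$ under the continuous map $(a,b)\mapsto ab^{-1}$, contradicting non-compactness of $G$.

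The only genuinely delicate step is the identification of $(O,\mu|_O)$ with $(G,\nu)$ as $G$-spaces, that is, verifying that the orbit map is a Borel isomorphism; the rest of the argument is soft. This is precisely where the descriptive set-theoretic input (Borelness of orbits plus Lusin--Souslin) is used, and it is the reason the hypothesis ``$G$ Polish'' cannot be dropped.
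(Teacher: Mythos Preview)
Your proof is correct and follows the same route as the paper's: assume both properties, use the orbit map to transport $\mu$ to a left-translation-invariant Borel probability measure on $G$, and conclude compactness. The paper is terser, simply invoking that a Polish group carrying such a measure must be compact (``$G$ has a probability Haar measure, thus $G$ is compact''), whereas you supply this step explicitly via the $KK^{-1}$ argument; one harmless slip is calling the orbit map \emph{continuous} when all you need and use is that it is Borel.
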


\begin{proof}
    By contradiction, assume that $G\curvearrowright (X,\mu)$ is essentially free and essentially transitive. Thus, there exists a $G$-invariant conull set $X'\subseteq X$ on which the action is free and transitive. In other words, there exists a Borel probability measure $m$ on $G$ which is invariant by left translation, such that $G\curvearrowright (G,m)$ is measurably isomorphic to $G\curvearrowright (X,\mu)$. This shows that $G$ has a probability Haar measure. Thus $G$ is compact and this finishes the proof. 
\end{proof}

\begin{corollary}\label{cor.essfreeimpliesproperlyergodic}
    Any p.m.p.\ ergodic  essentially free action of a Polish non-compact group is properly ergodic. 
\end{corollary}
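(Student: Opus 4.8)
The plan is to derive the corollary directly from the preceding lemma and the definitions of the three properties introduced just above it. Recall that an action is \emph{properly ergodic} if it is ergodic and every orbit is null. So, starting from a p.m.p.\ ergodic essentially free action $G \curvearrowright (X,\mu)$ of a Polish non-compact group $G$, I would argue by contradiction: suppose it is \emph{not} properly ergodic. Since it is ergodic by hypothesis, the only way it can fail to be properly ergodic is that some orbit $O = G \cdot x_0$ has positive measure.

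The next step is to upgrade "some orbit has positive measure" to "the action is essentially transitive". By ergodicity, the $G$-invariant measurable set $O$ (orbits of Borel actions of Polish groups on standard Borel spaces are Borel, by \cite[Thm.~15.14]{KechrisCDST1995}, as already noted in the excerpt) must have measure $0$ or $1$; since $\mu(O) > 0$, we get $\mu(O) = 1$. Hence the action is transitive on a conull set, i.e.\ essentially transitive.

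Now I would invoke the immediately preceding lemma: a p.m.p.\ essentially transitive action of a Polish non-compact group is \emph{not} essentially free. This contradicts the hypothesis that $G \curvearrowright (X,\mu)$ is essentially free. Therefore no orbit has positive measure, every orbit is null, and combined with ergodicity this is exactly proper ergodicity.

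There is no real obstacle here; the only point requiring a little care is the passage from "not properly ergodic" to "essentially transitive", which relies on Borelness of orbits so that ergodicity can be applied to the invariant set $O$. Everything else is an immediate concatenation of the definitions with the previous lemma, so the corollary follows in a few lines.

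\begin{proof}
    Suppose, for contradiction, that the action $G\curvearrowright (X,\mu)$ is not properly ergodic. Since it is ergodic by assumption, there must exist an orbit $O=G\cdot x_0$ with $\mu(O)>0$. Orbits of Borel actions are Borel (see \cite[Thm.~15.14]{KechrisCDST1995}), and $O$ is $G$-invariant, so by ergodicity $\mu(O)\in\{0,1\}$; hence $\mu(O)=1$. The action is therefore transitive on the conull set $O$, i.e.\ it is essentially transitive. But then the previous lemma implies that it is not essentially free, contradicting our hypothesis. Thus every orbit is null, and since the action is ergodic, it is properly ergodic.
\end{proof}
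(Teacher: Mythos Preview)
Your proof is correct and follows exactly the intended route: the paper states this corollary without proof, leaving it as an immediate consequence of the preceding lemma and the definitions. Your argument---passing from ``not properly ergodic'' to ``some orbit has positive measure'' to ``essentially transitive'' via ergodicity, then invoking the lemma for the contradiction---is precisely what the paper has in mind.
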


We can now provides examples both of essentially free and of essentially transitive p.m.p.\ ergodic actions. 

\begin{lemma}\label{lem.generalizedbernoulli}
    Let $G\lneq S_\infty$ be a proper, closed subgroup, which has no algebraicity and is primitive. Let $(A,\kappa)$ be a standard probability space. Then the p.m.p.\ action $G\curvearrowright (A,\kappa)^{\otimes\N}$ is ergodic. If moreover $(A,\kappa)$ is purely atomic, then $G\curvearrowright (A,\kappa)^{\otimes\N}$ is essentially transitive. Else, $G\curvearrowright (A,\kappa)^{\otimes\N}$ is essentially free. 
\end{lemma}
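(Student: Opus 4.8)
The plan is to prove three separate assertions: ergodicity of the Bernoulli shift, essential transitivity in the purely atomic case, and essential freeness otherwise. For \emph{ergodicity}, I would invoke the classical fact (or prove it directly by a standard approximation argument) that a Bernoulli shift over a transitive action with no finite orbits is ergodic; more precisely, since $G$ has no algebraicity, the stabilizer $G_A$ of any finite set $A\subseteq\N$ has all orbits on $\N\setminus A$ infinite (and $\Fix(G)=\emptyset$ by Lemma~\ref{lem.GAGB}), so $G\actson\N$ has no finite orbits, and the measure $\kappa^{\otimes\N}$ is a product measure that is invariant and ``mixing along any infinite orbit'', which forces ergodicity. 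Alternatively, I can cite that dynamically de Finetti groups satisfy de Finetti-type rigidity (Theorem~\ref{thm:mixediid} is in this spirit), but the cleanest route is the direct tail/approximation computation: a $G$-invariant measurable $Y$ is approximated by a cylinder depending on finitely many coordinates $A$; pushing $Y$ by an element of $G_\emptyset$ moving $A$ off itself and using invariance plus independence of disjoint coordinates shows $\mu(Y)=\mu(Y)^2$.

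\textbf{Purely atomic case.} If $(A,\kappa)$ is purely atomic, then $\kappa$ is supported on a countable set of atoms $\{a_i\}$ with weights $p_i$. Here the natural candidate for the conull orbit is the orbit of a ``generic'' coloring $c\colon\N\to A$: namely, by the Borel--Cantelli lemma and the fact that each color class $c\inv(a_i)$ is $\kappa^{\otimes\N}$-a.s.\ infinite (when $p_i>0$) and co-infinite, the a.s.\ behaviour is that $c$ is a surjection onto the set of positive-weight atoms with every fibre infinite. I would then show, via a back-and-forth argument analogous to Lemma~\ref{lem:backandforth} and its surrounding discussion, that any two colorings $c,c'$ with this property — all fibres over positive-weight atoms infinite, no values outside the support — lie in the same $G$-orbit: one builds the bijection $g\in G$ coordinate by coordinate, at each step matching a new point $x$ with a point $y$ having $c'(y)=c(x)$, which is possible because $G$ has no algebraicity (so we have freedom to extend partial maps, the relevant quantifier-free types over the canonical structure being determined and all nontrivial orbits being infinite) and because each required color class on the target side is infinite. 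Since there is a single conull orbit, the action is essentially transitive.

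\textbf{Diffuse case.} If $\kappa$ is not purely atomic, it has a diffuse part, and I claim the free part is conull; equivalently, for $\kappa^{\otimes\N}$-a.e.\ coloring $c\colon\N\to A$ and every $g\in G\setminus\{1\}$, $g\cdot c\neq c$. Fix $g\neq 1$; then $g$ moves some point, hence (as $G$ is a group of permutations of an infinite set) $g$ moves infinitely many points, and in particular there are infinitely many pairs $(x,g(x))$ with $x\neq g(x)$ that one can choose to be ``independent'' in the sense of involving disjoint coordinates; for each such pair, $(g\cdot c)(x)=c(g\inv x)$, and the event $c(x)=c(g\inv x)$ has probability $\sum_i \kappa(\{a_i\})^2 + (\text{contribution of the diffuse part})$ — the key point being that because $\kappa$ has no atom of full mass, this probability is strictly less than $1$; taking a product over infinitely many independent such pairs gives probability $0$ that $g\cdot c=c$. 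Then I would need to pass from ``for each fixed $g$'' to ``for all $g$ simultaneously'', which is where Corollary~\ref{cor.separationGorbites} enters: it is not enough to union over a countable dense set since the action is not continuous on colorings in a way that controls all $g$, so instead I use that a nontrivial stabilizer would force a transposition-like phenomenon — more carefully, if $c$ had nontrivial stabilizer $\Aut$-type behaviour on a positive measure set, one extracts two points $x\neq y$ in the same $\Aut(c)$-orbit, but Corollary~\ref{cor.separationGorbites} provides infinitely many disjoint tuples $\z$ with $(\z,x),(\z,y)$ in different $G$-orbits, and matching colors along such $\z$ on a positive-measure set contradicts independence of disjoint coordinates exactly as in the proof of Lemma~\ref{lem.typespurelyatomic}.

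\textbf{Main obstacle.} The routine parts are ergodicity and the atomic back-and-forth. The real difficulty is the diffuse case: proving essential freeness requires ruling out \emph{all} nontrivial stabilizers at once, and the clean way to do this is to reuse the independence-of-disjoint-coordinates machinery (the dynamically de Finetti property, or here just the Bernoulli independence) together with Corollary~\ref{cor.separationGorbites} to separate $G$-orbits of one-point extensions; I expect the argument to mirror closely the proofs of Lemma~\ref{lem.typespurelyatomic} and Theorem~\ref{thm:NoFixIsConcOnOrbit}, with the coloring playing the role of the random expansion.
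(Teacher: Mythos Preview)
Your treatment of ergodicity and the purely atomic case matches the paper's: ergodicity is the standard Bernoulli argument (the paper simply cites \cite{KechrisTsankov}), and essential transitivity is a back-and-forth on two independent samples followed by Lemma~\ref{Lem.ConcentratedOnOrbit}.

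The diffuse case, however, has a genuine gap. Your first attempt (``fix $g$, use Borel--Cantelli over infinitely many moved points'') already stumbles at the outset: the assertion that any nontrivial permutation of an infinite set moves infinitely many points is simply false, and you do not justify it from the hypotheses. You rightly abandon this approach for the uncountability reason, but your second sketch --- extract $x\neq y$ in the same $\Stab(c)$-orbit, take the separating tuples $\z$ from Corollary~\ref{cor.separationGorbites}, and derive a contradiction with independence \`a la Lemma~\ref{lem.typespurelyatomic} --- does not close as written: the stabilizing element $g$ depends on $c$, so $g(\z)$ is a \emph{random} tuple, and you never explain how to reduce to a statement about two deterministic disjoint tuples having equal colors with positive probability.

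The paper's argument bypasses all of this with one clean observation you are missing. Pick $O\subseteq A$ with $\kappa(O)>0$ and $\kappa(\{a\})=0$ for every $a\in O$. Then a.s.\ the random set $a_O\coloneqq\{n\in\N:a_n\in O\}$ is infinite and the values $(a_n)_{n\in a_O}$ are pairwise distinct (diffuseness); consequently any $g$ with $g\cdot a=a$ must permute $a_O$ and, by distinctness of values, fix $a_O$ pointwise. Now Corollary~\ref{cor.separationGorbites} gives, for each pair $x\neq y$, infinitely many disjoint separating tuples $\z$ of a fixed length; by independence and Borel--Cantelli one of them a.s.\ lies entirely inside $a_O$, and then no $g$ fixing $a_O$ pointwise can send $x$ to $y$ (else $(\z,x)$ and $(\z,y)$ would share a $G$-orbit). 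This gives $\Stab(a)=\{1_G\}$ a.s.\ directly, without invoking any de Finetti-type machinery.
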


\begin{proof}
    The ergodicity of $G\curvearrowright (A,\kappa)^{\otimes\N}$ is proved verbatim in \cite[Prop.~2.1]{KechrisTsankov}, the only assumption needed is that the orbits of  $G\curvearrowright\N$ are infinite, which is weaker than having no algebraicity. Assume that $(A,\kappa)$ is purely atomic and let us prove that $G\curvearrowright (A,\kappa)^{\otimes\N}$ is essentially transitive. Let $\mu=\kappa^{\otimes\N}$. In order to prove that $G\curvearrowright (A,\kappa)^{\otimes\N}$ is essentially transitive, we will prove that for $\mu\otimes\mu$-a.e.\ $((a_n)_{n\in\N},(b_n)_{n\in\N})\in A^\N\times A^\N$, there exists a random element $g\in G$ such that $ g\cdot(a_n)_{n\in\N}=(b_n)_{n\in\N}$. For this we run a back-and forth argument.

    Fix $x_1,x_2,\dots$ and $y_1,y_2,\dots$ two deterministic enumerations of $\N$. We will inductively construct $g$, as a limit of partial bijections $g_n : A_n\to B_n$ with $g_0\subseteq g_1\subseteq \dots$, ensuring that, at the $n$-th step, $A_n$ contains $x_1,\dots,x_n$, $B_n$ contains $y_1,\dots,y_n$, and that $g_n$ is the restriction of some element of
    $G$. To initiate the back-and-forth we define $g_0$ on the empty set and define its image as the empty set. Let us now assume that we have defined $g_n$ as wanted with domain $A_n$ containing $x_1,\ldots,x_n$ and image $B_n$ containing $y_1,\ldots,y_n$. Let us define $g_{n+1}$ by defining an image for $x_{n+1}$ and a preimage for $y_{n+1}$. If $x_{n+1}\in A_n$, then $g_n(x_{n+1})$ is already defined. Otherwise, we know that $g_n$ is the restriction of some element $g\in G$. Fix such an element and let $z=g(x_{n+1})$.  Since $z\notin B_n$ and $G$ has no algebraicity, the $G_{B_n}$-orbit of $z$ is infinite. Thus there a.s.\ exists $h\in G_{B_n}$ such that $\tilde{x}_{n+1}\coloneqq h(z)$ satisfies $a_{x_{n+1}}=b_{\Tilde{x}_{n+1}}$. Define $g_{n+1}(x_{n+1})\coloneqq\Tilde{x}_{n+1}$ and $(g_{n+1})_{\upharpoonright A_n}=g_n$, making it the restriction of $hg\in G$. We define in a similar manner $g_{n+1}\inv(y_{n+1})$.
    This construction provides for $\mu\otimes\mu$-a.e.\ couple of sequences $(a_n)_{n\in\N}$ and $(b_n)_{n\in\N}$ an element $g$, which belongs to $G$ as a limit of elements in $G$, such that $g\cdot(a_n)=(b_n)$. This shows that $G\curvearrowright (A,\kappa)^{\otimes\N}$ is essentially transitive by Lemma \ref{Lem.ConcentratedOnOrbit}. 
    
    We now tackle the case when $(A,\kappa)$ is not purely atomic. Let $O$ be a measurable set such that $\kappa(O)>0$ and $\kappa(\{ x\}) = 0$ for all $x\in  O$. For $\mu$-a.e.\ $a\in A$, infinitely many coordinates of $a$ are in $O$. Let us denote by $a_O$ the set of those coordinates. Since a.s.\ for any  $i,j\in a_O$, $a_i\neq a_j$, any $g\in G$ such that $g\cdot a=a$ stabilizes $a_O$ pointwise. However, for any $x,y\in \N$ distinct, by Corollary \ref{cor.separationGorbites}, there must be $\z$ contained in $a_O$ such that $(\z,x)$ and $(\z,y)$ are in different $G$-orbits. Therefore for any $x,y\in\N$ distinct, no $g\in G\setminus\{1_G\}$ satisfying $g\cdot a =a$ can send $x$ to $y$ for any $x,y$ distinct. Therefore $G\curvearrowright(A,\kappa)^{\otimes\N}$ is essentially free.
\end{proof}

\begin{remark}
    This result is a special case of Theorem \ref{thmintro.pmprigiddissociated}, as it is straightforward to show that p.m.p.\ actions $G\curvearrowright(A,\kappa)^{\otimes\N}$ are indeed dissociated. 
\end{remark}

\subsection{The proof of the main theorem}
We are now ready to prove the version for p.m.p.\ actions of Theorem \ref{thmintro.pmprigiddissociatedexpanded}.

\begin{theorem}\label{thm.pmprigiddissociatedexpanded}
    Let $G\lneq S_\infty$ be a transitive, proper, closed permutation group. If $G$ has no algebraicity and is primitive, then for any dissociated p.m.p.\ action $G\curvearrowright(X,\mu)$, the following holds: 
    \begin{itemize}
        \item either $\Stab(x)\curvearrowright\N$ has a fixed point for $\mu$-a.e.\ $x\in X$ and in this case  $G\curvearrowright(X,\mu)$ is essentially free,
        \item or $\Stab(x)\curvearrowright\N$ has no fixed point for $\mu$-a.e.\ $x\in X$ and in this case $G\curvearrowright(X,\mu)$ is essentially transitive.
        \end{itemize}
\end{theorem}

\begin{proof}
Assume that $G\curvearrowright(X,\mu)$ is not essentially transitive. By the version for p.m.p.\ actions of Theorem \ref{thm:NoFixIsConcOnOrbitpmp}, there exists a positive measure set of points $x\in X$ for which $\Stab(x)\curvearrowright\N$ has a fixed point. By ergodicity, $\Stab(x)\curvearrowright\N$ has a fixed point for $\mu$-a.e.\ $x\in X$. 
\begin{claim*}
    For $\mu$-a.e.\ $x\in X$, the group $G_{\Fix(\Stab(x))}$ is trivial. 
 \end{claim*}
 \begin{cproof}
     Let $\pi : X\to\{0,1\}^\N$ be the measurable, $G$-equivariant map given by $\pi(x)=\Fix(\Stab(x))$ and let $\nu=\pi_*\mu$. Since $\mu$ is dissociated, so is $\nu$ (it is straightforward to check that dissociation is preserved under factor). Therefore $\nu$ is a product measure. Fix two distinct points $a,b\in\N$. For $\mu$-a.e.\ $x\in X$, there exists by Corollary \ref{cor.separationGorbites} infinitely many tuples $\z\in\N^{<\omega}$ such that $(\z,a)$ and $(\z,b)$ lies in different $G$-orbits. Since $\nu$ is a product measure, there is for $\mu$-a.e.\ $x\in X$ one such $\z$ which is contained in $\Fix(\Stab(x))$. This shows that for $\mu$-a.e.\ $x\in X$, $a$ and $b$ lies in different $G_{\Fix(\Stab(x))}$-orbits. Therefore $G_{\Fix(\Stab(x))}$ is trivial almost surely. 
 \end{cproof}
 So for $\mu$-a.e.\ $x\in X$, the group $\Stab(x)$, which is a subgroup of $G_{\Fix(\Stab(x))}$, is trivial, which concludes the proof. 
\end{proof}

\begin{remark}\label{rem.Sinftynonessfreenonesstrans}
    The group $S_\infty$ admits p.m.p.\ ergodic actions that are neither essentially free nor essentially transitive. In fact, if $\kappa$ is any probability measure on $[0,1]$ which is neither purely atomic, nor a diffuse measure, then the p.m.p.\ action $S_\infty\curvearrowright([0,1],\kappa)^{\otimes\N}$ is neither essentially free nor essentially transitive. Notice that such actions fail to satisfy the first item of Theorem \ref{thm.pmprigiddissociatedexpanded}, since they have fixed points and are not essentially free. 
\end{remark}

\section{Invariant Random Subgroups of Polish groups}\label{sec.IRS}

\subsection{Definition}
In this section we define the notion of invariant random subgroups for Polish groups. Let $G$ be a Polish group. We denote by $\Sub(G)$ the space of closed subgroups of $G$. The \defin{Effros $\sigma$-algebra} is the $\sigma$-algebra on $\Sub(G)$ generated by the sets
\[\{H\in\Sub(G)\colon H\cap U\neq\emptyset\},\]
where $U$ varies over open subsets of $G$. The following lemma is probably well-known but we were not able to locate a proof in the literature. 
\begin{lemma}
    If $G$ is a Polish group, then $\Sub(G)$ equipped with the Effros $\sigma$-algebra is a standard Borel space.
    \end{lemma}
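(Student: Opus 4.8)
The plan is to realize $\Sub(G)$ as a Borel subset of a standard Borel space that everyone agrees is standard, namely the Effros Borel space $F(G)$ of \emph{all} closed subsets of $G$, and then invoke the classical fact that a Borel subset of a standard Borel space is standard. Recall (Kechris, \emph{Classical Descriptive Set Theory}, \S12.C) that for a Polish space $X$, the space $F(X)$ of closed subsets, equipped with the $\sigma$-algebra generated by $\{F\in F(X): F\cap U\neq\emptyset\}$ for $U$ ranging over open sets, is a standard Borel space; moreover this $\sigma$-algebra is generated by countably many such sets, taking $U$ in a countable basis. So the first step is simply to observe that the Effros $\sigma$-algebra on $\Sub(G)$, as defined in the statement, is exactly the subspace $\sigma$-algebra inherited from $F(G)$ — this is immediate from the definitions.

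The heart of the argument is then to show that $\Sub(G)$ is a Borel subset of $F(G)$. A closed nonempty subset $H\subseteq G$ is a subgroup iff $1\in H$, iff $H\cdot H\subseteq H$, and iff $H^{-1}\subseteq H$ (closure under inverses plus nonemptiness and closure under product forces $1\in H$, but keeping all three is harmless). I would fix a countable basis $(U_n)_n$ of $G$ and a countable dense set $(g_k)_k$, and express each of these conditions as a countable Boolean combination of the generating Borel sets of $F(G)$. For instance, "$H$ is nonempty" is the complement of $\bigcap_n\{H: H\cap U_n=\emptyset\}$ — actually $F(G)$ by convention may include $\emptyset$, so nonemptiness is $\{H:H\cap G\neq\emptyset\}$, which is generating. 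For closure under the group operations the standard device is: using continuity of multiplication and inversion and the fact that $H$ is closed, "$H$ is not closed under multiplication" means there exist basic open sets $U_i,U_j,U_\ell$ with $U_iU_j\subseteq U_\ell$ (a property of the triple checkable in $G$), $H\cap U_i\neq\emptyset$, $H\cap U_j\neq\emptyset$, but $H\cap \overline{U_\ell}=\emptyset$ — one has to be slightly careful to get an \emph{iff} rather than just one direction, which is where closedness of $H$ and regularity of $G$ enter; the clean way is to note $H\cdot H\subseteq H$ iff for all basic $U_i, U_j$ with $H\cap U_i\neq\emptyset\neq H\cap U_j$ and every basic $U_\ell\supseteq$ (a suitable neighborhood forcing the product), $H\cap U_\ell\neq\emptyset$, and then dualize. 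Similarly "$H^{-1}\subseteq H$" becomes: for all basic $U_n$, if $H\cap U_n\neq\emptyset$ then $H\cap U_n^{-1}\neq\emptyset$ (using that $U_n^{-1}$ is open and $\{V: V \text{ basic}, \overline{V}\subseteq U_n^{-1}\}$ covers $U_n^{-1}$, plus closedness of $H$). Each resulting condition is a countable intersection/union of sets of the form $\{H:H\cap U\neq\emptyset\}$ and their complements, hence Borel in the Effros $\sigma$-algebra of $F(G)$.

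Intersecting these countably many Borel conditions gives that $\Sub(G)$ is Borel in $F(G)$, and therefore $\Sub(G)$ with the induced (= Effros, as defined) $\sigma$-algebra is standard Borel. The main obstacle, and the only place requiring real care, is phrasing the closure-under-multiplication and closure-under-inversion conditions so that they are genuine \emph{equivalences} expressible with the generators $\{H:H\cap U\neq\emptyset\}$: naively "$\exists a,b\in H$ with $ab\notin H$" is not obviously Borel because it quantifies over points of $H$, so one must replace point-quantifiers by quantifiers over the countably many basic open sets and exploit both the closedness of $H$ and the topological regularity of $G$ to pass between open sets and their closures. Once that translation is done correctly, everything else is bookkeeping. (Alternatively, if $G$ is additionally assumed to have a compatible complete metric, one can instead use the Borel selector theorem for $F(G)$ to get a sequence of Borel maps $d_n:F(G)\to G$ with $\{d_n(H)\}$ dense in $H$ for nonempty $H$, and then write the subgroup conditions using these $d_n$ and continuity of the group operations; I would mention this as a remark but carry out the first, selector-free, approach in detail.)
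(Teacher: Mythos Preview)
Your approach is correct, but it is genuinely different from the paper's. You carry out a direct Borel coding of the subgroup axioms in terms of the generators $\{H:H\cap U\neq\emptyset\}$, using continuity of the group operations and closedness of $H$. The paper instead takes exactly the route you relegate to a parenthetical remark: it invokes the Kuratowski--Ryll-Nardzewski selection theorem to obtain Borel maps $d_i:\FF(G)\to G$ with $\{d_i(F):i\in I\}$ dense in each nonempty $F$, and then observes that $F\in\Sub(G)$ iff $1_G\in F$ and $d_i(F)d_j(F)^{-1}\in F$ for all $i,j$. Your approach is more elementary (no selection theorem) but requires the careful translation you allude to; the paper's is shorter and cleaner at the cost of a black box. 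Two small points: your caveat ``if $G$ is additionally assumed to have a compatible complete metric'' is vacuous, since Polish already means completely metrizable and separable; and in your sketch of the multiplication condition, the appearance of $\overline{U_\ell}$ is a misstep (sets of the form $\{H:H\cap K=\emptyset\}$ for closed $K$ are not obviously Effros Borel), though the correct version you are reaching for --- ``$H\cdot H\not\subseteq H$ iff there exist basic $U_i,U_j,U_\ell$ with $U_iU_j\subseteq U_\ell$, $H\cap U_i\neq\emptyset$, $H\cap U_j\neq\emptyset$, $H\cap U_\ell=\emptyset$'' --- does work and uses only open sets.
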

    
    \begin{proof}
        If $X$ is a standard Borel space and $\mathcal{F}(X)$ denotes the space of closed subsets of $X$, then $\mathcal{F}(X)$ is a standard Borel space when equipped with the $\sigma$-algebra generated by the sets $\{F\in\FF(X)\colon F\cap U\neq\emptyset\}$ where $U$ varies over open subsets of $X$ \cite{Effros}. Therefore, in our case $\FF(G)$ is standard Borel. So it remains to show that $\Sub(G)$ is Borel in $\FF(G)$. By the selection theorem of Kuratowski and Ryll-Nardzewski \cite[Thm.~12.13]{KechrisCDST1995}, there exists a countable sequence of Borel maps $d_i : \FF(G)\to G$ with $i\in I$ such that for all nonempty $F\in \FF(G)$, the set $\{d_i(F)\colon i\in I\}$ is dense in $F$. But a closed subset $F\in \FF(G)$ belongs to $\Sub(G)$ if and only if $1_G\in F$ and $d_i(F)d_j(F)\inv\in F$ for all $i,j\in I$. This implies that $\Sub(G)$ is Borel in $\FF(G)$ and thus $\Sub(G)$ is a standard Borel space. 
    \end{proof}

\begin{remark}
If $G$ is a Polish \textit{locally compact} group, then $\Sub(G)$ is usually endowed with the Chabauty topology, which is the topology generated by the sets
\[\{H\in\Sub(G)\colon H\cap U\neq \emptyset, H\cap K=\emptyset\}\]
where $U$ varies over open subsets of $G$ and $K$ over compact subsets of $G$. In this case, $\Sub(G)$ is a compact Hausdorff space and its Borel $\sigma$-algebra is the Effros $\sigma$-algebra. However, for non locally compact Polish groups, the Chabauty topology is not Hausdorff in general (one can adapt the proof of \cite[Thm.~4.4.12]{Beer} to get that this is indeed not the case for many Polish groups including $S_\infty$). 
\end{remark}

\begin{remark}
    If $d$ is a compatible complete metric on the Polish group $G$, then  the Wijsman topology on $\Sub(G)$ is a Polish topology whose Borel $\sigma$-algebra is the Effros $\sigma$-algebra, see for instance \cite[Sec.~1]{KechrisSpaceActions}. However,  the Wijsman topology depends a priori on the choice of the compatible complete metric $d$ on $G$. 
\end{remark}

The $G$-action by conjugation on $\Sub(G)$ is Borel and we are interested in the probability measures invariant under this action.

\begin{definition} Let $G$ be a Polish group. 
An \defin{Invariant Random Subgroup} of $G$ is a Borel probability measure on $\Sub(G)$ that is invariant by conjugation.
\end{definition}
 
We denote by $\mathrm{IRS}(G)=\mathrm{Prob}(\Sub(G))^G$ the space of invariant random subgroups of $G$. This is a standard Borel space equipped with the $\sigma$-algebra generated by the maps $\mu\mapsto \mu(A)$ with $A$ varying over Borel subsets of $\Sub(G)$ \cite[Thm.~17.23 and 17.24]{KechrisCDST1995}. We say that $\nu\in\IRS(G)$ is \defin{concentrated on a conjugacy class} if there exists an orbit $O$ of the $G$-action by conjugation on $\Sub(G)$ such that $\nu(O)=1$. Recall that orbits of Borel actions are indeed Borel \cite[Thm.~15.14]{KechrisCDST1995} so this definition makes sense.

We now explain how to construct IRSs. Let $G$ be a Polish group and let $G\curvearrowright (X,\mu)$ be a p.m.p.\ action of $G$. Recall that for us, a p.m.p.\ action of a Polish group is a Borel action on some standard Borel space with a Borel invariant probability measure. For $x\in X$, let $\Stab(x)\coloneqq\{g\in G\colon g\cdot x=x\}$ denotes the stabilizer of $x$. We will prove that the law of the stabilizer of a $\mu$-random point is indeed an IRS of $G$. For this, we need the following lemma. 

\begin{lemma}\label{lem.proprietesStab}
    Let $G$ be a Polish group and $G\curvearrowright (X,\mu)$ a p.m.p.\ action. Then
    \begin{enumerate}[label=(\roman*)]
        \item\label{item.stabferme} for all $x\in X$, $\Stab(x)$ is a closed subgroup,
        \item\label{item.stabmesurable} the map $\Stab: x\in X\mapsto \Stab(x)\in\Sub(G)$ is $\mu$-measurable.
    \end{enumerate}
\end{lemma}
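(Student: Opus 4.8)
The plan is to handle the two items separately, the common tool being the Becker--Kechris theorem that a Borel action of a Polish group on a standard Borel space becomes continuous after refining the Borel structure to a compatible Polish topology. For \ref{item.stabferme}, first note that $\Stab(x)$ is obviously a subgroup of $G$: it contains $1_G$ and is closed under products and inverses because $G\curvearrowright X$ is an action. For closedness, I would invoke \cite{BeckerKechris}: there is a Polish topology on $X$ with the same Borel sets as the original one for which the action map $G\times X\to X$ is continuous; in particular the orbit map $g\mapsto g\cdot x$ is continuous, so $\Stab(x)$ is the preimage of the closed singleton $\{x\}$, hence a closed subgroup of $G$. (Alternatively one can cite directly the statement that stabilizers of Borel Polish group actions are closed.) Thus $\Stab$ really takes values in $\Sub(G)$.

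For \ref{item.stabmesurable}, I would recall that the Effros $\sigma$-algebra on $\Sub(G)$ is generated by the sets $S_U\coloneqq\{H\in\Sub(G)\colon H\cap U\neq\emptyset\}$ with $U$ ranging over open subsets of $G$, so it suffices to prove that $\Stab^{-1}(S_U)=\{x\in X\colon \exists g\in U,\ g\cdot x=x\}$ is $\mu$-measurable for every open $U\subseteq G$. Consider $E\coloneqq\{(x,g)\in X\times G\colon g\cdot x=x\}$. Since the action is Borel, the map $(x,g)\mapsto(g\cdot x,x)$ is Borel, and the diagonal of $X\times X$ is Borel because $X$ is standard Borel; hence $E$ is Borel in $X\times G$, and so is $E\cap(X\times U)$. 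Its projection to the first coordinate is exactly $\Stab^{-1}(S_U)$, which is therefore an analytic subset of $X$, hence universally measurable, and in particular $\mu$-measurable. This yields the claim.

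I do not expect a serious obstacle. The one point that needs care is that the naive argument ``$\Stab(x)$ is the preimage of $\{x\}$ under a Borel map, hence Borel'' only produces a Borel subgroup, not a priori a closed one, so the refinement-to-continuity input (or an equivalent) is genuinely needed in \ref{item.stabferme} in order to land in $\Sub(G)$ rather than merely in the standard Borel space of Borel subgroups. Once that is in place, \ref{item.stabmesurable} is the routine observation that a stabilizer-meets-$U$ event is the projection of a Borel subset of $X\times G$, hence analytic, hence $\mu$-measurable.
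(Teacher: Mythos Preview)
Your proposal is correct and follows essentially the same approach as the paper: both invoke the Becker--Kechris refinement to a Polish topology making the action continuous for \ref{item.stabferme}, and both prove \ref{item.stabmesurable} by showing that $\Stab^{-1}(S_U)$ is the projection of the Borel set $\{(x,g)\in X\times U: g\cdot x=x\}$, hence analytic and thus $\mu$-measurable. A minor cosmetic difference is that the paper uses the refined Polish topology also in \ref{item.stabmesurable} (to argue the diagonal is closed), whereas you observe directly that the diagonal is Borel in any standard Borel space, so the Becker--Kechris input is needed only for \ref{item.stabferme}.
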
 
\begin{proof}
By \cite[Thm.~5.2.1]{BeckerKechris}, there exists a Polish topology on $X$ whose Borel $\sigma$-algebra is that of $X$, such that the action $G\curvearrowright X$ is continuous. Let us fix such a topology. The proof of \ref{item.stabferme} is obvious: stabilizers are closed because the action is continuous. Let us prove \ref{item.stabmesurable}. Let $U\subseteq G$ be open and let $B_U\coloneqq \{H\in\Sub(G)\colon H\cap U\neq\varnothing\}$. Let us prove that $\Stab\inv(B_U)$ is analytic (the continuous image of a Borel set in a Polish space). Then \cite[Theorem 21.10]{KechrisCDST1995} will allow us to conclude that $\Stab$ is $\mu$-measurable. First, we have \begin{align*}
      \Stab^{-1}(B_U)&=\{x\in X\colon \exists g\in U, g\cdot x=x\} \\
      &=\pi(B),
\end{align*}where $B=\{(x,g)\in X\times U\colon g\cdot x=x\}$ and $\pi : X\times U\to X$ denotes the projection onto the first coordinate. So we need to prove that $B$ is Borel. But $B$ is the preimage under the continuous map $(x,g)\in X\times U\to (x,g\cdot x)\in X\times X$ of the diagonal set $\{(x,x)\colon x\in X\}$, which is Borel (because $X$ is Polish).
\end{proof}

\begin{remark}
   The fact that stabilizers of Borel actions are closed is due to Varadarajan for locally compact groups, see \cite{Varadarajan} and to Miller \cite[Thm.~2]{Miller} for Polish groups. 
\end{remark}

Therefore, p.m.p.\ actions of Polish groups produce IRSs: if $G\curvearrowright (X,\mu)$ is a p.m.p.\ action of a Polish group, then $\Stab_*\mu$ is an IRS of $G$. In the next theorem, we prove that for closed permutation group groups, every IRS arises this way.

\begin{theorem}\label{thm.IRSrealised}
Let $G$ be a closed subgroup of $S_\infty$ and let $\nu\in\mathrm{IRS}(G)$.  Then there exists a p.m.p.~action $G\curvearrowright (X,\mu)$ such that $\Stab_*\mu=\nu$. 
\end{theorem}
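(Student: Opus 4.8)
The plan is to realize $\nu$ as the stabilizer IRS of the tautological action of $G$ on $\Sub(G)$ itself, after passing to a suitable space of cosets. The natural candidate measure lives not on $\Sub(G)$ but on a space that "remembers" a point being stabilized; the obvious move is to build $(X,\mu)$ as a bundle over $(\Sub(G),\nu)$ whose fiber over a closed subgroup $H$ is the coset space $G/H$, equipped with a $G$-invariant probability measure on that fiber. The difficulty, and the reason we cannot just invoke the locally compact proof of \cite[Thm.~2.6]{ABBGNRS}, is that for $H\in\Sub(G)$ with $G$ non locally compact, $G/H$ need not carry \emph{any} $G$-invariant probability measure (indeed it typically does not). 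So a direct coset-space construction fails, and one must instead produce the realizing action abstractly.

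First I would use the standard device of turning the Borel conjugation action $G\curvearrowright\Sub(G)$ into a continuous action on a Polish space via \cite[Thm.~5.2.1]{BeckerKechris}, and more importantly invoke the Becker--Kechris universality theorem \cite[Thm.~2.7.4]{BeckerKechris}: fixing a countable relational language $\LL\supseteq\LL_G$ with relations of unbounded arity, the relativized logic action $G\curvearrowright\Struc_\LL^G$ is Borel-universal, so there is a $G$-equivariant Borel embedding $\iota:\Sub(G)\to\Struc_\LL^G$. This reduces the problem to the following: given an IRS $\nu$, find a $G$-invariant probability measure $\mu$ on some standard Borel $G$-space $X$ together with a Borel $G$-equivariant map whose effect on stabilizers pushes $\mu$ forward to $\nu$. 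The cleanest way to get the fibered structure is to observe that $\mathrm{Stab}:X\to\Sub(G)$ being $G$-equivariant (Lemma \ref{lem.proprietesStab}), it suffices to build $X$ so that $\mathrm{Stab}^{-1}(H)$ is, $\nu$-a.s., a single $G$-orbit isomorphic to $G/H$ carrying an invariant measure — which is exactly what we said fails. So instead I would enlarge: take $X$ to be (a Borel model of) the space of pairs $(H,\M)$ where $H\in\Sub(G)$ and $\M\in\Struc_\LL^G$ is an expansion with $\Aut(\M)=H$ — the fiber over $H$ being the $G$-orbit of any such $\M$ inside $\Struc_\LL^G$, on which $G$ \emph{does} act with a distinguished structure. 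Concretely: for each $H$, pick (Borel-measurably in $H$, using a Kuratowski--Ryll-Nardzewski selector) an expansion $\M_H$ with $\Aut(\M_H)=H$; this is possible because every closed subgroup of $S_\infty$ is the automorphism group of some relational structure, and one checks the selection can be made Borel. Then the map $H\mapsto G\cdot\M_H\subseteq\Struc_\LL^G$ is a Borel field of $G$-orbits, and $\mu:=\int_{\Sub(G)}(\text{orbit measure on }G\cdot\M_H)\,d\nu(H)$.

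The main steps, in order: (1) produce a Borel section $H\mapsto\M_H$ assigning to each closed subgroup an $\LL$-expansion with automorphism group exactly $H$, where $\LL$ has relations of all arities — this uses the canonical-structure construction of Section \ref{sec.Prelim} relativized to a point of $\Sub(G)$, plus a measurable selection to handle the non-canonical choices; (2) check $G$-equivariance: if $g\cdot H = gHg^{-1}=H'$ then $g\cdot\M_H$ and $\M_{H'}$ lie in the same $G$-orbit (they have the same automorphism group), and adjust the section on a conull set so that in fact $g\cdot\M_H = \M_{gHg^{-1}}$ up to the orbit equivalence, which is what lets the fiber measures patch together $G$-invariantly; (3) define $X$ as the Borel set $\{(H,\M): H\in\Sub(G),\ \M\in G\cdot\M_H\}\subseteq\Sub(G)\times\Struc_\LL^G$, define $\mu$ by disintegration over $\nu$ with the $G$-invariant orbit measure on each fiber (each fiber $G\cdot\M_H\cong G/H$ as a Borel $G$-space, and carries a canonical $\sigma$-finite-to-probability normalized invariant measure coming from the orbit equivalence relation being the orbit of a single point — here one uses that the orbit is a standard Borel $G$-space so that "the" $G$-quasi-invariant measure class is the pushforward class, and invariance of $\nu$ under conjugation promotes it to an honest invariant measure on $X$); (4) compute $\mathrm{Stab}_*\mu$: for $\mu$-a.e.\ $(H,\M)$ one has $\mathrm{Stab}_{(H,\M)} = \{g: gHg^{-1}=H\text{ and }g\cdot\M=\M\}$, and since on the fiber $\M$ ranges over $G\cdot\M_H$ with $\Aut(\M_H)=H$, the stabilizer of $(H,\M)$ is a conjugate of $H$; the disintegration and conjugation-invariance of $\nu$ then give $\mathrm{Stab}_*\mu=\nu$.

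The hard part will be step (1)–(2): making the assignment $H\mapsto\M_H$ \emph{Borel} and compatibly $G$-equivariant simultaneously. The existence of \emph{some} structure with a given automorphism group is classical, but here we need it uniformly and measurably over all of $\Sub(G)$, and we need the choice to intertwine conjugation on $\Sub(G)$ with the logic action on $\Struc_\LL^G$ at least on a $\nu$-conull invariant set. I expect this to go through by first doing it $G$-equivariantly on orbits where a Borel transversal exists (using the Becker--Kechris theory of Borel $G$-spaces and the fact that the orbit equivalence relation on $\Sub(G)$ is Borel), then using that $\nu$ is conjugation-invariant to discard the measure-zero obstruction; alternatively one can bypass the transversal by working with the equivalence relation directly and invoking that an invariant measure for an orbit equivalence relation extends from a fundamental domain. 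If even that is delicate, the fallback is to replace "$\Aut(\M)=H$ exactly" by "$H$ is a distinguished finite-index-or-dense subgroup encoded by an extra relation," at the cost of a routine bookkeeping argument to recover $H$ from $\M$.
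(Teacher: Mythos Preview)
Your proposal has a genuine gap, and it is precisely the obstacle you yourself flagged at the outset. You correctly observe that for non locally compact $G$ the coset space $G/H$ typically carries no $G$-invariant probability measure, so the locally compact argument cannot be copied. But your ``fix'' in steps (1)--(3) does not escape this: once you arrange $\Aut(\M_H)=H$, the fiber $G\cdot\M_H$ is $G$-equivariantly isomorphic to $G/H$, and you are right back to needing a $G$-invariant probability measure on $G/H$. The sentence in step (3) about a ``canonical $\sigma$-finite-to-probability normalized invariant measure'' and about conjugation-invariance of $\nu$ ``promoting'' quasi-invariance to invariance is not a real argument: for $\mu=\int m_H\,d\nu(H)$ to be $G$-invariant you need $g_*m_H=m_{gHg^{-1}}$ for each $g$, which presupposes that the $m_H$ exist as honest invariant probability measures on the fibers. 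They do not, in general, and no amount of integrating against $\nu$ manufactures them. The measurable-selection and equivariance issues in steps (1)--(2) are secondary; even if they worked perfectly you would be stuck at (3).

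The paper's proof avoids this by choosing a completely different fiber. Instead of the orbit $G/H$, it takes the space of \emph{colorings} $c:\N^{<\omega}\to[0,1]$ that are constant on $H$-orbits, equipped with the obvious product measure $\lambda^H$ (i.i.d.\ uniform values, one per $H$-orbit). This space always carries a canonical $G_H$-invariant probability measure, the family $H\mapsto\lambda^H$ is automatically conjugation-equivariant by uniqueness, and so $\mu=\int\lambda^H\,d\nu(H)$ on $X=\Sub(G)\times[0,1]^{\N^{<\omega}}$ is $G$-invariant without any selection argument. The remaining work is to check that $\Stab(H,c)=H$ almost surely: any element of the stabilizer lies in $N_G(H)$ and, since a $\lambda^H$-generic coloring takes distinct values on distinct $H$-orbits, must preserve every $H$-orbit in $\N^{<\omega}$ setwise; a short closure argument then forces such an element to lie in $H$. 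The point to take away is that one should not try to realize $H$ via its coset space, but via a Bernoulli-type space over $\N^{<\omega}/H$ on which a probability measure exists for free.
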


\begin{proof} 
Fix $\nu\in\mathrm{IRS}(G)$ and let us prove that $\nu$ is a stabilizer IRS. Let $X$ be the standard Borel space defined by 
\[X\coloneqq \Sub(G)\times [0,1]^{\N^{<\omega}}.\] Recall that $\N^{<\omega}$ stands for the disjoint union of $\N^n$ for $n\geq 1$. The action of $G$ on $\N^{<\omega}$ induces a Borel action of $G$ on $[0,1]^{\N^{<\omega}}$. Let us now construct a $G$-invariant probability measure $\mu$ on $X$ such that $\Stab_*\mu=\nu$. 

Given $H\in\Sub(G)$, a coloring of $H$ is a map $c : \N^{<\omega}\to[0,1]$ which is constant on each orbit of the action $H\curvearrowright\N^{<\omega}$. 
Let $H\in\Sub(G)$. Then there exists a unique Borel probability measure $\lambda^H$ on $[0,1]^{\N^{<\omega}}$ concentrated on colorings of $H$ such that if $c$ is a random variable with law $\lambda^H$ and $\overline{x_1},\dots,\overline{x_n}\in\N^{<\omega}$ are tuples whose $H$-orbits are pairwise disjoint, then $c(\overline{x_1}), \dots,c(\overline{x_n})$ are i.i.d.\ uniform random variables. By uniqueness, we have that for all $g\in G$, $g_*\lambda^H=\lambda^{gHg\inv}$. Therefore the probability measure on $X$
\[\mu=\int_{X}\lambda^H(c)d\nu(H)\]
is $G$-invariant.

We will now prove that for $\mu$-a.e.\ $(H,c)\in X$, we have $\Stab(H,c)=H$. Remark first that if $g\in\Stab(H,c)$ then $g\in N_G(H)$. The following claim is what we need to conclude that $\Stab_*\mu=\nu$.

\begin{claim*}
Let $H\in\Sub(G)$ and let $g\in N_G(H)$ be such that any orbit of the action $H\curvearrowright\N^{<\omega}$ is invariant by $g$. Then $g\in H$. 
\end{claim*}
\begin{cproof}
Fix an enumeration $x_1,x_2,\dots$ of $\N$. For all $n\geq 1$, the $H$-orbit of $(x_1,\dots,x_n)$ is preserved by $g$. Since $g\in N_G(H)$, this implies that
\[H(x_1,\dots,x_n)=gH(x_1,\dots,x_n)=Hg(x_1,\dots,x_n).\]
Therefore there exists $h_n\in H$ such that $h_n(x_1,\dots,x_n)=g(x_1,\dots,x_n)$. This means that $h_n\to g$ as $n\to +\infty$. Since $H$ is closed, we obtain that $g\in H$. 
\end{cproof}
\end{proof}

\begin{remark}
    Variations of this result for Polish locally compact groups already appeared in the literature. 
Theorem \ref{thm.IRSrealised} was proved for discrete groups by Abért, Glasner and Virag \cite{AbertGlasnerVirag} and for Polish locally compact groups in \cite{ABBGNRS}. 
\end{remark}

\begin{remark}
    Theorem \ref{thm.IRSrealised} is false in full generality for Polish groups, as there exist Polish groups, such as $\Aut(X,\mu)$, which admits no non-trivial p.m.p.\ action, \cite{GlasnerTsirelsonWeiss}, \cite{GlasnerWeissSpatial}. These groups therefore have no essentially free p.m.p.~actions. For such groups $G$, the IRS $\delta_{\{1_G\}}$ is not realized. We thank Anush Tserunyan and Ronnie Chen for pointing us toward such examples. Nonetheless, it would be interesting to understand which Polish groups do realize their IRSs.
\end{remark}

\subsection{From IRS to IRE and vice versa}\label{sec.IRStoIRM}

We will draw a connection between invariant random subgroups and invariant random expansions for closed permutation groups. 

\paragraph{From IRS to IRE.} Fix $G\leq S_\infty$ a closed subgroup. Let us recall the definition of the canonical language. For all $n\geq 1$, let $J_n$ be the set of orbits of the diagonal action $G\curvearrowright\N^n$ and let $J=\bigcup_{n\geq 1}J_n$. The canonical language associated with $G$ is $\LL_G\coloneqq (R_j)_{j\in J}$ where $R_j$ is a relation symbol of arity $n$ for all $j\in J_n$. We define a new language $\LL_{dyn}\coloneqq(T_n)_{n\geq 1}\sqcup\LL_G$ that we call the \defin{dynamical language of $G$}, where $T_n$ is a relation symbol of arity $n$ for each integer $n\geq 1$. 

To any $H\in\Sub(G)$ we associate an expansion $\M_G(H)\in\Struc_{\LL_{dyn}}^G$ of the canonical structure $\M_G$ in the language $\LL_{dyn}$ as follows:
\[\M_G(H)\coloneqq((\mathcal{R}_{H\curvearrowright\N^n})_{n\in\N},(R_j^G)_{j\in J}),\]
where $R_j^G=j\subseteq\N^n$ for all $j\in J_n$ and where $\RR_{H\curvearrowright\N^n}\coloneqq\{(\x,\y)\in\N^n\times\N^n\colon \y\in H\x\}$ is the orbit equivalence relation of the $H$-action on $n$-tuples.

\begin{lemma}\label{lemma.IRStoIRM}
    Let $G\leq S_\infty$ be a closed subgroup. The following hold.
    \begin{enumerate}[label=(\roman*)]
        \item\label{item.MG(H)injective} The map $H\in\Sub(G)\mapsto \M_G(H)\in\Struc_{\LL_{\text{dyn}}}^G$ is Borel, $G$-equivariant and injective.
        \item\label{item.autMG(H)} For all $H\in\Sub(G)$, we have $\Aut(\M_G(H))=N_G(H)$. 
    \end{enumerate}
\end{lemma}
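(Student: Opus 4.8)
The plan is to verify the three properties in \ref{item.MG(H)injective} — Borelness, $G$-equivariance, injectivity — and then \ref{item.autMG(H)} more or less directly from the definitions, since the map $H\mapsto\M_G(H)$ is engineered precisely so that the relation symbols $T_n$ record the orbit equivalence relations $\RR_{H\curvearrowright\N^n}$. First I would address Borelness: the coordinate of $\M_G(H)$ corresponding to the symbol $T_n$ at a pair $(\x,\y)\in\N^n\times\N^n$ is $1$ iff $\y\in H\x$, i.e.\ iff there exists $h\in H$ with $h(\x)=\y$. Writing $U_{\x,\y}=\{g\in G\colon g(\x)=\y\}$, which is a clopen subset of $G$ (it is either empty or a coset of the open subgroup $G_{\x}$), membership of the coordinate in $\{1\}$ is exactly the event $\{H\in\Sub(G)\colon H\cap U_{\x,\y}\neq\emptyset\}$, which is a generator of the Effros $\sigma$-algebra. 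Since the topology on $\Struc_{\LL_{dyn}}^G$ is generated by such coordinate cylinders (and the $\LL_G$-coordinates of $\M_G(H)$ are constant in $H$), the map is Borel.

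Next, $G$-equivariance: for $g\in G$ one has $gH\x = g(H\x)$, so $\y\in (gHg\inv)(g\x)$ iff $g\inv\y\in H\x$; comparing this with the description of the relativized logic action on the symbols $T_n$ shows $\M_G(gHg\inv)=g\cdot\M_G(H)$. For injectivity, suppose $\M_G(H)=\M_G(K)$. Then in particular $\RR_{H\curvearrowright\N}=\RR_{K\curvearrowright\N^1}$ — wait, one orbit on $1$-tuples is not enough. Instead I would argue as in the claim inside the proof of Theorem \ref{thm.IRSrealised}: fixing an enumeration $x_1,x_2,\dots$ of $\N$, equality of $\RR_{H\curvearrowright\N^n}$ and $\RR_{K\curvearrowright\N^n}$ for all $n$ means $H(x_1,\dots,x_n)=K(x_1,\dots,x_n)$ for every $n$, so every $h\in H$ agrees on $\{x_1,\dots,x_n\}$ with some element of $K$, hence $h$ is a pointwise limit of elements of $K$ and thus $h\in K$ since $K$ is closed; by symmetry $H=K$.

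Finally \ref{item.autMG(H)}: the inclusion $N_G(H)\subseteq\Aut(\M_G(H))$ is immediate from equivariance, since $g\in N_G(H)$ gives $g\cdot\M_G(H)=\M_G(gHg\inv)=\M_G(H)$. Conversely, if $g\in\Aut(\M_G(H))$, then $g$ preserves every relation $\RR_{H\curvearrowright\N^n}$; unwinding, $g(H\x)=H(g\x)$ for all tuples $\x$, which says $gHg\inv$ and $H$ have the same orbits on $\N^{<\omega}$, hence $gHg\inv=H$ by the injectivity argument just given (applied to $gHg\inv$ and $H$), so $g\in N_G(H)$. The main obstacle — though a mild one — is the injectivity step: a single relation $T_n$ does not suffice, and one must exploit the full family $(T_n)_{n\geq1}$ together with closedness of subgroups, exactly the trick already used in the proof of Theorem \ref{thm.IRSrealised}; everything else is a routine unravelling of definitions.
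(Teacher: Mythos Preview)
Your proposal is correct and follows essentially the same approach as the paper. The paper's proof only spells out Borelness (via the Effros generators $\{H:H\cap U\neq\emptyset\}$, exactly as you do) and declares the rest ``straightforward''; you fill in the equivariance, injectivity, and item \ref{item.autMG(H)} correctly, invoking precisely the closedness argument from the claim in Theorem~\ref{thm.IRSrealised} that the paper has in mind.
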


\begin{proof}
    We only show that the map is Borel, the rest being straightforward. For this, it suffices to show that for all $n\geq 1$ and all $\x,\y\in\N^n$, the set $\{H\in\Sub(G)\colon (\x,\y)\in\RR_{H\curvearrowright\N^n}\}$ is Borel. But this set is exactly $\{H\in\Sub(G)\colon H\cap U\neq \emptyset\}$ where $U\subseteq G$ is the open subset consisting of the elements $g\in G$ such that $g(\x)=\y$. This last set belongs to the Effros $\sigma$-algebra, therefore $H\mapsto \M_G(H)$ is Borel.
\end{proof}

To any $\nu\in\IRS(G)$ we can therefore associate an IRE of $G$ as the law of the random expansion $\M_G(H)$ where $H$ is a $\nu$-random closed subgroup of $G$. 

\begin{remark}
This construction is in fact underlying in the proof of Theorem \ref{thm.IRSrealised}, in which we implicitly use this IRE that we expand using colorings of each orbit.
\end{remark}

\paragraph{From IRE to IRS.} Let $G$ be a closed subgroup of $S_\infty$ and fix a countable relational language $\LL$ which contains $\LL_G$. Let $\mu\in\mathrm{IRE}(G)$. If $\M$ is a $\mu$-random expansion, then the law of $\Aut(\M)$ is an IRS of $G$. It may happen that the IRS obtained this way is trivial (that is equal to $\delta_{\{1_G\}}$) whereas the IRE is a nontrivial object of interest. We give details of such an IRE in the next example. 

\begin{example}[The kaleidoscope random graph]
    Let $\LL=(R_n)_{n\in\N}$ be the language consisting in countably many binary relations. Denote by $\PP_2(\N)$ the set of subsets $A\subseteq\N$ with $\lvert A\rvert=2$. Consider the random element $\M\in\Struc_\LL$  obtained by first picking an $S_\infty$-invariant random non-empty subset $A_{\{i,j\}}\subseteq\N$, independently for each $\{i,j\}\in\PP_2(\N)$ and then setting $R_n^\M(i,j)=1$ if and only if $n\in A_{\{i,j\}}$. The law of $\M$ is indeed an IRE of $S_\infty$. This IRE can be thought as the union of countably many random graphs on $\N$, each of which having its edges labeled by a different color. The theory of such an IRE is studied in \cite[Ex.~3.2]{AFKP} and \cite[\S5.1]{AFNP} . This is an example of what they call a properly ergodic structure, which is the main object of study in \cite{AFKP}. If $\mu$ denotes the law of the IRE $\M$, then the p.m.p.~action $S_\infty\curvearrowright (\Struc_\LL,\mu)$ is measurably conjugate to $S_\infty\curvearrowright ([0,1],\mathrm{Leb})^{\otimes\PP_2(\N)}$ which is easily seen to be a properly ergodic p.m.p.~action.
    \end{example}

\subsection{Rigidity of ergodic IRSs of $S_\infty$}

We have already proved in Theorem \ref{thmintro.pmprigiddissociated} that the p.m.p.\ ergodic actions of any proper, transitive, closed subgroup $G\lneq S_\infty$, which is oligomorphic, has no algebraicity and weakly eliminates imaginaries, are either essentially free or essentially transitive. In particular, this implies that for any such group $G$, any ergodic $\nu\in\IRS(G)$ is concentrated on a conjugacy class. On the other hand, we have seen in Remark \ref{rem.Sinftynonessfreenonesstrans} that $S_\infty$ admits p.m.p.\ ergodic actions that are neither essentially free nor essentially transitive.
However, we prove that such a behavior never appears for ergodic IRSs of $S_\infty$. We therefore obtain the following result.

\begin{theorem}\label{thm.IRSrigidity}
    Let $G\leq S_\infty$ be a transitive closed subgroup. If $G$ is oligomorphic, has no algebraicity and admits weak elimination of imaginaries, then any ergodic $\nu\in\mathrm{IRS}(G)$ is concentrated on a conjugacy class. 
\end{theorem}

\begin{proof} As mentioned in the above discussion, if in addition $G$ is a proper subgroup of $S_\infty$, then this is a consequence of Theorem \ref{thmintro.pmprigide}. Therefore, the proof is dedicated to the case $G=S_\infty$. Let $\nu\in\IRS(G)$ be ergodic. By Theorem \ref{thm.JTimpliesDdF}, $\nu$ is dissociated. In order to prove the result, it suffices to prove by Theorem \ref{thm:NoFixIsConcOnOrbitpmp} that for $\nu$-a.e.\ $H\in\Sub(G)$, the action $N_G(H)\curvearrowright\N$ has no fixed point (the normalizer subgroup $N_G(H)$ is indeed the stabilizer of $H\in\Sub(G)$ for the $G$-action by conjugation on $\Sub(G)$). 

\begin{claim*}
    One of the following condition holds:
    \begin{itemize}
        \item either for $\nu$-a.e.\ $H\in\Sub(G)$, the action $H\curvearrowright\N$ has no fixed point,
        \item or for $\nu$-a.e.\ $H\in\Sub(G)$, the action $H\curvearrowright\N$ has infinitely many fixed points. 
    \end{itemize}
\end{claim*}
\begin{cproof}
    The proof is identical to that of Lemma \ref{lem.fixedptinfinite}. Assume that there exists a finite nonempty subset $A\subseteq\N$ such that 
    \[\nu(\{H\in\Sub(G)\colon \Fix(H)=A\})>0.\]
    Since $G$ has no algebraicity, there exists infinitely many pairwise disjoint sets $A_n$ in the $G$-orbit of $A$ by Neumann's lemma \cite[Cor.~4.2.2]{Hodge}. By $G$-invariance of $\nu$, the sets $\{H\in\Sub(G)\colon \Fix(H)=A_n\}$ all have the same measure, which is strictly positive. This is absurd since they are pairwise disjoint. 
\end{cproof}
Now any point $x\in\N$ which is fixed by $N_{G}(H)$ in in fact fixed by $H$. But if for $\nu$-a.e.\ $H\in\Sub(G)$, the action $H\curvearrowright\N$ has infinitely many fixed points, then for $\nu$-a.e.\ $H\in\Sub(G)$, the action $N_{G}(H)\curvearrowright\N$ has no fixed point: indeed, any permutation whose support is contained in the set $\Fix(H)$ commutes with any element of $H$. Therefore, the symmetric group on the set $\Fix(H)$ is contained in $N_G(H)$ (this last argument is the only place where $G=S_\infty$ is used). Therefore, $\nu$ is concentrated on a conjugacy class.
\end{proof}

\section{Further discussions}\label{sec.discussion}

\paragraph{On the existence of essentially transitive IREs and p.m.p\ actions.}In their seminal paper \cite{AFP},  Ackerman, Freer and Patel proved that for any countable relational language $\LL$ and any $\M\in\Struc_\LL$ with no algebraicity, there exists an IRE of $S_\infty$ supported on the orbit of $\M$. Equivalently, for any closed subgroup $H\leq S_\infty$ with no algebraicity, there exists a $S_\infty$-invariant probability measure on $S_\infty/H$. In another paper with Kwiatkowska \cite{AFKwP}, they moreover characterize the cardinality of the set of such measures. This description gives us a great understanding of essentially transitive p.m.p.\ actions of $S_\infty$. Regarding these works, a natural question is: 
\begin{question}[version for IREs]
    Given a closed subgroup $G\leq S_\infty$, is there a natural condition on an expansion $\mathbf N$ of the canonical structure $\M_G$ associated with a given closed subgroup $G\leq S_\infty$, such that there exists a $G$-IRE concentrated on the $G$-orbit of $\mathbf N$?
\end{question}

\addtocounter{theorem}{-1}

\begin{question}[version for p.m.p.\ actions]
    Given a closed subgroup $G\leq S_\infty$, is there a natural condition on a closed subgroup $H\leq G$ such that there exists $G$-invariant probability measure on $G/H$?
\end{question}

This question has been answered in some special cases in \cite{Ackerman} and \cite{AFNP}.
The following observations may help to answer this question, however they also suggest to us that this problem might be difficult.

\begin{enumerate}[label=\arabic*)]
    \item For any closed $G\leq S_\infty$ and any $S_\infty$-IRE $\mu$ in a language $\LL$ (disjoint from $\LL_G$), one readily gets a $G$-IRE in the language $\LL\sqcup\LL_G$. Indeed, take $\M$ a random structure with law $\mu$ and define a $G$-IRE $\mathbf N$ by

    \[R^{\mathbf{N}}(\x)\Leftrightarrow \left\{ \begin{array}{c}
        R^{\mathbf{M}}(\x)  \text{ whenever } R\in \LL. \\
         R^{\mathbf{M}_G}(\x)  \text{ whenever } R\in \LL_G. 
    \end{array}\right.\]

    \item There are groups that admit IREs not produced as in 1). This is the case for the IRE of the automorphism group of the Fraïssé limit of 2-graphs considered in Example \ref{Example.ExofIRE} \ref{item.2graph}.

    \item There are expansions which orbits can not be the support of an IRE. We give two examples.
    \begin{enumerate}[label=\roman*)]
        \item[i)] Any expansion of the generic poset into a linear order. The existence of such an IRE would contradict the non-amenability of the automorphism groups of the generic poset.
        \item[ii)] The expansion $\mathbf N$ of $(\Q,<)$ given by adding a unary relation $R$ and for a fixed irrational $\alpha$, setting for all $q\in \Q$, $R^{\mathbf{N}}(q)$ if and only if $q<\alpha$. If the orbit of this expansion was the support of an ergodic $\Aut(\Q,<)$-IRE, we would get a non product $\Aut(\Q,<)$-invariant ergodic measure on $\{0,1\}^\Q$ which does not exist by \cite{JT}.
    \end{enumerate}
\end{enumerate}

 To conclude, let us note that essentially free actions of $S_\infty$ have been analyzed in depth from a model theoretic perspective in \cite{AFKP} through a notion that is called properly ergodic structures, but such a study for other groups is lacking.

\paragraph{On (strongly) dissociated p.m.p.\ actions.} Let $G\leq S_\infty$ be a closed subgroup. A p.m.p.\ action $G\curvearrowright(X,\mu)$ is \defin{strongly dissociated} if for all finite subsets $A,B\subseteq\N$, the $\sigma$-algebra $\FF_A$ and $\FF_B$ are independent conditioned on $\FF_{A\cap B}$. The first author and Tsankov proved if $G\leq S_\infty$ is a closed subgroup, which is oligomorphic, has no algebraicity and admits weak elimination of imaginaries, then any p.m.p.\ action of $G$ is strongly dissociated \cite[Thm.~3.4]{JT}. This nice characterization suggests the following question.

\begin{question}
    Is there a natural (model theoretic) condition classifying closed subgroups $G\leq S_\infty$ whose  p.m.p.\ ergodic actions are all (strongly) dissociated?
\end{question}

\begin{remark}
    In an earlier version of this paper, we introduced the name \emph{dynamically de Finetti} for closed subgroups $G\leq S_\infty$, which have no algebraicity and admit weak elimination of imaginaries and whose p.m.p.\ actions are all strongly dissociated. We removed this terminology in the present version of this paper and preferred to express our result using the classical notion of dissociation.
\end{remark}

\printbibliography
\Addresses
\end{document}